\newcommand{\checkxpos}[3][]{%
	\ifdim \zposx{#2}sp < 20000000sp%
	\mynote[#1]{#3}%
	\else%
	\note[#1]{#3}%
	\fi%
}
\newcommand{\mytodo}[2][]{%
	\zsaveposx{todo\the\todocount}%
	\checkxpos[#1]{todo\the\todocount}{#2}%
	\global\advance\todocount1\relax
}
\newcommand{\mynote}[2][]{{%
		\let\marginpar\marginnote
		\reversemarginpar
		\renewcommand{\baselinestretch}{0.8}%
		\todo[#1]{#2}}}
\newcommand{\note}[2][]{\renewcommand{\baselinestretch}{0.8}\todo[#1]{#2}}
\numberwithin{equation}{section}
\newtheorem{thm}{Theorem}[section]
\newtheorem{cor}[thm]{Corollary}
\newtheorem{prop}[thm]{Proposition}
\newtheorem{lem}[thm]{Lemma}
\theoremstyle{definition}
\newtheorem{definition}[thm]{Definition}
\newtheorem{rem}[thm]{Remark}
\theoremstyle{remark}
\newcommand{\im}{\mathbf{i}}
\newcommand{\dd}{\text{d}}
\newcommand{\ad}{\text{\textnormal{ad}}}
\newcommand{\Ad}{\text{\textnormal{Ad}}}
\newcommand{\g}{\mathfrak{g}}
\newcommand{\m}{\mathfrak{m}}
\newcommand{\E}{\mathcal{E}}
\newcommand{\Aut}{\text{{\sc Aut}}}
\newcommand{\Hom}{\text{{\sc Hom}}}
\begin{document}

\title{Absolute parallelism for $2$-nondegenerate CR structures via bigraded Tanaka prolongation}
\author
{Curtis Porter}
\address{Curtis Porter\\
Department of Mathematics\\
Duke University\\
Durham\\
NC \ 27710\\
USA}
\email{cwp19@math.duke.edu}

\author{Igor Zelenko}
\address{Igor Zelenko\\
		Department of Mathematics\\
	Texas A\&M University\\
	College Station\\
	Texas \ 77843\\
	USA}
	
\email{zelenko@math.tamu.edu}
\urladdr{\url{http://www.math.tamu.edu/~zelenko}} 
 
\thanks{ I.\ Zelenko was partly supported by NSF grant DMS-1406193 and Simons Foundation Collaboration Grant for Mathematicians 524213.}

\subjclass[2010]{32V05, 32V40, 53C10}
\keywords{$2$-nondegenerate CR structures, absolute parallelism, Tanaka symbol, Tanaka  universal algebraic prolongation}
\begin{abstract}
This article is devoted to the local geometry of everywhere $2$-nondegenerate CR manifolds $M$ of hypersurface type. An absolute parallelism for such structures was recently constructed independently by Isaev-Zaitsev, Medori-Spiro, and Pocchiola in the minimal possible dimension ($\dim M=5$), and for $\dim M=7$ in certain cases by the first author. In this paper, we develop a bigraded (i.e. $\mathbb Z\times \mathbb Z$-graded) analog of Tanaka's prolongation procedure to construct an absolute parallelism for these CR structures in arbitrary (odd) dimension with Levi kernel of arbitrary admissible dimension.  We introduce the notion of a bigraded Tanaka symbol -- a complex bigraded vector space -- containing all essential information about the CR structure. Under the additional regularity assumption that the symbol is a Lie algebra, we define a bigraded analog of the Tanaka universal algebraic prolongation, endowed with an anti-linear involution, and prove that for any CR structure with a given regular symbol there exists a canonical absolute parallelism on a bundle whose dimension is that of the bigraded universal algebraic prolongation. Moreover, we show that for each regular symbol there is a unique (up to local equivalence) such CR structure whose algebra of infinitesimal symmetries has maximal possible dimension, and the latter algebra is isomorphic to the real part of the bigraded universal algebraic prolongation of the symbol. In the case of $1$-dimensional Levi kernel we classify all regular symbols and calculate their bigraded universal algebraic prolongations.
In this case, the regular symbols can be subdivided into nilpotent, strongly non-nilpotent, and weakly non-nilpotent. The bigraded universal algebraic prolongation of strongly non-nilpotent regular symbols is isomorphic to the complex orthogonal algebra $\mathfrak{so}\left(m,\mathbb C\right)$, where $m=\tfrac{1}{2}(\dim M+5)$. Any real form of this algebra -- except   $\mathfrak{so}\left(m\right)$ and $\mathfrak{so}\left(m-1,1\right)$ -- corresponds to the real part of the bigraded universal algebraic prolongation of exactly one strongly non-nilpotent regular CR symbol. However, for a fixed $\dim M\geq 7$ the dimension of the bigraded universal algebraic prolongations of all possible regular CR symbols achieves its maximum on one of the nilpotent regular symbols, and this maximal dimension is $\tfrac{1}{4}(\dim M-1)^2+7$.
 \end{abstract}
\maketitle\markboth{Curtis Porter and Igor Zelenko}{On absolute parallelism for $2$-nondegenerate CR structures via bigraded Tanaka prolongation}

\tableofcontents


\section{Introduction}

\subsection{Preliminaries}
A CR structure on a manifold $M$ is a distribution $D\subset TM$ of even rank, together with an isomorphism field $J:D\rightarrow D$ which satisfies $J^2=-\mathrm{Id}$, such that the following integrability condition holds: if the complexification $\mathbb{C}D$ is split into the $\im$-eigenspace $H$ of $J$ and the $(-\im)$-eigenspace $\overline H$ ($\im=\sqrt{-1}$), then the distribution $H$ -- and therefore $\overline H$ -- is involutive. At their generic points, real, codimension $c$ submanifolds of $\mathbb C^{n+c}$ are endowed with the natural CR structure determined by rank $n$, complex subbundles of their tangent bundles. When $D$ has corank $1$, the CR structure is said to be of \emph{hypersurface type}. The Levi form is a $\mathbb{C}TM/\mathbb{C}D$-valued Hermitian form $\mathfrak{L}$ defined on $H$ by
$$\mathfrak {L}(X, Y)=\im[X,\overline{Y}]\mod \mathbb{C}D\quad  X, Y \in \Gamma(H).$$
The \emph{Levi kernel} $K\subset H$  consists of all vectors in $H$ which are degenerate for the Levi form. CR-structures with $K=0$ are called Levi-nondegenerate.

The equivalence problem for Levi-nondegenerate CR structures of hypersurface type is classical: E. Cartan solved it for hypersurfaces in $\mathbb C^2$ \cite{cartanCR}, then Tanaka \cite{tanakaCR} and Chern and Moser \cite{chernmoserCR} generalized the solution to hypersurfaces in $\mathbb{C}^{n+1}$ for $n\geq1$. This case is well understood in the general framework of parabolic geometries \cite{tanaka2, capshichl, capslovak}.
If the Levi form has signature $(p,q)$ for $p+q=n$, then the maximally symmetric model is obtained as a complex projectivization of the cone of nonzero vectors in $\mathbb C^{n+2}$ which are isotropic with respect to a Hermitian form of signature $(p+1, q+1)$, and the algebra of infinitesimal symmetries of this model is isomorphic to $\mathfrak {su}(p+1, q+1)$.

The notion of $k$-nondegeneracy is a natural generalization of Levi-nondegeneracy, so that Levi-non-degenerate CR structures are exactly $1$-nondegenerate in this terminology. Higher order nondegeneracy requires Levi-degeneracy, so we assume the fiber $K_x$ over each $x\in M$ is nontrivial. Moreover, we assume $\dim K_x>0$ is constant so that $K$ is a distribution. This allows for a convenient description of $2$-nondegeneracy as follows (see the monograph \cite[chapter 11]{BER99} for the more general definition). For $v\in K_x$ and $y\in\overline{H}_x/\overline{K}_x$, take local sections $V\in\Gamma(K)$ and $Y\in\Gamma(\overline{H})$ such that $V(x)=v$ and $y\equiv Y(x)\mod\mathbb{C}D$, and define a linear map\footnote{Though $v$ is not yet identified with an element of a Lie algebra, the operator $\ad_v$ is naturally induced by the adjoint action of the vector field $V$ on the Lie algebra of vector fields on $M$. The notation $\ad_v$ converges to its standard usage in \S\ref{symbolsec}.}
\begin{equation}\label{adv}
\begin{aligned}
 \ad_v&:\overline{H}_x/\overline{K}_x\to H_x/K_x,\\
 &\hspace{7mm}y\hspace{5mm}\mapsto [V,Y]|_x \,\, \mathrm{mod}\,\, K_x\oplus \overline {H}_x.
\end{aligned}
\end{equation}
One can similarly define a linear map $\ad_{v}:H_x/K_x\to \overline{H}_x/\overline{K}_x$ for $v\in \overline K_x$ (or simply take complex conjugates). A Levi-degenerate CR structure is \emph{$2$-nondegenerate at $x$} if $\ad_v\neq 0$ for $v\in K_x$ (or $v\in \overline{K}_x$) unless $v=0$. It is shown in \cite[Appendix]{kaupzaitsev} that the definition of $k$-nondegeneracy in \cite{BER99} is equivalent to ours when $K$ has constant rank (and for arbitrary $k$ under the analogous constant rank assumption for the fibers of a special filtration in $K$ called the \emph{Freeman sequence} \cite{freeman}).

Among hypersurface-type CR manifolds, the lowest dimension in which $2$-nondegeneracy can occur is $\dim M=5$. A natural candidate for the maximally symmetric model is given by a tube hypersurface in $\mathbb C^3$ over the future light cone in $\mathbb R^3$,
\begin{equation}
\label{tube}
M_0=\{(z_1, z_2, z_3)\in\mathbb C^3: (\mathrm{Re}\, z_1)^2+(\mathrm{Re}\, z_2)^2-(\mathrm{Re}\, z_3)^2=0, \mathrm{Re}\, z_3>0\}
\end{equation}
 and this model has been extensively studied in \cite{felskaup1, felskaup2, kaupzaitsev, merker}. In particular, its algebra of infinitesimal symmetries is isomorphic to $\mathfrak {so}(3,2)$. However, the structure of absolute parallelism in this situation was constructed only recently  and independently in the following three papers (preceded by the work \cite{ebenfelt} for a more restricted class of structures): by Isaev and Zaitsev \cite{isaev},  Medori and Spiro \cite{medori} and  Merker-Pocchiola \cite{pocchiola}. The question of existence of a functorially constructed absolute parallelism which is a Cartan connection in this case is rather subtle and is addressed in more detail in \cite{medori2},\cite[section 4.3]{Greg}, and \cite{Greg2}.

The only results about an absolute parallelism for $2$-nondegenerate, hypersurface-type CR structures of dimension higher than $5$ were obtained by the first author in \cite{porter} in the case of $7$-dimensional CR manifolds with $\text{rank}_\mathbb{C}K=1$, under certain additional algebraic assumptions that are automatic in $5$-dimensional case, and
in more recent work \cite{Greg} about structures modeled on a homogeneous space of a simple group, there called the ``maximally symmetric model." 

In this article we develop a unified framework for the construction of an absolute parallelism for $2$-nondegenerate, hypersurface-type CR structures on manifolds of arbitrary odd dimension $\geq5$ and with Levi kernel of arbitrary admissible dimension, under certain additional algebraic assumptions. These algebraic assumptions are automatic in dimension $5$, and in dimension $7$ they include all cases treated in \cite{porter}, along with an additional case only mentioned therein. The algebraic assumption in dimension $7$ also matches the homogeneous models with simple symmetry groups discussed by A. Santi \cite{santi}. In contrast to \cite{Greg}, we do not assume that our structures are modeled on ``maximally symmetric" homogeneous spaces of simple groups, and in fact many of our structures -- including the most symmetric one in any given class under consideration -- are modeled on homogeneous spaces whose symmetry groups are not even semisimple.

Our method is a modification of Tanaka's algebraic and geometric procedures from his 1970 paper \cite{tanaka1}.
Note that the method of Medori and Spiro is also inspired by Tanaka, modifying the techniques of his 1979 paper \cite{tanaka2} devoted to parabolic geometries in order to apply them to the non-parabolic one in  \cite{medori}. However, their construction seems specific to the concrete case under consideration, using the properties of the algebra $\mathfrak{so}(3,2)$ in particular, and the method does not appear to be easily extended to $2$-nondegenerate CR structures  in higher dimensions.

\subsection{Tanaka's algebraic and geometric prolongation procedures}
Our initial objective was to follow the scheme of Tanaka's 1970 paper \cite{tanaka1}, in which he developed a deep generalization of the theory of $G$-structures that is ideally adapted to nonholonomic structures. Let us briefly describe the main steps of the Tanaka constructions relevant to us here (for details see the original paper \cite{tanaka1} and  also \cite{ mori,aleks, zeltan}). Here and in the sequel, ``graded vector space" means $\mathbb Z$-graded.

 First, a distribution $D\subset TM$ generates a filtration in the tangent space at every point $x\in M$ by taking iterative Lie brackets of its sections. Under the regularity assumption that the dimensions of the corresponding subspaces in this filtration are independent of $x$, Tanaka observed  that after passing from this filtered structure to the corresponding graded object, one can  assign to the distribution at $x$ a negatively ($\mathbb Z_-$) graded nilpotent Lie algebra
$\displaystyle{\g_{-}(x)=\oplus_{i<0} \g_i(x)}$
called the \emph{Tanaka symbol of $D$ at $x$}, which contains the information about the principal parts of all commutators of vector fields taking values in $D$. Tanaka considered distributions with constant symbols or ``of constant type $\g_{-}$", when the graded Lie algebras $\g_{-}(x)$ are isomorphic for every $x\in M$ to a fixed, graded, nilpotent Lie algebra $\g_{-}$, generated by $\g_{-1}$. The \emph{flat distribution  $D (\g_-)$ of constant type $\g_-$} is the left-invariant distribution on the simply connected Lie group $G_-$ with Lie algebra $\g_-$ such that the fiber of $D$ at the identity is $\g_{-1}$.

Now let $\mathrm{Aut}(\g_-)$ be the group of automorphisms of the graded Lie algebra $\g_{-}$ and $\mathfrak{der}(\g_{-})$ the Lie algebra of $\mathrm{Aut}(\g_-)$, so that $\mathfrak{der}(\g_{-})$ is the algebra of all derivations of $\g_{-}$ which preserve the grading. Assigning weight $0$ to the space $\mathfrak{der}(\g_{-})$, the vector space $\g_{-}\oplus \mathfrak{der}(\g_{-})$ is naturally endowed with the structure of a graded Lie algebra. For a distribution of type $\g_{-}$ one can construct a principal $\mathrm{Aut}(\g_-)$-bundle $P^0(\g_{-})$ over $M$ whose fiber over $x$ consists of all graded Lie algebra isomorphisms $\g_{-}\to\g_{-}(x)$.

Additional structures on $D$ can be encoded in the choice of a subgroup $G_0\subset\mathrm{Aut}(\g_{-})$ with Lie algebra $\g_0$, leading to a $G_0$-reduction $P^0$ of the bundle $P^0(\g_-)$. The bundle $P^0$ is called the structure of type $\g_{-}\oplus \g_0$, or of Tanaka symbol $\g_{-}\oplus \g_0 $. One can define the \emph{flat structure of type $\g_{-}\oplus \g_0 $} as the $G_0$-reduction $P^0$ of the bundle $P^0(\g_-)\to G_-$ such that the fiber of $P^0$ over $x\in G_-$ consists of pullbacks of isomorphisms in the fiber over the identity of $G_-$ under the left translation mapping $x$ to the identity. Note that if $G^0$ denotes a Lie group with Lie algebra $\g_-\oplus\g_0$ such that $G_0\subset G^0$, then the flat structure of type $\g_-\oplus\g$ is at least locally equivalent to the structure of type $\g_-\oplus\g_0$ given by the bundle $G^0\rightarrow G^0/G_0$.

Next, Tanaka defines the \emph{universal algebraic prolongation}
\begin{equation}
\label{UTan0}
{\mathfrak U} (\g_{-}\oplus \g_0)=\g_{-}\oplus \g_0\oplus \bigoplus_{i>0}\g_i
\end{equation}
of $\g_{-}\oplus \g_0$,
which is the maximal (nondegenerate) graded Lie algebra containing $\g_{-}\oplus \g_0$ as its nonpositive part.  Nondegeneracy here means the adjoint action $\ad( y)|_{\g_{-}}$ is nontrivial for any nonzero, nonnegatively graded $ y\in {\mathfrak U} (\g_{-}\oplus \g_0)$. The prolongation procedure to construct canonical frames for structures of type $\g_{-}\oplus \g_0 $ can be described uniformly by the following

\begin{thm} [Tanaka, \cite{tanaka1}]
\label{Tanthm}
Assume that $\dim \,\mathfrak U(\g_-\oplus\g_0)<\infty$.
Then the following holds:
\begin{enumerate}
\item
To any Tanaka structure of type $\g_-\oplus \g_0$ one can assign
the canonical structure of absolute parallelism
on a bundle over $M$ of dimension $\dim \mathfrak U(\g_-\oplus\g_0)$;

\item
The algebra of infinitesimal symmetries of the flat structure of type $\g_-\oplus \g_0$ is isomorphic to $\mathfrak U(\g_-\oplus \g_0)$;

\item
The dimension of the algebra of infinitesimal symmetries of a Tanaka structure of type $\g_-\oplus \g_0$ is not greater than $\dim\,\mathfrak U(\g_-\oplus\g_0)$,
and any Tanaka structure of  type $\g_-\oplus \g_0$ whose algebra of infinitesimal symmetries has dimension $\dim\,\mathfrak U(\g_-\oplus\g_0)$ is locally equivalent to the flat structure of type $\g_-\oplus \g_0$.
\end{enumerate}
\end{thm}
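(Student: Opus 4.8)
The plan is to realize Tanaka's three assertions through a single inductive geometric prolongation that mirrors the algebraic prolongation $\mathfrak U(\g_-\oplus\g_0)$ term by term. Write $\mathfrak U=\bigoplus_i\g_i$ and, since $\dim\mathfrak U<\infty$, fix $N$ with $\g_i=0$ for $i>N$. Starting from the structure $P^0\to M$ of type $\g_-\oplus\g_0$, I would construct a tower of principal bundles $P^N\to\cdots\to P^1\to P^0\to M$ in which each $P^{k+1}\to P^k$ is a principal bundle with abelian structure group, namely the additive group of $\g_{k+1}$, for $k\geq 0$. Each $P^k$ carries a tautological partial coframe: a $\bigl(\g_-\oplus\g_0\oplus\cdots\oplus\g_k\bigr)$-valued $1$-form $\omega^k$ whose components of negative and zero degree are the soldering forms inherited from the filtration of $TM$ and from the reduction to $G_0$, and whose positive-degree components play the role of partial connection forms. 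Because $\g_i=0$ for $i>N$, the form $\omega^N$ has no remaining indeterminacy and becomes a genuine absolute parallelism on $P^N$, whose dimension is
\begin{equation}
\dim\g_-+\dim\g_0+\sum_{i\geq 1}\dim\g_i=\dim\mathfrak U(\g_-\oplus\g_0),
\end{equation}
yielding part (1).

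The inductive step is where the real work lies. Given $P^k$ with its partial coframe normalized through degree $k$, I would show that the admissible extensions of $\omega^k$ to degree $k+1$ form an affine space whose model vector space is read off from a Spencer-type cochain complex attached to $\g_-\oplus\g_0$, and that the freedom genuinely reducible by a further bundle reduction is precisely a copy of $\g_{k+1}$. The torsion of $\omega^k$ -- the structure function measuring failure of the flat structure equations -- is normalized by fixing, once and for all, a complement to the image of the coboundary operator $\partial$ in the relevant degree. The defining property of the universal algebraic prolongation is exactly that its degree-$(k+1)$ part $\g_{k+1}$ is the space of degree-$(k+1)$ derivation-type extensions, so this normalization cuts out the subbundle $P^{k+1}$ canonically, up to the chosen complement. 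I expect this verification -- that the geometric indeterminacy at each stage is canonically identified with $\g_{k+1}$ and that the normalized structure equations close up -- to be the principal obstacle, since it requires the full interplay between the bracket relations of $\mathfrak U$ and the differential identities satisfied by $\omega^k$.

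For part (2) I would run this prolongation on the flat structure of type $\g_-\oplus\g_0$. By homogeneity every $P^k$ is a homogeneous space, and the top $P^N$ is identified with a Lie group $\widehat G$ having Lie algebra $\mathfrak U(\g_-\oplus\g_0)$, the canonical absolute parallelism being its left-invariant Maurer--Cartan form. Infinitesimal symmetries of the structure lift bijectively to vector fields on $P^N$ preserving $\omega^N$; for the Maurer--Cartan form these are exactly the right-invariant fields, so the symmetry algebra is isomorphic to $\mathfrak U(\g_-\oplus\g_0)$.

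For part (3) I would invoke the general principle that the vector fields preserving a fixed absolute parallelism on a connected manifold form a Lie algebra of dimension at most that of the manifold: such a field is determined by its value at a single point, since preserving the coframe forces its components in the parallelism to be constant along its flow. Applying this to $(P^N,\omega^N)$ gives the bound $\dim\mathfrak U(\g_-\oplus\g_0)$. Equality forces the symmetry algebra to act locally transitively on $P^N$, which in turn forces the torsion and higher structure functions to be constant; comparing the resulting structure equations with those computed in part (2) shows these constants coincide with the flat model's, so the structure is locally equivalent to the flat structure of type $\g_-\oplus\g_0$.
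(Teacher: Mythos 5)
The paper does not actually prove Theorem \ref{Tanthm}: it is quoted from \cite{tanaka1}, with the construction only sketched in Remark \ref{Chainrem} and then adapted to the bigraded setting in Section \ref{geometric_prolongation}. Measured against that, your overall strategy -- a tower of affine bundles modeled on the $\g_i$, torsion normalized by a fixed complement to the image of the Spencer-type coboundary, identification of the residual fiber freedom at each stage with $\g_{k+1}$, Maurer--Cartan identification for the flat model, and the standard coframe-preserving bound for part (3) -- is exactly Tanaka's. Two steps, however, are asserted rather than argued, and the first is false as written. You claim that because $\g_i=0$ for $i>N$, the form $\omega^N$ ``has no remaining indeterminacy and becomes a genuine absolute parallelism on $P^N$.'' This fails whenever $\g_-$ has depth $\mu>1$ (the generic nonholonomic case): at stage $N$ the components of $\omega^N$ of degree $i$ close to $N$ are still only \emph{partial} coframes, defined on the filtrands $T^{j}P^{N}$ rather than on all of $TP^{N}$, so the collection is not yet a coframe. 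One must continue the prolongation for $\mu$ further steps -- each projection now a diffeomorphism, but each step still a normalization choice that extends the partial forms to true one-forms. This is precisely why Remark \ref{Chainrem} places the canonical frame on $P^{l+\mu-1}$ via the $e$-structure $P^{l+\mu}$, and why in \S\ref{geometric_prolongation} the soldering forms become true one-forms only ``as of $\kappa=l+2$'' (there $\mu=2$).

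The second gap is in part (3): the sentence ``comparing the resulting structure equations with those computed in part (2) shows these constants coincide with the flat model's'' is the entire difficulty, not a consequence of constancy. Maximal symmetry gives a filtered Lie algebra of coframe-preserving fields whose \emph{associated graded} is $\mathfrak U(\g_-\oplus\g_0)$; a priori this could be a nontrivial filtered deformation with constant but nonvanishing normalized torsion, and one must prove that the filtered algebra is isomorphic to its associated graded. In the paper's bigraded analog this is supplied by the existence of a grading element together with \cite[Lemma 3.3]{doubkom}; in Tanaka's general setting it requires its own argument. As written, your proposal omits this step entirely.
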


\begin{rem}
\label{Chainrem}
A few words about item (1) of the previous theorem:  If $\g_-$ has $\mu$ nonzero graded components
and the positively graded part of $\mathfrak U(\g_-\oplus\g_0)$ consists of $l$ nonzero graded components, then  Tanaka recursively constructs a sequence of bundles $\{P^i\}_{1\leq i\leq l+\mu}$,
 \begin{equation}
  \label{bundles}
  M\leftarrow P^0\leftarrow P^1\leftarrow P^2\leftarrow\cdots,
 \end{equation}
where for $i>0$, $P^i$ is a bundle over $P^{i-1}$ whose fibers are affine spaces with modeling vector space $\g_i$ from \eqref{UTan0}. We refer to the recursive procedure of construction of $P^i\to P^{i-1}$ as the \emph{$i$th geometric prolongation}.   Observe that all $P^i$ with $i\geq l$ are identified with each other by the bundle projections, which are diffeomorphisms in those cases. The bundle $P^{l+\mu}$ is an $e$-structure over $P^{l+\mu-1}$; i.e., $P^{l+\mu-1}$ is endowed with a canonical frame -- a structure of absolute parallelism. It is important to note that for any $0<i\leq l+\mu-1$ the recursive construction of the bundle $P^{i+1}$ over $P^i$ depends on a choice of normalization conditions. Algebraically, ``normalization condition" refers to a choice of vector space complement to the image of a certain Lie algebra cohomology differential. The word ``canonical" in item (1) of Theorem \ref{Tanthm} means that one can assign to any Tanaka structure of type $\g_-\oplus\g_0$ a unique absolute parallelism (also called a frame or $e$-structure) on the bundle $P^{l+\mu-1}$ by applying the same fixed normalization conditions at each step of the construction of \eqref{bundles}, so that two Tanaka structures of type $\g_-\oplus\g_0$ are equivalent up to a diffeomorphism of $M$ if and only if the absolute parallelisms assigned to them are equivalent up to a diffeomorphism of the corresponding bundles. We emphasize that this assignment depends on a choice of normalization conditions at each step of the geometric prolongation procedure, but once this choice is fixed, the equivalence problem for Tanaka structures  is reduced to the equivalence problem for $e$-structures. This assignment is in fact a functor from the category of Tanaka structures (whose morphisms are diffeomorphisms of the underlying manifold $M$) to the category of $e$-structures (whose morphisms are $e$-structure-preserving diffeomorphisms of the bundle  $P^{l+\mu}$).
\end{rem}

The main advantage of this approach compared to Cartan's original method of equivalence is that, independently of the choice of normalization conditions, the basic features of the prolongation procedure such as the dimension of the resulting bundles, at which step the canonical frame is achieved, and the algebra of infinitesimal symmetries of the maximally symmetric ``flat" model can be described purely algebraically in terms of ${\mathfrak U} (\g_{-}\oplus \g_0)$. Questions concerning the most natural normalization conditions -- or whether normalization conditions exist such that the resulting absolute parallelism is a $\mathfrak U(\g_-\oplus\g_0)$-valued Cartan connection -- are more subtle and are best understood in the framework of the parabolic geometries; i.e., when the nonnegatively graded part of $\mathfrak U(\g_-\oplus\g_0)$ is a parabolic subalgebra (\cite{tanaka2, capslovak}).

\subsection{Bigraded Tanaka prolongation and description of main results}
It is clear that the standard Tanaka approach will not work for Levi degenerate CR structures, because describing the $k$-nondegeneracy of a CR structure on the graded level requires the assignment of nonnegative degree to vectors in the
Levi kernel, while in the standard Tanaka theory the nonnegatively graded components of the universal prolongation algebra correspond to vertical vectors fields on the appropriate bundle. Thus, the analog of a Tanaka symbol for such structure is not immediately obvious.

An attempt to define an analog of the Tanaka symbol for $k$-nondegenerate CR-structure, called an \emph{abstract core}, was made by  A. Santi in \cite{santi}. This notion is very natural and was used there toward the description of homogeneous models, but neither a functorial notion analogous to the universal prolongation of a Tanaka symbol nor a relation of the abstract core to a construction of an absolute parallelism was given there.

In the present paper, in the case of $2$-nondegenerate, hypersurface-type CR structures, we first propose the analog of the Tanaka symbol (section \ref{symbolsec}). In contrast to the standard Tanaka theory, this symbol is not a Lie algebra in general. It is a graded and even bigraded complex vector space endowed with an anti-linear involution, and with bigrading-compatible Lie brackets defined for most pairs of bigraded components, except the pair corresponding to $K$ and $\overline{K}$. Here and in the sequel, ``bigraded vector spaces" mean $(\mathbb Z\times\mathbb Z)$-graded ones.

Then we restrict ourselves to the class of symbols which are Lie algebras, calling them regular symbols (see Remark \ref{natrem} below discussing the naturality of this restriction).
Regular symbols have the structure of bigraded Lie algebras, so that the bigrading-compatible Lie brackets are defined on the whole symbol. We also define the notion of the flat CR structure with given regular symbol.

Our symbols and Santi's abstract cores \cite{santi} in the considered situation are in fact equivalent: one notion is uniquely recoverable from the other one -- see Remark \ref{Santirem} below for more detail. However, the main novelty here is that for our regular symbols we define the analog of the Tanaka universal algebraic prolongation in a functorial way. This analog is the maximal, nondegenerate, complex, \emph{bigraded} Lie algebra, such that it contains the symbol as its part with nonnegative first weight and, in addition, the only possible non-vanishing bigraded components with first weight equal to $1$ have biweights $(1,-1)$  and $(1,1)$ (Definition \ref{universal}). This bigraded algebra is endowed  with an anti-linear involution (i.e., with a real form).
 The naturality of this notion is justified by our main theorem -- Theorem \ref{maintheor} -- on the existence of a canonical absolute parallelism for all CR structures with given regular $2$-nondegenerate CR symbol, which shows that \emph{the real part of the bigraded universal algebraic prolongation plays exactly the same role for our structure as Tanaka's universal prolongation for standard Tanaka structures}. In other words, for every CR structure with a given regular, $2$-nondegenerate CR symbol $\g^0$
a canonical absolute parallelism exists on a bundle of dimension equal to the (complex) dimension of the bigraded universal algebraic prolongation. Moreover, among such structures the flat structure is the only one -- up to local equivalence -- whose algebra of infinitesimal symmetries has the latter dimension, and the algebra of infinitesimal symmetries of the flat structure of type $\g^0$ is isomorphic to the real part of the bigraded universal Tanaka prolongation.

We emphasize that Theorem \ref{maintheor} establishes the existence of a canonical absolute parallelism following a choice of normalization condition in each step of the prolongation -- cf., Remark \ref{Chainrem} above -- but we do not specify which normalization conditions are preferable, and in particular we do not investigate if such conditions exist to determine a canonical Cartan connection. However, we believe that the framework developed here will be useful to address this question.

Further, we classify all regular $2$-nondegenerate symbols with $1$-dimensional Levi kernel in section \ref{classificationsec} (Theorems \ref{regclass} and \ref{regclass0}) and calculate their bigraded universal prolongations in section \ref{algebraic_prolongation} (Theorems \ref{regbuap} and \ref{regbuap0}).
It turns out in this case that if the linear operator $\ad_v$ is as in \eqref{adv}, then the CR symbol at a point $x\in M$ is regular if and only if an antilinear map $A: H_x/K_x \to H_x/K_x$ defined by
 \begin{equation}
 \label{antilinA}
 A (y)= \ad_v(\bar y)
 \end{equation}
satisfies
\begin{equation}
\label{alpha}
A^3=\alpha A, \quad \alpha\in \mathbb R.
\end{equation}
Note that, although $\alpha$ depends on the choice of a generator $v$ of $K_x$, its sign is independent of this choice. We subdivide the set of regular $2$-nondegenerate symbols with $1$-dimensional Levi kernel into
\emph{nilpotent regular} if $\alpha=0$ -- equivalently, $A^3=0$ -- and
\emph {non-nilpotent regular} otherwise; i.e., when $\alpha\neq 0$. The latter  type is subdivided into two subtypes:
\emph{strongly non-nilpotent regular} if $A$ is a bijection, and \emph{weakly non-nilpotent regular}
otherwise.

The bigraded universal algebraic prolongation of any strongly non-nilpotent regular symbol  is isomorphic to the complex orthogonal algebra $\mathfrak{so}\left(m,\mathbb C\right)$ with $m=\tfrac{1}{2}(\dim M+5)$. To describe the real parts of this prolongation  we say that the Hermitian form induced on $H/K$ by the Levi form is the \emph{reduced Levi form}. If  $\alpha$ in \eqref{alpha} is positive, there exists exactly one strongly non-nilpotent regular symbol whose signature of the reduced Levi form equal to $(p, q)$ -- $\dim M=2(p+q)+3$ -- and the real part of its bigraded universal prolongation is isomorphic to $\mathfrak{so}(p+2, q+2)$.
Such a symbol is said to be of type ${\mathrm I}_{p, q}$.
The case when $\alpha<0$ may occur if and only if $p=q$, and the real parts of their universal prolongations are  isomorphic to $\mathfrak{so}^*(2p+4)$. Such symbols are said to be of type $\mathrm{II}_p$.

To describe the unique (up to local diffeomorphism) maximally symmetric model among all $2$-non-degenerate, hypersurface-type CR structures with  strongly non-nilpotent regular CR symbol of type $\mathrm{I}_{p,q}$, we follow a procedure similar to that in \cite[Appendix, \S 6]{satake}. Consider a real-symmetric form $S$ of signature $(p+2, q+2)$ on $\mathbb R^{p+q+4}$. Complexifying, we extend $S$ to a form $S^\mathbb C$ on $\mathbb C^{p+q+4}$. To a complex line generated by $z=u+iv\in\mathbb C^{p+q+4}$  ($u,v\in \mathbb R^{p+q+4}$), one can associate a real plane $\text{span}\{u, v\}\subset\mathbb R^{p+q+4}$. This defines a map $F: \mathbb P^{p+q+3}(\mathbb C)\rightarrow \text{Gr}_2(\mathbb{R}^{p+q+4})$. The maximally symmetric model is given by the preimage under $F$ of $S$-isotropic planes in $\mathbb R^{p+q+4}$, which is a real hypersurface in the complex hypersurface of $S^\mathbb C$-isotropic lines in $\mathbb P^{p+q+3}(\mathbb C)$.  Using the same arguments as in \cite[section 2]{isaev} based on \cite{satake}, it can be shown that this model is locally equivalent to the tube hypersurface in $\mathbb C^{p+q+2}$ over the future light cone in $\mathbb R^{p+q+2}$, as in the model \eqref{tube}, but with signature $(p+1, q+1)$. 

To describe the unique (up to local diffeomorphism) maximally symmetric model among all $2$-non-degenerate, hypersurface-type CR structures with  strongly non-nilpotent regular CR symbol of type $\mathrm{II}_{p}$ take $\mathbb C^{2p+4}$ endowed with split-signature hermitian form $h$ and a symmetric form $S$ such that in some basis, the forms $h$ and $S$ are given by matrices
$$\begin{pmatrix} \mathbbm{1}_{p+2} & 0\\0 &-\mathbbm{1}_{p+2}\end{pmatrix} \textrm{ and } \begin{pmatrix} 0 & \mathbbm{1}_{p+2}\\\mathbbm{1}_{p+2}&0\end{pmatrix},$$
respectively. The maximally symmetric  model is given by the real hypersurface of $h$-isotropic lines in the complex hypersurface of $S$-isotropic lines in the projective space $\mathbb P^{2p+3}(\mathbb C)$ of $\mathbb C^{2p+4}$.

The bigraded universal algebraic prolongation of weakly non-nilpotent regular symbol is equal to the symbol itself. Finally, the bigraded universal algebraic prolongations of nilpotent regular symbols are not semisimple. An explicit description of these algebras will be given in a future work \cite{DPZ}, but we are obliged to mention an important strain within them. For $\dim M\geq 7$, the maximal dimension among the bigraded universal algebraic prolongations of regular symbols with $1$-dimensional Levi kernel is achieved by the prolongation of the nilpotent regular symbol such that, in some basis (Theorem \ref{regclass0}), the antilinear map $A$ defined by \eqref{antilinA} has a matrix with only one nonzero Jordan block of size $2$; this dimension is $\tfrac{1}{4}(\dim M-1)^2+7$.

For completeness, we offer without proof the (local) hypersurface realizations of the maximally symmetric models for these nilpotent symbols. If $n=\frac{1}{2}(\dim M-1)$ and the signature of the reduced Levi form is $(p, q)$, $p+q=n-1$, then in coordinates $(z_1,\ldots, z_n, w)$ for $\mathbb C^{n+1}$ these are the hypersurfaces given by the equation
\begin{equation}
\label{maxmod}
\mathrm{Im}(w+z_1^2 \bar z_n)=z_1 \bar z_2 +\bar z_1 z_2 +\sum_{i=3}^{n-1} \varepsilon_i z_i\bar z_i,
\end{equation}
where $\varepsilon_i \in\{-1, 1\}$ and $\{\varepsilon _i\}_{i=3}^{n-1}$ consists of $p-1$ terms equal to $1$ and $q-1$ terms equal to $-1$ (note that for $\dim M=7$ last sum in the right-hand side of \eqref{maxmod} disappears -- see \eqref{maxmod7} below). The model \eqref{maxmod} was kindly communicated to us by Boris Doubrov, and will feature in future joint work \cite{DPZ} along with realizations of maximally symmetric models for other nilpotent symbols. These realizations can be obtained using the relation between CR-structures and systems of partial differential equations via Segre varieties; see, for example, \cite{sukhov, merker}.

For some small dimensions, we compare in the following table the dimension of the bigraded universal prolongations of such nilpotent regular symbols with the corresponding dimensions in the case of strongly non-nilpotent regular symbols:
\vskip .2in

\centerline{\begin{tabular}{|c| c| c|}
	\hline
	$\dim M$ & $\dim \mathfrak U_{\textrm{bigrad}}(\g^0)$ & \textrm{maximal} $\dim \mathfrak U_{\textrm{bigrad}}(\g^0)$ \\
	~&for strongly non-nilpotent symbols & for nilpotent symbols\\
	\hline
	7&15&16\\
	\hline
	9&21&23\\
	\hline
	11&28&32\\
	\hline
\end{tabular}}
\vskip .2in

Moreover, we compare our results with the previously known results for $\dim M=5$ and $7$. For $\dim M=5$ there exists exactly one $2$-nondegenerate CR symbol. It automatically has a $1$-dimensional kernel, is strongly non-nilpotent regular, and it corresponds to $p=1$ and $q=0$, so that the real part of the bigraded universal algebraic prolongation is equal to $\mathfrak{so}(3,2)$ as expected from \cite{isaev,medori2, pocchiola}.

For $\dim M=7$  the bigraded universal algebraic prolongation of the strongly non-nilpotent regular symbol is isomorphic to $\mathfrak {so}(6, \mathbb C)\cong \mathfrak {sl}_4(\mathbb C)$. Here there are three different strongly non-nilpotent regular symbols: one in the case of reduced Levi form of signature $(2, 0)$, and two for signature $(1, 1)$.  In the first case  the real part of the bigraded universal algebraic prolongation is isomorphic to $\mathfrak {so}(4,2)\cong  \mathfrak {su}(2,2)$. In the second case it is isomorphic to either $\mathfrak {so}(3,3)\cong  \mathfrak {sl}_4(\mathbb R)$ or $\mathfrak {so}^*(6)\cong \mathfrak {su}(3,1)$. Note that in \cite{porter}, an absolute parallelism was constructed for two of these three cases, corresponding to $\mathfrak {so}(4,2)\cong  \mathfrak {su}(2,2)$ and $\mathfrak {so}^*(6)\cong \mathfrak {su}(3,1)$. Also in $\dim M=7$ there are two weakly non-nilpotent regular symbols with $1$-dimensional kernel -- one for each signature -- and exactly one regular nilpotent symbol. As noted in the table above, the bigraded universal algebraic prolongation of the latter has dimension $16$, compared to $15$ in the strongly non-nilpotent regular case, and the local hypersurface realization of the maximally symmetric model is given in coordinates $(z_1,z_2, z_3, w)$ of $\mathbb C^{4}$ by the equation
\begin{equation}
\label{maxmod7}
\mathrm{Im}(w+z_1^2 \bar z_3)=z_1 \bar z_2 +\bar z_1 z_2, 
\end{equation}
a particular case of \eqref{maxmod} with $n=3$. Finally, for $\dim M=7$ there are no $2$-nondegenerate CR structures with $2$-dimensional Levi kernel.

\begin{rem}
\label{natrem}
 Several words about the importance of CR structures with regular symbols: starting with $\dim M=7$, the space of non-equivalent symbols of $2$-nondegenerate, hypersurface-type CR structures with $1$-dimensional Levi kernel is large and contains continuous parameters (moduli). In fact, the classification of such symbols is equivalent to the classification of pairs consisting of a real line $\ell$ of nondegenerate Hermitian forms of arbitrary signature and a complex line $A$ of self-adjoint, anti-linear operators in a complex vector space of dimension $\frac{1}{2}(\dim M-3)$, up to a change of a basis in this space (see Proposition \ref{reduction_of_equivalence}).
This classification was recently carried out in \cite {SZ20} in terms of a special canonical form containing continuous parameters. Hence, the regular CR symbols classified in the present paper represent a very small subset of all symbols. In fact, they form only a finite subset in the space of all CR symbols with given $\dim M\geq 7$, which itself depends on continuous parameters. However, \cite{SZ1} shows that for \emph{generic} nonregular symbols of $2$-nondegenerate, hypersurface-type, pseudo-convex CR structures with $1$-dimensional Levi kernel, there is no CR structure with this symbol admitting a transitive group of symmetries, and the same is true (\cite{Sykes}, in preparation) in dimensions 7 and 9 without assuming pseudoconvexity. Moreover (\cite {SZ2}), the hypersurfaces given by \eqref{maxmod} would locally -- up to equivalence -- describe the maximally symmetric homogeneous models among 2-nondegenerate, hypersurface-type CR structures of a given dimension $\geq 7$ whose Levi kernel is $1$-dimensional (without assuming regularity of the symbol).
\end{rem}

\noindent{\bf Acknowledgements:} We are grateful to Andrea Santi for pointing out several gaps in the earlier version of the article, to Boris Doubrov for valuable discussions, especially concerning the construction of hypersurface realizations of maximally symmetric models, and to David Sykes for carefully reading the paper and offering helpful comments. 

\section{CR symbol for $2$-nondegenerate CR structures of hypersurface type}
\label{symbolsec}
As mentioned in the Introduction, we consider a $2$-nondegenerate, hypersurface-type CR structure $(D, J)$ on $M$.
For any real vector bundle $E\to M$, $\mathbb{C}E$ denotes the complexified vector bundle whose fiber over $x\in M$ is $\mathbb{C}E_x=E_x\otimes_\mathbb{R}\mathbb{C}=E_x\oplus_{\mathbb{R}}\im E_x$. Recall that $\mathbb{C}D_x=H_x\oplus\overline{H}_x$ splits into $\pm\im$-eigenspaces for $J_x$ and that the Levi kernel is a complex
subbundle $K\subset H$ with conjugate $\overline{K}\subset \overline H$. Define
\begin{align}\label{g-}
&\g_{-1}(x)=\mathbb{C}D_x/(K_x\oplus \overline{K}_x),
&\g_{-2}(x)=\mathbb{C}T_xM/ \mathbb{C}D_x.
\end{align}
Similar to the Levi form on the holomorphic subbundle $H\subset\mathbb{C}TM$, one can define a $\mathbb{C}T_xM/\mathbb{C}D_x$-valued alternating form $\omega$ on $\g_{-1}(x)$: for $y_1,y_2 \in \g_{-1}$, let $Y_i\in\Gamma(\mathbb C D)$ be such that $Y_i(x)=y_i\, \mathrm{mod}\, K\oplus \overline K$, ($i=1,2$), and set
$$\omega(y_1, y_2)=[Y_1, Y_2](x) \quad \mathrm{mod}\, \g_{-1}(x).$$

Henceforth in this section we suppress the subscript $x$ or argument $(x)$ when referring to pointwise structures like the fibers of bundles or components of the symbol algebra. This is no loss of specificity, as we will restrict our attention to CR structures of constant type. The complex vector space
\begin{equation}
\label{grading}
\mathfrak \g_{-}=\g_{-1}\oplus\g_{-2}
\end{equation}
is endowed with the natural structure of a graded Lie algebra with the only nontrivial brackets coming from the form $\omega$. This algebra is isomorphic to the Heisenberg algebra. Its algebra of derivations
$\mathfrak {der}(\g_-)$ is isomorphic to the complex conformal symplectic algebra $\mathfrak{csp}(\g_{-1},\omega)$. Let
\begin{align*}
&\g_{-1, 1}=H/K,
&\g_{-1, -1}=\overline H/\overline K,
&&\g_{-2,0}=\g_{-2}.
\end{align*}
We say that a bigrading is compatible with the grading of a vector space if the $i^{\text{th}}$ component of the grading is the direct some of all bigraded components whose first weight is $i$. Bearing in mind the involutivity of $H$ and $\overline{H}$, we see that $\g_{-}$ is a bigraded Lie algebra with respect to the bigraded splitting
\begin{equation}
\label{bigrading}
\g_-=\g_{-1, -1}\oplus\g_{-1, 1}\oplus \g_{-2, 0},
\end{equation}
which is compatible with the grading \eqref{grading}. Also recall that $\g_-$ is endowed with the anti-linear involution given by the complex conjugation in $\mathbb{C}TM$. By construction, it satisfies
\begin{equation}
\label{conj}
\overline {\g_{i, j}}=\g_{i,-j}.
\end{equation}

For $v\in K$, take $\ad_{v}:\overline{H}/\overline{K}\to H/K$ to be as in \eqref{adv}. Extending  $\ad_v$  trivially to $H/K$, we obtain a derivation $\ad_v\in\mathfrak {der}(\g_-)$. Here, as before, $\mathfrak {der}(\g_-)$ is the Lie algebra of derivations of $\g_-$ preserving the grading \eqref{grading}, but not necessarily the bigrading \eqref{bigrading}.
Referring to the collection of such $\ad_v$ operators as $\ad_K$, we identify $\ad_K$ with a complex subspace in $\mathfrak {der}(\g_-)$, which we denote by $\g_{0,2}$. Taking the complex conjugate, we define $\ad_{\overline K}$ and similarly identify it with a subspace in $\mathfrak{der}(\g_-)$, denoted $\g_{0,-2}$. Recall that the vector space  $\g_-\oplus \mathfrak{der}(\g_-)$ is naturally endowed  with the structure of Lie algebra, namely,  the \emph{semi-direct sum} of $\mathfrak g_-$ and $\mathfrak{der}(\g_-)$. 
By construction,
\begin{equation}
\label{g0pm2}
\begin{aligned}
&[\g_{0,2}, \g_{-1,-1}]\subset \g_{-1,1}, &[\g_{0,2}, \g_{-1,1}]= 0, \\
&[\g_{0,-2}, \g_{-1,1}]\subset \g_{-1,-1}, &[\g_{0,-2}, \g_{-1,-1}]=0.
\end{aligned}
\end{equation}
Finally, let $\g_{0,0}$ be the maximal  subalgebra of $\mathfrak {der}(\g_-)$ such that
\begin{eqnarray}
~&\label{g001} [\g_{0,0}, \g_{-1, \pm 1}]\subset \g_{-1, \pm 1}\\
~&\label{g002}[\g_{0,0}, \g_{0, \pm 2}]\subset \g_{0, \pm 2}.
\end{eqnarray}
Equivalently, $\g_{0,0}\subset\mathfrak {der}(\g_-)$ is the maximal subalgebra of derivations preserving the bigrading \eqref{bigrading} and satisfying \eqref{g002}. Collecting all bracket relations between the bigraded components defined so far, we have
\begin{equation}
\label{bigradedcond}
[\g_{i_1, j_1},\g_{i_2, j_2}]\subset \g_{i_1+i_2, j_1+j_2}, \quad i_1, i_2\leq 0,\,\, \{(i_1, j_1), (i_2, j_2)\}\neq \{(0,2), (0, -2)\}.
\end{equation}

Complex conjugation of $y\in\g_-$ induces conjugation on $f\in\mathfrak{der}(\g_-)$ via
\begin{equation}
\label{conjext}
\overline f (y)=\overline{ f(\overline y)}.
\end{equation}
In this way, \eqref{conj} extends to $\g_{0, j}$, with $j=0,\pm 2$. Thus, the anti-linear involution given by
the operator of complex conjugation is defined on the space
\begin{equation}
\label{CRsymbol}
\g^0=\g_{-2,0}\oplus\g_{-1, -1}\oplus \g_{-1,1}\oplus\g_{0,-2}\oplus\g_{0,0}\oplus \g_{0,2}.
\end{equation}

\begin{definition}
\label{CRsymboldef}
The vector subspace $\g^0$  of  $\g_-\oplus \mathfrak{der}(\g_{-})$, having bigrading as in \eqref{CRsymbol} and endowed with the anti-linear involution $\bar{~}$
is called the \emph{symbol of the CR structure $(D, J)$ at $x$}, or more briefly, the \emph{CR symbol}.
\end{definition}
Collecting all properties that we used in the previous definition, we get the following, natural
\begin{definition}
\label{absCRsymboldef}
Let $\g_-=\g_{-1}\oplus\g_{-2}$ be the complex graded Heisenberg algebra.
A vector space $\g^0=\g_-\oplus \g_0$ with  $\g_0\subset
 \mathfrak{der}(\g_{-})$ that has a bigrading like \eqref{CRsymbol} compatible with the grading $\g_-\oplus \g_0$ is called an \emph{abstract symbol} of $2$-nondegenerate, hypersurface-type CR structure if it satisfies \eqref{bigradedcond}, $\g_{0,0}$ is the maximal subalgebra of $\mathfrak{der}(\g_{-})$ satisfying \eqref{g001} and \eqref{g002}, and it is endowed with the anti-linear involution $\bar{~}$ satisfying \eqref{conj}.
\end{definition}

\begin{definition}
\label{constant}
Let $\g^0$ be an abstract $2$-nondegenerate CR symbol.
We say that a $2$-nondegenerate CR structure of hypersurface type has \emph{constant CR symbol $\g^0$} if its CR symbols at all points are isomorphic to $\g^0$.
\end{definition}

In contrast to the standard Tanaka symbol, an  abstract $2$-nondegenerate symbol is not a Lie algebra in general, because the brackets $[\g_{0, -2}, \g_{0,2}]$ may not belong to it. Indeed, for $\dim \g_-\geq 5$ (corresponding to $\dim M\geq 7$) the regular, abstract $2$-nondegenerate CR symbols  comprise a finite subset in the space of all $2$-nondegenerate CR symbols with given $\dim \g_-$, which itself depends on continuous parameters; see Remark \ref{natrem} above, section \ref{classificationsec} below for the classification of regular, abstract $2$-nondegenerate CR symbols, and \cite{SZ1} for the classification of all abstract  $2$-nondegenerate CR symbols.

\begin{definition}
\label{regsymb}
An abstract $2$-nondegenerate CR symbol $\g^0$ is called \emph{regular} if it is a Lie subalgebra of $\g_{-}\oplus\mathfrak{der}(\g_{-})$, which is equivalent to the condition
\begin{equation}
\label{regcond}
[\g_{0,-2}, \g_{0,2}]\subset \g_{0,0}.
\end{equation}
\end{definition}

By construction of $\g_{0,\pm 2}$ it follows that $ [[\g_{0,-2}, \g_{0,2}], \g_{-1, \pm 1}]\subset \g_{-1, \pm 1}$. Hence, by \eqref{g001} -\eqref{g002} the abstract $2$-nondegenerate CR symbol $\g^0$ is \emph{regular} if and only if
\begin{equation}
\label{regcond1} [[\g_{0,-2}, \g_{0,2}], \g_{0, \pm 2}]\subset \g_{0, \pm 2}.
\end{equation}
From this it  is easy to see that the \emph{symbol of the CR structure $(D, J)$ at $x$}  is regular if at only if  the following  relation:
\begin{equation}
\label{vvv}
\ad_{v_1}\circ \ad_{\bar v_2}\circ\ad_{v_3}+\ad_{v_3}\circ \ad_{\bar v_2}\circ\ad_{v_1} \in \mathrm {span}\{\ad_v|v\in K_x\}, \quad \forall v_1,v_2, v_3\in K_x
\end{equation}
Condition \eqref{regcond} endows $\g^0$ with the structure of  the \emph{bigraded Lie algebra},
because the rest of the bigrading conditions follow by construction.

\begin{rem}
\label{g00rem}
The component $\g_{0,0}$ in the definition of $2$-nondegenerate CR symbols can be recovered from the other components. However, we prefer to include it in the definition for brevity, and because it simplifies the notion of algebraic prolongation in the sequel.
\end{rem}

\begin{rem}
\label{Santirem}
In the situation considered here, the abstract core defined by A. Santi in \cite{santi} is in our notation the real part of the vector space
\begin{equation}
\label{m}
\mathfrak m=\g_{-2,0}\oplus\g_{-1, -1}\oplus \g_{-1,1}\oplus\g_{0,-2}\oplus \g_{0,2}
\end{equation}
(i.e. consisting of all components of $\g^0$ except $\g_{0,0}$),
together with:
\begin{itemize}
\item the operator $J$ defined on real parts of $\g_{-1}$ and $\g_{0, -2}\oplus\g_{0,2}$ and satisfying $J^2=-\mathrm{Id}$,
\item the structure of the Heisenberg algebra on $\g_-$, and
\item the identification of $\g_{0, -2}$ and $\g_{0,2}$ with elements of $\mathfrak{der}(\g_-)$ satisfying \eqref{g0pm2},
\end{itemize}
although Santi does not  explicitly use the structure of a bigrading. Clearly this data is equivalent to that encoded in our symbol. Note that the real part of the symbol does not have any natural bigrading.
\end{rem}

Finally, for a regular $2$-nondegenerate CR symbol $\g^0$ -- by analogy with the standard Tanaka theory -- one can define the notion of the flat structure of type $\g^0$. Let $\Re G^0$ and $\Re G_{0,0}$ be the simply connected Lie groups corresponding to the real parts of the Lie algebras $\g^0$ and $\g_{0,0}$. Let $M_0^\mathbb C= G^0/ G_{0,0}$ and $M_0= \Re G^0/\Re G_{0,0}$, both considered left cosets. For every pair $(i,j)$ used so far, let $\widehat D_{i,j}^0$  be the left-invariant distribution on $G^0$ that coincides with $\g_{i,j}$ at the identity. Since all $\g_{i,j}$ are invariant under the adjoint action of $G^{0,0}$, the push-forward of each $\widehat D_{i,j}^0$ to $M_0^\mathbb C$ is a well-defined distribution denoted by $D^0_{i,j}$. Let $D_{-1}^0$ be the distribution which is the sum of $D_{i,j}^0$ with $i=-1$. Restrict all of these distributions to $M_0$, considering them as subbundles of the complexified tangent bundle of $M_0$. The real part of $D_{-1}^0\oplus D_{0,-2}^0\oplus D_{0,2}^0$ defines the corank-1 
distribution $\Re D^0$ on $M_0$, as well as the operator $J$ on the fibers of $\Re D^0$ such that $D_{-1,1}^0\oplus D_{0,2}^0$ and 
$D_{-1, -1}^0\oplus D_{0, -2}^0$ are the $\im$ and $(-\im)$-eigenspaces of $J$, respectively. From the properties of the bigraded components of $\g^0$ it follows that this structure is a $2$-nondegenerate CR structure of hypersurface type with CR symbol $\g^0$.

\begin{definition}
\label{flatdef}
The CR structure on $\Re G^0/\Re G_{0,0}$ constructed above is called the \emph{flat CR structure of type $\g^0$}.
\end{definition}

\section{Universal algebraic prolongation of regular symbols and existence of absolute parallelism}
\label{universalsec}

In this section we fix a regular, abstract, $2$-nondegenerate CR symbol $\g^0$.
The bigraded structure of $\g^0$ is crucial for defining the correct universal algebraic prolongation so that it is suitable to the geometric prolongation procedure for the constructing the absolute parallelism.

\begin{definition}
\label{universal}
The \emph{bigraded universal algebraic prolongation of the symbol $\g^0$} is the maximal (nondegenerate) bigraded Lie algebra
$\mathfrak U_{\text{bigrad}}(\g^0)$ of the form
\begin{equation}
\label{bguniv}
\mathfrak U_{\text{bigrad}}(\g^0)=
\g^0\oplus\g_{1,-1}\oplus\g_{1,1}\oplus\bigoplus_{i\geq 2, j\in\mathbb Z}\g_{i,j},
\end{equation}
where, as in the standard Tanaka theory, nondegeneracy means that $\ad(y)|_{\g_{-}}\neq 0$ for any nonzero $y\in\mathfrak{U} _{\text{bigraded}}$ with a nonnegative first weight. The space  $\g_i:=\bigoplus_{j\in\mathbb Z}\g_{i,j}$ is called the \emph{$i^{\text{th}}$ bigraded prolongation  of the symbol $\g^0$}.
\end{definition}

Let us compare this prolongation to the standard Tanaka universal algebraic prolongation ${\mathfrak U} (\g^0)$ of $\g^0=\g_{-}\oplus\g_{0}$ with respect to the first weight (where $\g_0=\bigoplus_{j\in\{0,\pm 2\}} \g_{0,j}$). In the process, we will demonstrate the uniqueness of the bigraded prolongation, which is not evident \textit{a priori}. Let

\begin{equation}
\label{UTan}
{\mathfrak U} (\g^0)=\g_{-}\oplus \g_0\oplus \bigoplus_{i>0}\widetilde{\g}_i
\end{equation}
We use $\widetilde{~}$ for the positively graded components to distinguish from the notation that will be used below for the bigraded Tanaka prolongation.

The spaces $\widetilde \g_i$ have an explicit realization, which we now describe for $\widetilde \g_1$. Recall that an endomorphism $f$ of a graded vector space has degree $k\in \mathbb Z$ if the image of a vector of weight $i$ is a vector of weight $i+k$; we write $\deg f=k$ in this case.
Similarly, we say that an endomorphism $f$ of a bigraded space has bidegree $(k, l)$ if the image of a vector of biweight $(i, j)$ has biweight $(i+k, j+l)$. Then
\begin{equation}
\label{firstalgTanaka}
\widetilde \g_1=\{f\in \mathrm{End}(\g^0)\ |\ \deg f=1,\ f ([v_1,v_2])=[f (v_1),v_2]+[v_1, f( v_2)]\,\,\forall\, v_1, v_2 \in \g_{-}\}.
\end{equation}
Observe that $f|_{\g_0}=0$ for $f\in \g_1$ as $\deg f=1$, so $f$ can be considered an element of $\mathrm{Hom} (\mathfrak g_-, \g^0)$ and one can define brackets between $f_1\in\widetilde \g_0$ and $f_2\in \mathfrak \g_1$ in the following natural way,
\begin{equation}
\label{posbr}
[f_1,f_2](v) = [f_1(v), f_2]+[f_1,f_2(v)]\quad \forall v\in\mathfrak \g_{-}.
\end{equation}

The bigrading \eqref{CRsymbol} on $\g^0$ induces a bigrading on $\widetilde \g_k$ with components of biweight $(k,l)$ consisting of the endomorphisms of $\g^0$ of  bidegree $(k, l)$. Based on the biweights of $\g^0$, the only nonzero bigraded components of $\widetilde \g_1$ have biweights $(1, \pm 1)$, $(1, \pm 3)$; i.e.,
\begin{equation}
\label{Tang1bigrade}
\widetilde \g_1=\widetilde \g_{1, -3}\oplus \widetilde \g_{1, -1}\oplus\widetilde \g_{1, 1}\oplus\widetilde \g_{1, 3}.
\end{equation}
From the maximality condition for the Tanaka universal  prolongation it follows that $\g_{1, -1}$\ and $\g_{1, -1}$ can be realized as subspaces if $\widetilde \g_1$. The bigrading condition on the prolongation \eqref{bguniv} requires $[\g_{1, -1}, \g_{0, -2}]\subset \g_{1, -3}=0$, as well as $[\g_{1, 1}, \g_{0, 2}]=0$. This implies that $\g_{1,\pm 1}$ must satisfy
\begin{eqnarray}
\label{firstalgbigrad1}
 ~& \g_{1,-1}=\{f\in \widetilde \g_{1, -1} \ |\  [f, \g_{0, -2}]=0, \bigl[[f, \g_{0,2}], \g_{0,2}\bigr]=0\}, \\
 ~&
 \label{firstalgbigrad2}
 \g_{1,1}=\{f\in \widetilde \g_{1, 1}\ |\  [f, \g_{0, 2}]=0, \bigl[[f, \g_{0,-2}], \g_{0,-2}\bigr]=0\}.
\end{eqnarray}
Since these form systems of linear equations, the first bigraded prolongation $\g_1$ is uniquely determined.

The first bigraded prolongation $\g_1=\g_{1,-1}\oplus\g_{1,1}$ can be significantly smaller than  $\widetilde \g_1$. For example, in the case $\dim M=5$, $\g_-$ is a free truncated Lie algebra and $\g_0=\mathfrak{der}(\g_-)\cong\mathfrak {gl}(\g_{-1})$, so $\widetilde \g_1$ can be identified with $\mathrm{Hom}(\g_{-1},\g_{0})$. Because $\dim \g_{-1}=2$, $\dim  \widetilde \g_1=8$. On the other hand, from section \ref{TypeI} it will follow that $\dim \g_1=2$ in this case.

In contrast to the first bigraded prolongation $\g_1$, the $i^{\text{th}}$ bigraded prolongation $\g_i$ of $\mathfrak U_{\text{bigrad}}(\g^0)$ with $i\geq 2$ is exactly the same as that of the standard Tanaka universal prolongation of the space $\g_-\oplus \g_0 \oplus \g_1$ (by the latter we mean the maximal nondegenerate graded Lie algebra such that its part corresponding to weights not greater than $1$ is equal to  $\g_-\oplus \g_0 \oplus \g_1$). Recursively, one has
\begin{equation}
\label{arbalgTanaka}
\g_i=\left\{\left.f\in \mathrm{End}\left(\mathfrak \bigoplus _{k\leq i-1}\g_k\right)\  \right|\ \deg f=i, f ([v_1,v_2])=[f (v_1),v_2]+[v_1, f( v_2)]\,\,\forall\, v_1, v_2 \in \g_{-}\right\},
\end{equation}
and the second weight is assigned in the obvious way. The fact that we prolong $\g_-\oplus \g_0 \oplus \g_1$ and not $\g_-\oplus \g_0 \oplus \widetilde \g_1$ implies that $\g_i$ is in general a subspace of $\widetilde \g_i$. Also, \eqref{arbalgTanaka} is a system of linear equations, so the $i^{\text{th}}$ bigraded Tanaka prolongation is uniquely defined.

Since $\g_1$ is a subspace of endomorphisms acting on a space endowed with an anti-linear involution satisfying \eqref{conj}, an anti-linear involution satisfying \eqref{conj} is induced  on $\g_1$ via the formula \eqref{conjext}. This involution can be extended recursively to the whole algebra $\mathfrak U_{\text{bigrad}}(\g^0)$ using the formula \eqref{conjext}. Let $\Re \mathfrak U_{\text{bigrad}}(\g^0)$ be the real part of $\mathfrak U_{\text{bigrad}}(\g^0)$ with respect to this involution.

The following theorem is the main result of this paper:

\begin{thm}
\label{maintheor}
Assume that $\g^0$ is a regular symbol such that $\dim \mathfrak U_{\text{bigrad}}(\g^0)<\infty$.
\begin{enumerate}
\item
To any $2$-nondegenerate, hypersurface-type CR structure with symbol $\g^0$ one can assign
the canonical\footnote{That is, canonical in the sense of Remark \ref{Chainrem}.} structure of absolute parallelism on a bundle over $M$ of (real) dimension equal to $\dim_{\mathbb C } \mathfrak U_{\text{bigrad}}(\g^0)$. In particular,  two  $2$-nondegenerate, hypersurface-type CR structures with symbol $\g^0$  are equivalent up to a diffeomorphism of $M$ if and only if the structures of absolute parallelism assigned to them are equivalent up to a diffeomorphism of the corresponding bundles.

\item
The algebra of infinitesimal symmetries of the flat CR structure of type $\g^0$ is isomorphic to the real part $\Re \mathfrak U_{\text{bigrad}}(\g^0)$ of $\mathfrak U_{\text{bigrad}}(\g^0)$;

\item
The dimension of the algebra of infinitesimal symmetries of a $2$-nondegenerate, hypersurface type CR structure with regular symbol $\g^0$ is not greater than $\dim_{\mathbb C}\,\mathfrak U_{\text{bigrad}}(\g^0)$,
and any CR structure of type $\g^0$ whose algebra of infinitesimal symmetries has dimension $\dim\,\mathfrak U_{\text{bigrad}}(\g^0)$ is locally equivalent to the flat CR structure of type $\g^0$.
\end{enumerate}
\end{thm}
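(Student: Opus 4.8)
The plan is to adapt Tanaka's geometric prolongation (Theorem \ref{Tanthm} and Remark \ref{Chainrem}) to the bigraded, real-structured setting, with the one essential departure that the degree-zero components $\g_{0,\pm2}$ -- which encode the operators $\ad_K,\ad_{\overline K}$ and hence the $2$-nondegeneracy -- be treated as \emph{horizontal} data carried by the tautological form, rather than as vertical, structure-group directions. Concretely, I would first build the initial bundle $P^0\to M$ whose fiber over $x$ consists of all bigraded isomorphisms from the model space $\mathfrak m$ of \eqref{m} onto the associated graded of the CR filtration at $x$ that intertwine the Heisenberg bracket $\omega$, match the model's $\g_{0,2}$ with the geometric family $\ad_{K_x}$ (and its conjugate), and commute with complex conjugation. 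By Definition \ref{absCRsymboldef} and the constancy of the symbol, this is a principal bundle with structure group $\Re G_{0,0}$ (Lie algebra $\Re\g_{0,0}$), and it carries a canonical $\Re\mathfrak m$-valued tautological $1$-form. Crucially, because only $\g_{0,0}$ acts as the structure algebra, the fiber dimension at level $0$ is $\dim_\mathbb C\g_{0,0}$, while the ``horizontal'' components $\g_{0,\pm2}$ sit inside the soldering form.

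Next I would run the recursive construction of Remark \ref{Chainrem}: build $P^{i+1}\to P^i$ as an affine bundle modeled on the real part $\Re\g_i$ of the $i^{\text{th}}$ bigraded prolongation, the fiber parametrizing admissible extensions of the partial coframe to the next weight, and at each stage normalize the structure functions by fixing a conjugation-invariant complement to the image of the Lie-algebra-cohomology coboundary $\partial$ (cf. Remark \ref{Chainrem}). Two points need checking against the algebraic side. First, the \emph{first} prolongation step must reproduce $\g_1=\g_{1,-1}\oplus\g_{1,1}$ and not the larger standard Tanaka space $\widetilde\g_1$ of \eqref{Tang1bigrade}; I would show that the bigrading-compatibility forced by the involutivity of $H,\overline H$ and by \eqref{g0pm2} imposes on the candidate extensions exactly the linear constraints \eqref{firstalgbigrad1}--\eqref{firstalgbigrad2}, so that the geometric freedom at this step is $\Re\g_1$. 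Second, for $i\ge 2$ the prolongation \eqref{arbalgTanaka} coincides with the ordinary Tanaka prolongation of $\g_-\oplus\g_0\oplus\g_1$, so these steps are governed verbatim by the classical machinery of \cite{tanaka1}. Since $\dim\mathfrak U_{\text{bigrad}}(\g^0)<\infty$, the positive part has finitely many nonzero components; after the last one the projections become diffeomorphisms and the final bundle carries a canonical $\Re\mathfrak U_{\text{bigrad}}(\g^0)$-valued coframe. Its total real dimension equals $\dim_\mathbb C\mathfrak U_{\text{bigrad}}(\g^0)$, because the real form has real dimension equal to the complex dimension of $\mathfrak U_{\text{bigrad}}(\g^0)$. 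This yields (1), the canonicity being precisely the statement that the same fixed, conjugation-invariant normalizations are used for every structure of type $\g^0$.

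For (2) I would compute the infinitesimal symmetries of the flat model of Definition \ref{flatdef} directly. Pulling the construction back to $\Re G^0/\Re G_{0,0}$, the canonical coframe is the left-invariant Maurer--Cartan-type frame, and a vector field preserving it corresponds through the coframe to a parallel section; the usual integration argument identifies the space of such fields with $\Re\mathfrak U_{\text{bigrad}}(\g^0)$, the nonpositive part coming from the $\Re G^0$-action and the positive part from the prolongation directions, with the involution \eqref{conj} guaranteeing that we land in the real part. For (3), canonicity from (1) ensures that every symmetry of a CR structure of type $\g^0$ lifts to an automorphism of the associated $e$-structure on the final bundle; since the symmetry algebra of an absolute parallelism on an $N$-manifold has dimension at most $N=\dim_\mathbb C\mathfrak U_{\text{bigrad}}(\g^0)$, the bound follows. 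When equality holds the lifted symmetry algebra acts locally transitively, forcing all structure functions to be constant and equal to those of the flat model; comparing coframes then gives local equivalence to the flat CR structure, exactly as in the proof of the analogous item of Theorem \ref{Tanthm}.

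The main obstacle I expect is the first of the two checks above: verifying that the geometric normalization at weight $1$ cuts the naive Tanaka freedom $\widetilde\g_1$ down to precisely $\g_1$, equivalently that the constraints coming from CR-integrability and from the bigrading conditions on $\g^0$ are the geometric counterpart of the purely algebraic conditions \eqref{firstalgbigrad1}--\eqref{firstalgbigrad2}. Interwoven with this is the bookkeeping needed to carry the anti-linear involution \eqref{conj} through every level -- one must check at each step that the image of $\partial$ is conjugation-invariant, so that a real complement can be chosen, ensuring that both the coframe and the resulting symmetry algebra are genuinely real. Once the weight-$1$ step is reconciled with \eqref{firstalgbigrad1}--\eqref{firstalgbigrad2}, all higher steps reduce to standard Tanaka theory and the remainder of the argument parallels the proof of Theorem \ref{Tanthm}.
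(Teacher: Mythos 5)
Your overall strategy coincides with the paper's: the same bigraded frame bundle $P^0$ with structure group $\Re G_{0,0}$ and the components $\g_{0,\pm2}$ carried horizontally by the soldering forms rather than vertically by the structure group, the same normalization-by-complement-to-the-coboundary recursion, a flat-model Maurer--Cartan/Frobenius computation for part (2), and the filtration-plus-grading-element argument for part (3). The higher steps $i\geq2$ do reduce to classical Tanaka theory exactly as you say, and your treatment of the anti-linear involution is consistent with the paper's.

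The gap is at the point you yourself flag as the main obstacle, and the mechanism you propose for closing it is not the one that works. You suggest that ``bigrading-compatibility forced by the involutivity of $H,\overline H$ and by \eqref{g0pm2}'' imposes on the weight-$1$ extensions exactly the constraints \eqref{firstalgbigrad1}--\eqref{firstalgbigrad2}. Involutivity together with the torsion normalization does give the first condition in each of \eqref{firstalgbigrad1}--\eqref{firstalgbigrad2} (namely $[f,\g_{0,\mp2}]=0$ for $f\in\widetilde\g_{1,\mp1}$, via the variation of the extra torsion components $\tau_\pm$) and forces $f|_{\g_0}=0$. But the second condition, $\bigl[[f,\g_{0,\pm2}],\g_{0,\pm2}\bigr]=0$ --- equivalently $[\g_{0,\pm2},\mathfrak u_{1,\mp1}]\subset\mathfrak u_{1,\pm1}$ --- is not a pointwise consequence of involutivity or of the bigrading of the candidate extension. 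The paper obtains it by noting that $\ker\partial_1$ depends only on the symbol, passing to the flat model where the canonical lifts $L^{0,\pm2}$ of $K,\overline{K}$ are left-invariant and $\theta_0$ coincides with the Maurer--Cartan form, and then combining the equivariance identity of Corollary \ref{T1_brax_add} with the Jacobi identity and the nondegeneracy of the standard Tanaka prolongation $\mathfrak U(\g^0)$. Without this (or an equivalent) argument the first geometric prolongation is only known to be modeled on $\mathfrak u_{1,-1}\oplus\mathfrak u_{1,1}\supseteq\g_1$, and the dimension count in part (1) fails. A second, smaller inaccuracy: the intermediate bundle $\hat{P}^1$ of weight-$1$ extensions is \emph{not} an affine bundle over $P^0$ --- its fibers are modeled on the strictly larger space $\hat{\g}_1$, which contains bidegree-$(0,\mp2)$ maps $\g_{0,\pm2}\to\g_{0,0}$ arising from the freedom of choosing a complement to $T^{0,0}P^0$ inside $T^{0,\pm2}P^0$; only after normalization does $P^1$ become affine modeled on $\g_1$, and it is precisely this normalization that singles out the canonical lift of the Levi kernel needed in the argument above.
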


The proof of this theorem constitutes section \ref{geometric_prolongation}. We emphasize again that the construction of the canonical absolute parallelism in item (1) of Theroem \ref{maintheor} depends on a choice of normalization conditions at each step of the geometric prolongation procedure: roughly speaking, a choice of vector space complement to the image of an appropriate Lie algebra cohomology differential. The exact definition of a normalization condition for the first geometric prolongation is given in Definition \ref{N1_P1_def} (formula \eqref{firsdirect}) and for the higher geometric prolongations in formula \eqref{kdirect} below.

We classify all regular, abstract $2$-nondegenerate CR symbols with $1$-dimensional Levi kernel in section \ref{classificationsec} (Theorems \ref{regclass} and \ref{regclass0}) and calculate the real parts of their bigraded universal algebraic prolongations in section \ref{algebraic_prolongation} (Theorems \ref{regbuap} and \ref{regbuap0}), so that Theorem \ref{maintheor} can be formulated more explicitly in this case. In particular, in this case all bigraded universal algebraic prolongations are finite dimensional.

\section{Classification of Regular $2$-nondegenerate CR Symbols with $\dim_{\mathbb{C}}\g_{0,\pm2}=1$.}
\label{classificationsec}

 In this section we restrict our attention to the case of $1$-dimensional Levi kernel. Let $\g^0$ be a regular, abstract, $2$-nondegenerate CR symbol with $1$-dimensional Levi kernel. Then one can assign to it a pair: a \emph{canonical real line of nondegenerate  Hermitian forms and a canonical complex line of anti-linear operators on the space $\g_{-1,1}$}; see Remark \ref{natrem} above.

A  generator $\ell$ of the canonical real line of Hermitian forms on $\g_{-1,1}$ is defined as follows: First, fix a basis vector $e_0\in\im\Re \g_{-2,0}$. Then $\ell$ is defined via
\begin{equation}
\label{hermit}
[y_1,\overline{y}_2]=\ell(y_1,y_2) e_0, \quad y_1,y_2\in \g_{-1,1}.
\end{equation}
Re-scaling $e_0\in \im\Re\g_{-2,0}$ effects re-scaling of $\ell$.

A generator $A$ of the canonical complex line of anti-linear operators is defined by choosing a basis vector $f\in\g_{0,2}$ and setting
\begin{equation}
\label{anti-linear}
A y=[f,  \overline y], \quad y\in \g_{-1,1}.
\end{equation}
Re-scaling $f\in \g_{0,2}$ effects re-scaling of $A$. Moreover, it is easy to show (\cite[\S 2.3]{porter}) that $A$ is \emph{self-adjoint} with respect to $\ell$, i.e.
\begin{align}\label{A_symm}
\ell(Ay_1,y_2) =\ell( Ay_2,y_1), \quad y_1,y_2\in \g_{-1,1}.
\end{align}

Equivalence relations are naturally defined on the space $\mathfrak C$ of abstract, $2$-nondegenerate CR symbols with $1$-dimensional Levi kernel and the space $\mathfrak P$ of pairs consisting of a real line of nondegenerate Hermitian forms and a complex line of anti-linear, self-adjoint operators on $\g_{-1,1}$. Two CR symbols are equivalent if there exists a linear isomorphism between them preserving all their algebraic structures. Regarding $\mathfrak P$, the standard action of the group $\mathrm{GL}(\g_{-1,1})$ induces an action -- and therefore an equivalence relation -- on it as well. Using Remark \ref{g00rem} and the fact that all nontrivial Lie brackets on the space $\mathfrak m$ defined by \eqref{m} are given via the form $\ell$ and the operator $A$, the following is immediate.

\begin{prop}
\label{reduction_of_equivalence}
Two abstract $2$-nondegenerate CR symbols $\g^0$ and $\widetilde \g^0$ with $1$-dimensional Levi kernels are equivalent if and only if the corresponding pairs $(\ell, A)$ and $(\widetilde \ell, \widetilde A)$ from $\mathfrak P$ are equivalent.
\end{prop}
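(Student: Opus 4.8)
The plan is to reduce the equivalence of symbols to the equivalence of the pairs $(\ell,A)$ by invoking Remark \ref{g00rem}. Since $\g_{0,0}$ is the maximal subalgebra of $\mathfrak{der}(\g_-)$ satisfying \eqref{g001}--\eqref{g002}, it is uniquely recoverable from the remaining bigraded components, so an isomorphism of abstract symbols is the same datum as a bigrading- and involution-preserving linear isomorphism of the spaces $\mathfrak m$ from \eqref{m} that respects the Heisenberg bracket on $\g_-$ and the actions of $\g_{0,\pm2}$. As the Levi kernel is one-dimensional and every nontrivial bracket internal to $\mathfrak m$ is recorded by \eqref{hermit} and \eqref{anti-linear}, the whole structure on $\mathfrak m$ is carried by the single pair $(\ell,A)$, and the proposition becomes a translation between the two languages.

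For the forward implication I would take an isomorphism $\varphi\colon\g^0\to\widetilde\g^0$ of symbols; preservation of the bigrading makes $\Phi:=\varphi|_{\g_{-1,1}}$ a complex-linear isomorphism onto $\widetilde\g_{-1,1}$ and sends the one-dimensional spaces $\g_{-2,0},\g_{0,2}$ onto $\widetilde\g_{-2,0},\widetilde\g_{0,2}$. Writing $\varphi(e_0)=\mu\,\widetilde e_0$ and $\varphi(f)=\lambda\,\widetilde f$ on these generators---with $\mu\in\mathbb R^*$ forced by involution-invariance (here $\overline{e_0}=-e_0$) and $\lambda\in\mathbb C^*$---I would apply $\varphi$ to \eqref{hermit} and \eqref{anti-linear}. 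Using that $\varphi$ is a bracket- and conjugation-preserving homomorphism, this gives $\widetilde\ell(\Phi y_1,\Phi y_2)=\mu\,\ell(y_1,y_2)$ and $\Phi A=\lambda\,\widetilde A\,\Phi$, which say exactly that $\Phi$ carries the real line $\mathbb R\ell$ to $\mathbb R\widetilde\ell$ and the complex line $\mathbb C A$ to $\mathbb C\widetilde A$; i.e.\ $(\ell,A)$ and $(\widetilde\ell,\widetilde A)$ are equivalent in $\mathfrak P$.

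For the converse I would start from a complex-linear isomorphism $\Phi\colon\g_{-1,1}\to\widetilde\g_{-1,1}$ realizing the equivalence in $\mathfrak P$, so $\Phi^*\widetilde\ell=\mu\,\ell$ and $\Phi A\Phi^{-1}=\lambda\,\widetilde A$ for some $\mu\in\mathbb R^*,\lambda\in\mathbb C^*$, and build $\varphi$ on $\mathfrak m$ componentwise: $\varphi|_{\g_{-1,1}}=\Phi$, $\varphi(\overline y)=\overline{\Phi y}$ on $\g_{-1,-1}$, $\varphi(e_0)=\mu\,\widetilde e_0$ on $\g_{-2,0}$, $\varphi(f)=\lambda\,\widetilde f$ on $\g_{0,2}$, and $\varphi(\overline f)=\overline{\lambda\,\widetilde f}$ on $\g_{0,-2}$, extended $\mathbb C$-linearly. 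By construction $\varphi$ commutes with conjugation, and running the computation above in reverse---together with the vanishing brackets \eqref{g0pm2}---shows that $\varphi$ preserves every bracket internal to $\mathfrak m$. Finally I would extend $\varphi$ across $\g_{0,0}$ by conjugation, $g\mapsto\varphi\circ g\circ\varphi^{-1}$: this is a graded isomorphism $\mathfrak{der}(\g_-)\to\mathfrak{der}(\widetilde\g_-)$ intertwining $\g_{0,\pm2}$ with $\widetilde\g_{0,\pm2}$, hence it must carry the maximal subalgebra $\g_{0,0}$ satisfying \eqref{g001}--\eqref{g002} onto $\widetilde\g_{0,0}$.

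I expect the only step requiring care to be this last extension to $\g_{0,0}$: one has to verify that conjugation by $\varphi|_{\g_-}$ respects the defining maximality condition, which is precisely where the intrinsic characterization of $\g_{0,0}$ from Remark \ref{g00rem} enters; the rest is bookkeeping with the scalars $\mu,\lambda$ and with the anti-linear involution.
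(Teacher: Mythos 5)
Your proposal is correct and follows essentially the same route as the paper, which declares the proposition immediate from Remark \ref{g00rem} (recoverability of $\g_{0,0}$ by maximality) together with the observation that all nontrivial brackets on $\mathfrak m$ are encoded by $\ell$ and $A$ via \eqref{hermit} and \eqref{anti-linear}. You have simply written out the bookkeeping with the scalars $\mu\in\mathbb R^*$, $\lambda\in\mathbb C^*$ and the extension of $\varphi$ to $\g_{0,0}$ by conjugation, all of which checks out.
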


Without the assumption of sign-definiteness of $\ell$, normal forms are still known for pairs $(\langle\ell\rangle_{\mathbb R}, \langle A^2\rangle_{\mathbb R})$
(\cite[Theorem 5.1.1]{gohberg}). Let us call the latter pair the \emph{reduced  pair} of the original one. Note that the equivalence of pairs in $\mathfrak P$ obviously implies the equivalence of the corresponding reduced pairs, but not vice versa. 
Normal forms from the recent paper 
 \cite {SZ20} show that
the space of equivalence classes of $2$-nondegenerate CR symbols is very rich and contains moduli. 
In this paper we are interested in regular symbols only, and this space is small and discrete (see Remark \ref{natrem} above discussing why regular symbols are particularly important). The following lemma is important toward the classification of all regular  $2$-nondegenerate CR symbols:

\begin{lem}
\label{regclasslem}
Assume that an abstract $2$-nondegenerate CR symbol $\g^0$  with $1$-dimensional Levi kernel is  described by a pair  $(\langle\ell\rangle_{\mathbb R}, \langle A\rangle_{\mathbb C})\in\mathfrak P$  and
\begin{equation}
\label{kerim}
Z=\ker\, A, \quad \quad W= \mathrm{Im}\,A
\end{equation}

If  the CR symbol $\g^0$ is regular, then
\begin{equation}
\label{kerim0}
 A^3=\alpha A.
\end{equation}
for $\alpha\in\mathbb{R}$.
Moreover, if $\alpha\neq 0$, then
\begin{eqnarray}
&~& \g_{-1, 1}=Z\oplus W, \label{split1-1}\\
&~& A^2|_W=\alpha I|_W, \label{kerim00}
\end{eqnarray}
and the restrictions of the Hermitian form $\ell$ to $W$ and $Z$ are nondegenerate.
\end{lem}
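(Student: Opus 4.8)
The plan is to reduce the statement to linear-algebraic facts about the single anti-linear operator $A$ on $\g_{-1,1}$ defined in \eqref{anti-linear}, using the characterization of regularity already recorded in \eqref{vvv} (equivalently \eqref{regcond1}). Since $\dim_{\mathbb C}\g_{0,2}=1$, fix a generator $f\in\g_{0,2}$ and let $h=[\overline f,f]$ span $[\g_{0,-2},\g_{0,2}]$. First I would compute the action of $h$ on $\g_{-1,1}$: using \eqref{g0pm2} and the definition of $A$, a short bracket computation gives $h|_{\g_{-1,1}}=-A^2$, and by conjugation an analogous description of $h|_{\g_{-1,-1}}$ in terms of $\overline{A^2}$. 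The regularity condition \eqref{regcond1} then reads $[h,f]\in\g_{0,2}=\langle f\rangle_{\mathbb C}$; expanding $[[h,f],\overline y]$ via the Jacobi identity and re-expressing each term through $A$ shows that the operator attached to $[h,f]$ is a nonzero multiple of $A^3$. Hence regularity is equivalent to $A^3=\beta A$ for some $\beta\in\mathbb C$, which is exactly \eqref{kerim0} once reality of $\beta$ is established.

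Reality is the first genuinely delicate point, and here anti-linearity is decisive. Composing the identity $A^3=\beta A$ with $A$ on the right gives $A^4=\beta A^2$, whereas composing on the left and using $A(\beta w)=\overline\beta Aw$ gives $A^4=\overline\beta A^2$. Subtracting yields $(\beta-\overline\beta)A^2=0$, so $\beta\in\mathbb R$ whenever $A^2\neq 0$; the degenerate case $A^2=0$ forces $A^3=0$ and we simply set $\alpha=0$. This proves \eqref{kerim0} with $\alpha=\beta\in\mathbb R$.

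Assume now $\alpha\neq 0$. Then \eqref{kerim0} gives $A^4=\alpha A^2$, so the complex-linear operator $A^2$ annihilates the polynomial $t(t-\alpha)$; as $\alpha\neq0$ this polynomial has simple roots and $A^2$ is diagonalizable with eigenvalues in $\{0,\alpha\}$, producing a splitting $\g_{-1,1}=E_0\oplus E_\alpha$ into eigenspaces. I would then identify these eigenspaces with $Z$ and $W$ from \eqref{kerim}: the inclusion $\ker A\subseteq\ker A^2$ is trivial, and conversely $A^2y=0$ gives $\alpha Ay=A^3y=A(A^2y)=0$, so $Ay=0$; hence $Z=\ker A=\ker A^2=E_0$. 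For $W$, any $Ay$ satisfies $A^2(Ay)=A^3y=\alpha Ay$, so $\mathrm{Im}\,A\subseteq E_\alpha$, while for $z\in E_\alpha$ the vector $\tfrac1\alpha Az$ maps to $z$ under $A$ (using $\alpha\in\mathbb R$), giving $E_\alpha\subseteq\mathrm{Im}\,A$; thus $W=\mathrm{Im}\,A=E_\alpha$. This yields \eqref{split1-1} and, on $W=E_\alpha$, the relation $A^2|_W=\alpha I$, which is \eqref{kerim00}.

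Finally, for the nondegeneracy of $\ell|_Z$ and $\ell|_W$ I would first record that self-adjointness of the anti-linear $A$ in the sense of \eqref{A_symm} implies ordinary self-adjointness of the linear operator $A^2$ with respect to $\ell$; the verification is a direct two-line computation with the Hermitian symmetry of $\ell$ and uses no definiteness. Orthogonality of eigenspaces for distinct eigenvalues then follows in the usual way: for $y\in E_0$, $z\in E_\alpha$ one gets $0=\ell(A^2y,z)=\ell(y,A^2z)=\alpha\,\ell(y,z)$ (here $\alpha\in\mathbb R$ is used), so $\ell(Z,W)=0$. Since $\g_{-1,1}=Z\oplus W$ and $\ell$ is nondegenerate, any $y\in Z$ with $\ell(y,Z)=0$ also satisfies $\ell(y,W)=0$, whence $\ell(y,\g_{-1,1})=0$ and $y=0$; the same argument applies to $W$, so both restrictions are nondegenerate. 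The main obstacle throughout is the careful bookkeeping of conjugations forced by the anti-linearity of $A$ — in particular the left/right composition asymmetry that delivers the reality of $\alpha$ — rather than the subsequent, essentially formal, eigenspace manipulations.
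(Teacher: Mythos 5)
Your proposal is correct. The derivation of $A^3=\alpha A$ is essentially the paper's: both arguments use the one-dimensionality of $\g_{0,\pm2}$ together with the fact that $f=\ad_v$ squares to zero to reduce the regularity condition to the statement that $f\circ\overline f\circ f$ is proportional to $f$, i.e.\ that $A^3$ is proportional to $A$. Where you diverge is in the three follow-up steps, and in each case your route holds up. For the reality of $\alpha$ the paper merely cites the self-adjointness \eqref{A_symm}, whereas your comparison of $A\circ A^3$ with $A^3\circ A$, which yields $(\alpha-\overline\alpha)A^2=0$ from anti-linearity alone, is a genuinely different and arguably cleaner mechanism (with the degenerate case $A^2=0$ correctly absorbed into $\alpha=0$). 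For the splitting, the paper argues directly that $Z\cap W=0$ and invokes rank--nullity, while you diagonalize the linear operator $A^2$ using the separable polynomial $t(t-\alpha)$ and then identify its eigenspaces with $Z$ and $W$; this costs slightly more work but buys the $\ell$-orthogonality $Z\perp W$ essentially for free via the $\ell$-self-adjointness of $A^2$. Finally, the paper's written proof stops at \eqref{kerim00} and never addresses the nondegeneracy of $\ell|_Z$ and $\ell|_W$ asserted in the lemma (and needed later, e.g.\ for the orthonormal bases in Theorem \ref{regclass}); your orthogonal-complement argument supplies exactly that missing justification.
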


\begin{proof} Relation \eqref{kerim0} is in fact a direct consequence of \eqref{vvv} and \eqref{anti-linear} in the case of one-dimensional Levi kernel. We give here a detailed proof for completeness, as the proof of \eqref{vvv} was omitted. 
Fix $f\in\g_{0, 2}$ so that $\overline  f\in \g_{0,-2}$. Both $f$, $\bar f$ belong to $\mathfrak{der}(\g_-)\subset\mathfrak{gl}(\g_{-1})$, and their Lie brackets are defined by the commutator in $\mathfrak{gl}(\g_{-1})$. Hence, the regularity condition \eqref{regcond} reads $f\circ \overline f-\overline f\circ f\in\g_{0,0}$, which by the definition of $\g_{0,0}$ implies
\begin{align}
\label{comm3}
(f\circ \overline f - \overline f\circ f)\circ f-f\circ(f\circ \overline f-\overline f\circ f)=\alpha_0 f,
\end{align}
for some constant $\alpha_0$. Recall that the construction of $\g_{0,\pm 2}$ requires $f^2=0$. Therefore, the left-hand side of \eqref{comm3} reduces to $2f\circ \overline f\circ f$, whence
 \begin{align}
\label{comm31}
f\circ \overline f\circ f=\alpha f
\end{align}
for $\alpha$ ($=\tfrac{\alpha_0}{2}$).
Restricting the last identity to $\g_{-1,-1}$ and using \eqref{anti-linear} we get that
\begin{align}
\label{comm32}
A^3=\alpha A.
\end{align}

Further , let $\alpha\neq 0$. and subspaces $Z$ and $W$ are as in \eqref{kerim}. Let us show that $Z\cap W=0$. Indeed, if $y\in Z\cap W$, then $Ay=0$  and there exists $w\in \g_{-1,1}$ such that $y=A w$. Hence, $0=A^2y=A^3 w=\alpha Aw=\alpha y$,
which together with the assumption that $\alpha\neq 0$ implies that $y=0$. Thus, $Z\cap W=0$ and we have \eqref{split1-1}.
Further  $A|_{W}$ is a bijection, so restricting \eqref{comm32} to $W$ we get \eqref{kerim00}. Finally, from \eqref{kerim00} and the fact that $A^2$ is a self-adjoint operator it follows that $\alpha$ is real.  
\end{proof}

\begin{definition}
\label{strongreg}
Assume that a regular abstract $2$-nondegenerate CR symbol $\g^0$  with $1$-dimensional Levi kernel is described by a pair  $(\langle\ell\rangle_{\mathbb R}, \langle A\rangle_{\mathbb C})\in\mathfrak P$. Then $\g^0$ is called \emph{nilpotent regular} if the coefficient $\alpha$ in \eqref{kerim0} is equal to zero or, equivalently, $A^3=0$ and
\emph {non-nilpotent regular} otherwise, i.e. when $\alpha\neq 0$. A non-nilpotent regular symbol is called
\emph{strongly non-nilpotent regular}, if $A$ is a bijection, and \emph{weakly non-nilpotent regular}
otherwise.
\end{definition}

The classification of all regular CR symbols is given by the following two theorems, describing the cases of non-nilpotent and nilpotent regular symbols separately:

\begin{thm}
\label{regclass}
Consider a non-nilpotent regular, abstract $2$-nondegenerate CR symbol $\g^0$ described by a pair $(\langle\ell\rangle_{\mathbb R}, \langle A\rangle_{\mathbb C})$. Assume
that $\ell$ has signature $(p,q)$ with $p\geq q$, and $(p_1, q_1)$ is the signature of the restriction of $\ell$ to $W= \mathrm{Im}\,A$ ($p_1\leq p, q_1\leq q$). Re-scaling if necessary, we can assume that $A^2|_W=\mathbbm 1_W$ or $A^2|_W=-\mathbbm 1_W$, depending on the sign of $\alpha$ in \eqref{kerim0}.  Then the following two statements hold:
\begin{enumerate}
\item If $A^2|_W=\mathbbm 1_W$, then there are bases of $W$ and $Z$ ($=W^\perp$) such that $\ell|_W$, $\ell|_Z$, $A|_W$, and $A|_Z$ are represented by the matrices
  \begin{eqnarray}
    \label{matrixnormalform1}
    ~&\ell|_W=\begin{pmatrix} \mathbbm 1_{p_1} &0\\0& -\mathbbm 1_{q_1}\end{pmatrix},& \ell|_Z=\begin{pmatrix} \mathbbm 1_{p-p_1} &0\\0&-\mathbbm 1_{q-q_1}\end{pmatrix},\\
    ~& A|_W=\mathbbm 1_W,& A|_Z=0.\nonumber
  \end{eqnarray}
  In particular, a regular symbol with $A^2|_W=\mathbbm 1|_W$ is uniquely -- up to isomorphism -- determined  by the signature  $(p,q)$ of $\ell$, and the signature $(p_1, q_1)$ of the restriction of $\ell$ to $W=\mathrm{Im} A$.

\item  If $A^2|_W=-\mathbbm 1|_W$, then $q_1=p_1$  and
there are bases of $W$ and $Z$ ($=W^\perp$) such that $\ell|_W$, $\ell|_Z$, $A|_W$, and $A|_Z$ are represented by the matrices
 \begin{eqnarray}
    \label{matrixnormalform2}
    ~&\ell|_W=\begin{pmatrix} \mathbbm 1_{p_1} &0\\0& -\mathbbm 1_{p_1}\end{pmatrix},& \ell|_Z=\begin{pmatrix} \mathbbm 1_{p-p_1} &0\\0&-\mathbbm 1_{q-p_1}\end{pmatrix},\\
    ~& A|_W=\begin{pmatrix}0&-\mathbbm 1_{p_1}\\ \mathbbm 1_{p_1}&0\end{pmatrix},& A|_Z=0.\nonumber
  \end{eqnarray}
  Thus, a regular symbol with $A^2|_W=-\mathbbm 1|_W$ exists only if $\dim W$ is even, the signature of the restriction of $\ell$ to $W$ is $(p_1, p_1)$, and in this case it is uniquely -- up to isomorphism -- determined by the signature $(p,q)$ of $\ell$, $p\geq q\geq p_1$.
\end{enumerate}
\end{thm}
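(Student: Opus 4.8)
The plan is to reduce the classification to the pair $(\ell,A)$ via Proposition~\ref{reduction_of_equivalence} and to normalize $\ell$ and $A$ simultaneously on the $A$-invariant orthogonal pieces supplied by Lemma~\ref{regclasslem}. First I would record the orthogonality $Z=W^\perp$: for $z\in Z=\ker A$ and $w=Au\in W$, self-adjointness \eqref{A_symm} gives $\ell(w,z)=\ell(Au,z)=\ell(Az,u)=0$, so $\ell$ splits as $\ell|_W\oplus\ell|_Z$ and the signatures add. Since $A|_Z=0$, an arbitrary $\ell$-orthonormal basis of $Z$ already realizes $\ell|_Z=\mathrm{diag}(\mathbbm 1_{p-p_1},-\mathbbm 1_{q-q_1})$ and $A|_Z=0$. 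Rescaling $A\mapsto\lambda A$ multiplies $\alpha$ in \eqref{kerim0} by $|\lambda|^2>0$, so the sign of $\alpha$ is an invariant and we may assume $A^2|_W=\pm\mathbbm 1_W$ as in \eqref{kerim00}; it then remains only to normalize the pair $(\ell|_W,A|_W)$ in the two cases.

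In case (1), $A|_W$ is an anti-linear involution, hence a real structure on $W$: the fixed set $W^A=\{w:Aw=w\}$ is a real form with $W=W^A\oplus\im W^A$, and in any $\mathbb R$-basis of $W^A$ the operator $A|_W$ acts as coordinate-wise conjugation, i.e. $A|_W=\mathbbm 1_W$. Restricting \eqref{A_symm} to $W^A$ and using that $\ell$ is Hermitian yields $\ell(w_1,w_2)=\overline{\ell(w_1,w_2)}$ for $w_1,w_2\in W^A$, so $\ell|_{W^A}$ is a real symmetric form; diagonalizing it over $\mathbb R$ produces a basis of $W^A$ in which $\ell|_W=\mathrm{diag}(\mathbbm 1_{p_1},-\mathbbm 1_{q_1})$ while $A|_W=\mathbbm 1_W$ is preserved. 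This is \eqref{matrixnormalform1}.

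In case (2), $A|_W$ is anti-linear with $A^2=-\mathbbm 1_W$, so $(W,\im,A)$ is a quaternionic vector space and $\dim_{\mathbb C}W$ is even. The key computation, from \eqref{A_symm} together with $A^2=-\mathbbm 1_W$, is $\ell(Aw,Aw)=\ell(A^2w,w)=-\ell(w,w)$; thus $A$ carries $\ell$-positive vectors to $\ell$-negative ones and is bijective, so it maps a maximal positive-definite subspace isomorphically onto a negative-definite one and conversely, forcing the signature of $\ell|_W$ to be balanced, $(p_1,p_1)$. I would then build the normal-form basis by induction on quaternionic lines: choose $e$ with $\ell(e,e)=1$, adjust $e$ inside the $A$-invariant plane $\mathrm{span}_{\mathbb C}\{e,Ae\}$ (which is $\ell$-nondegenerate of signature $(1,1)$) so that $\ell(e,Ae)=0$, set $f=Ae$ so that $\ell(f,f)=-1$, $Ae=f$, $Af=-e$, and pass to the $\ell$-orthogonal complement, which is again $A$-invariant with the same structure. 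Iterating $p_1$ times yields the basis realizing \eqref{matrixnormalform2}.

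Finally, uniqueness follows from Proposition~\ref{reduction_of_equivalence}: the signature $(p,q)$ of $\ell$, the sign of $\alpha$, and the signature $(p_1,q_1)$ of $\ell|_W=\ell|_{\mathrm{Im}\,A}$ are manifestly invariants of the pair $(\langle\ell\rangle_{\mathbb R},\langle A\rangle_{\mathbb C})$, and the constructions above show that they form a complete set, so that in case (2) only $(p,q)$ and $p_1$ survive. The main obstacle I anticipate is the case~(2) normalization: producing the explicit symplectic-type basis and, in particular, verifying that within each quaternionic line one can kill the cross term $\ell(e,Ae)$ while keeping $\ell(e,e)=1$. Equivalently, one must know that a quaternionic skew-Hermitian form of a given rank has no invariants beyond its dimension, which is precisely the statement that $\mathfrak{so}^*(2m)$ is a single real form; this is the one genuinely computational point, and I would either carry out the $2\times 2$ adjustment directly or invoke the classification of such forms.
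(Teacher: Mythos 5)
Your proposal is correct and follows essentially the same route as the paper: the orthogonality $Z=W^\perp$ from self-adjointness, the fixed-point real form $W^A$ (the paper's $W_+=\{x+Ax\}$) with real diagonalization of $\ell|_{W^A}$ in case (1), and the cone argument plus induction over $A$-stable planes $\mathrm{span}_{\mathbb C}\{e,Ae\}$ in case (2). The one step you defer --- killing the cross term $\ell(e,Ae)$ while keeping $\ell(e,e)=1$ --- is exactly the computation the paper carries out directly: rotate $e$ by a unit scalar to make $\ell(e,Ae)=r$ real, then set $\tilde e=a_1e+a_2Ae$ and solve $z^2=\tfrac{1}{1-\im r}$ for $z=a_1+\im a_2$.
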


To formulate the classification theorem in the case of nilpotent regular symbol, denote by $J_k$ the \emph{nilpotent Jordan block of size $k\times k$} and by $P_k$ the $k\times k$-matrix with $1$'s on the anti-diagonal, i.e., the diagonal going from the lower left corner to the upper right corner of the matrix, and $0$'s elsewhere. In terms of the entries the  matrices $J_k$ and $P_k$ are defined by
\begin{equation*}
(J_k)_{ij}=\begin{cases} 1, & j=i+1\\ 0,&\text{otherwise}\end{cases},\quad (P_k)_{ij}=\begin{cases} 1, & i+j=k+1\\ 0,&\text{otherwise}
\end{cases}, \quad 1\leq i,j\leq k.
\end{equation*}
Also, given square matrices $A_1,\ldots A_s$, we shall denote by $A_1\oplus\cdots\oplus A_s$ the corresponding \emph{block-diagonal} matrix.


\begin{thm}
\label{regclass0}
Consider a nilpotent regular, abstract $2$-nondegenerate CR symbol $\g^0$ described by a pair $(\langle\ell\rangle_{\mathbb R}, \langle A\rangle_{\mathbb C})$. Then there exists a basis of $\g_{-1,1}$, integers $k_1, \dots k_s\in\{1, 2,3\}$, at least one of which is greater than $1$, and numbers $\epsilon_1, \dots, \epsilon_s\in\{-1,1\}$ such that in the chosen basis
\begin{eqnarray}
~&
A=J_{k_1}\oplus\cdots\oplus J_{k_s} \label{case0A},\\
~& \ell=\epsilon_1P_{k_1}\oplus\cdots\oplus \epsilon_s P_{k_s}. \label{case0l}
\end{eqnarray}
\end{thm}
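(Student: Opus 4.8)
The plan is to produce a simultaneous normal form for the pair $(\ell, A)$ on $V:=\g_{-1,1}$ by decomposing $V$ into an $\ell$-orthogonal direct sum of $A$-cyclic subspaces and then classifying each summand. Since the symbol is nilpotent regular, $\alpha=0$ in \eqref{kerim0}, so $A^3=0$; consequently $A^2$ is a \emph{linear} operator that is self-adjoint for $\ell$ in the ordinary Hermitian sense (a short computation from \eqref{A_symm}) and satisfies $(A^2)^2=A^4=0$. Because $A\neq 0$ (this is exactly $2$-nondegeneracy for a $1$-dimensional kernel), the eventual decomposition must contain at least one block of size $>1$, which gives the final clause of the theorem.

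First I would record the two structural facts that drive an induction. For any $v$ the complex span $U=\mathrm{span}_{\mathbb C}\{v,Av,A^2v\}$ is $A$-invariant, since $A(av+bAv+cA^2v)=\bar a\,Av+\bar b\,A^2v$ by anti-linearity and $A^3=0$; and if $U$ is any $A$-invariant subspace then its $\ell$-orthocomplement $U^{\perp}$ is again $A$-invariant, because for $w\in U^\perp$, $u\in U$ one has $\ell(Aw,u)=\ell(Au,w)=0$ by \eqref{A_symm}. Hence it suffices, at each stage, to split off a single $A$-cyclic subspace on which $\ell$ is nondegenerate and then recurse on $U^\perp$, which inherits nondegeneracy of $\ell$, invariance under $A$, and the relation $A^3=0$. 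The existence of such a block is the heart of the argument: if $A^2\neq 0$ on the current space, then the Hermitian form $(v,w)\mapsto \ell(v,A^2 w)$ is nonzero, so by polarization some $v$ has $\ell(v,A^2v)\neq 0$; the chain $\{v,Av,A^2v\}$ is then linearly independent and spans a nondegenerate length-$3$ block. When $A^2=0$ but $A\neq 0$, the same reasoning applied to the complex-bilinear symmetric form $(v,w)\mapsto\ell(v,Aw)$ (symmetric by \eqref{A_symm}) yields a $v$ with $\ell(v,Av)\neq 0$ and a nondegenerate length-$2$ block; and when $A=0$ the residual space carries only the nondegenerate Hermitian form $\ell$, which is diagonalized into length-$1$ blocks.

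On a single cyclic block I would compute the Gram matrix $c_{ij}=\ell(A^iv,A^jv)$ and use \eqref{A_symm} in the form $c_{ij}=c_{j+1,\,i-1}$ together with $A^kv=0$. This relation preserves $i+j$ and forces all entries with $i+j>k-1$ to vanish, so the Gram matrix is anti-triangular with a common value on the anti-diagonal; nondegeneracy makes that value nonzero. Jordan-chain-preserving substitutions $v\mapsto v+aAv+bA^2v$, which fix the top of the chain, then clear the remaining sub-anti-diagonal entries, and the anti-linear rescaling $v\mapsto\lambda v$ multiplies the anti-diagonal value by $|\lambda|^2$ for odd length and by $\lambda^2$ for length $2$. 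Thus the anti-diagonal normalizes to $\epsilon_i=\pm1$ for blocks of size $1$ and $3$ -- a genuine invariant -- while for size $2$ the scalar $\lambda^2$ ranges over all of $\mathbb C^\ast$ and the sign is removable; reordering the chain basis as $A^{k-1}v,\dots,Av,v$ puts $A|_U$ in the form $J_k$ and $\ell|_U$ in the form $\epsilon_i P_k$. Assembling the blocks yields \eqref{case0A}--\eqref{case0l}.

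The main obstacle I anticipate is the existence step -- guaranteeing at each stage a cyclic subspace on which $\ell$ is \emph{nondegenerate} -- for which the self-adjointness \eqref{A_symm} and the polarization argument for $A^2$ (resp.\ $A$) are essential; without it one cannot peel blocks orthogonally. A secondary subtlety, absent in the classical linear self-adjoint classification of \cite{gohberg}, is the bookkeeping of anti-linearity: the Gram entries scale by mixed powers of $\lambda$ and $\bar\lambda$, so one must check carefully that the normalizing scalars realize exactly $\pm P_k$ and that no continuous modulus survives.
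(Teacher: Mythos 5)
Your proposal is correct and follows essentially the same route as the paper's proof: peel off $\ell$-nondegenerate $A$-cyclic blocks of length $3$, $2$, or $1$ according to whether $A^2\neq 0$, $A^2=0\neq A$, or $A=0$ on the current orthocomplement (which is $A$-invariant by self-adjointness), and normalize each block using the shift relation $\ell(A^iv,A^jv)=\ell(A^{j+1}v,A^{i-1}v)$ together with chain-preserving substitutions $v\mapsto v+aAv+bA^2v$. The only real deviation is that where the paper produces a vector with $\ell(A^2a_1,a_1)\neq0$ via a nonzero real eigenvalue of the self-adjoint operator $HA^2$ relative to an auxiliary positive-definite form, you use polarization of the Hermitian form $\ell(A^2\cdot,\cdot)$ directly --- equally valid, and the same device the paper itself invokes in the $A^2=0$ case --- while your added remark that the sign $\epsilon_i$ is removable for length-$2$ blocks (the anti-diagonal entry scales by $\bar\lambda^2$ under $v\mapsto\lambda v$) is a correct refinement not needed for the statement.
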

Before proving these theorems, let us distinguish an important subclass of regular symbols and reveal the consequence of Theorem \ref{regclass} for this subclass.

Strongly non-nilpotent regularity is equivalent to the condition that  $\alpha\neq 0$ together with the condition $Z=0$ in the splitting \eqref{split1-1}. The latter condition is equivalent to the condition that  $p_1=p$ and $q_1=q$ in Theorem \ref{regclass}. Theorem \ref{regclass} reads as follows in this particular case.

\begin{cor}
\label{strongclass}
Suppose a strongly non-nilpotent regular, abstract, $2$-nondegenerate CR symbol $\g^0$ is described by a pair $(\langle\ell\rangle_{\mathbb R}, \langle A\rangle_{\mathbb C})$, such that $\ell$ has signature $(p,q)$ with $p\geq q$. Re-scaling if necessary, we may assume $A^2=\mathbbm 1_{\g_{-1,1}}$ or $A^2=-\mathbbm 1_{\g_{-1,1}}$, depending on the sign of $\alpha$ in \eqref{kerim0}. Then the following two statements hold:
\begin{enumerate}
\item If $A^2=\mathbbm 1_{\g_{-1,1}}$, there is a basis of $\g_{-1,1}$ such that the generator $\ell$ of the canonical line of Hermitian forms and a generator $A$ of the canonical line of anti-linear operators are represented by the matrices
    \begin{equation}
    \label{matrixnormalform1s}
    \ell=\begin{pmatrix} \mathbbm 1_p &0\\ 0&-\mathbbm 1 _q\end{pmatrix} \quad \mathrm {  and }\quad  A=\mathbbm 1_{p+q}
    \end{equation}
     in this basis. Thus, in this case strongly non-nilpotent regular symbols are uniquely -- up to isomorphism -- determined  by the signature $(p,q)$, $p\geq q$.

\item  If $A^2=-\mathbbm 1_{\g_{-1,1}}$ , then $q=p$ and there is a basis of $\g_{-1,1}$
such that a generator $\ell$ of the canonical line of Hermitian forms and a generator $ A$ of the canonical line of anti-linear operators are represented by the matrices
\begin{equation}
    \label{matrixnormalform2s}
    \ell=\begin{pmatrix} \mathbbm 1_p &0\\ 0&-\mathbbm 1_p\end{pmatrix}
    \quad \mathrm{ and } \quad A= \begin{pmatrix}0 &-\mathbbm 1_p \\ \mathbbm 1_p & 0 \end{pmatrix},
    \end{equation}
    in this basis. Thus, in this case strongly non-nilpotent regular symbols exist only if $\dim \g_{-1,1}$ is even and the signature of the Hermitian form is $(p,p)$. Such a symbol is uniquely -- up to an isomorphism -- determined by $p$ or $\dim\,\g_{-1,1}=2p$.
\end{enumerate}
\end{cor}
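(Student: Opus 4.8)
The plan is to derive Corollary \ref{strongclass} as the special case of Theorem \ref{regclass} in which the kernel component $Z$ vanishes. First I would recall that, by Definition \ref{strongreg}, strong non-nilpotency means $\alpha\neq 0$ and $A$ is a bijection; since $Z=\ker A$, bijectivity of $A$ forces $Z=0$, and hence by the splitting \eqref{split1-1} we have $\g_{-1,1}=W=\mathrm{Im}\,A$. Consequently $p_1=p$ and $q_1=q$ in the notation of Theorem \ref{regclass}, and the $Z$-blocks in \eqref{matrixnormalform1} and \eqref{matrixnormalform2} are empty. Therefore the two matrix normal forms of Theorem \ref{regclass}, restricted to $W=\g_{-1,1}$, collapse to exactly the forms \eqref{matrixnormalform1s} and \eqref{matrixnormalform2s}, which is the content of items (1) and (2).

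In more detail, for item (1) the hypothesis $A^2=\mathbbm 1_{\g_{-1,1}}$ is the global version of $A^2|_W=\mathbbm 1_W$ (valid now on all of $\g_{-1,1}$ since $W=\g_{-1,1}$), and the normalization $A^2|_W=\mathbbm 1_W$ is achieved by the rescaling already justified in Theorem \ref{regclass}, using that the sign of $\alpha$ is invariant under rescaling of the generator of $\langle A\rangle_{\mathbb C}$. Then \eqref{matrixnormalform1} with $p_1=p$, $q_1=q$ gives $\ell=\mathbbm 1_p\oplus(-\mathbbm 1_q)$ and $A=\mathbbm 1_{p+q}$, which is \eqref{matrixnormalform1s}; the uniqueness-up-to-isomorphism assertion transfers verbatim since the only remaining invariant is the signature $(p,q)$. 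For item (2), the condition $A^2=-\mathbbm 1_{\g_{-1,1}}$ specializes $A^2|_W=-\mathbbm 1_W$, and Theorem \ref{regclass}(2) already forces $\dim W$ even with $\ell|_W$ of signature $(p_1,p_1)$; setting $W=\g_{-1,1}$ yields $q=p$ and the block forms \eqref{matrixnormalform2} reduce precisely to \eqref{matrixnormalform2s}.

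The only subtlety worth checking is the claim that strong non-nilpotency is genuinely equivalent to ``$\alpha\neq 0$ together with $Z=0$,'' as asserted in the sentence preceding the corollary. This follows because, when $\alpha\neq 0$, Lemma \ref{regclasslem} gives $\g_{-1,1}=Z\oplus W$ and $A^2|_W=\alpha I|_W$ with $A|_W$ invertible; hence $A$ is a bijection on $\g_{-1,1}$ if and only if it has trivial kernel, i.e. $Z=0$. Conversely, if $A$ is a bijection then $Z=\ker A=0$, and $A$ cannot be nilpotent, so $\alpha\neq 0$. I do not anticipate any serious obstacle here: the corollary is a direct bookkeeping specialization of the classification theorem, and essentially no obstacle remains once the equivalence $Z=0\Leftrightarrow A$ bijective is made explicit. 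The one place demanding mild care is confirming that the rescaling used to normalize $A^2|_W$ to $\pm\mathbbm 1_W$ is compatible with the stated hypotheses $A^2=\pm\mathbbm 1_{\g_{-1,1}}$ on the whole space, which is immediate once $Z=0$ is established.
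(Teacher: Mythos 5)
Your proposal is correct and follows exactly the route the paper takes: the paper also obtains Corollary \ref{strongclass} as the specialization of Theorem \ref{regclass} to the case $Z=0$, i.e.\ $p_1=p$ and $q_1=q$, after noting that strong non-nilpotency is equivalent to $\alpha\neq 0$ together with $Z=0$. Your explicit verification of the equivalence ``$A$ bijective $\Leftrightarrow Z=0$'' (given $\alpha\neq0$) is the same one-line justification the paper gives before stating the corollary.
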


\begin{definition}
\label{typedef}
The $2$-nondegenerate CR symbol from item (1) of Corollary \ref{strongclass} is called the \emph{strongly non-nilpotent regular symbol of type $\mathrm{I}_{p,q}$}, or simply \emph{type} I if the specification of the signature is not essential. The $2$-nondegenerate CR symbol from item (2) of Corollary \ref{strongclass} is called the \emph{strongly non-nilpotent regular symbol of type $\mathrm{II}_{p}$}, or simply \emph{type }II if the specification of the signature is not essential.
\end{definition}

\noindent{\bf Proof of Theorem \ref{regclass}}

\noindent First consider the case when $A^2|_W=\mathbbm{1}_{W}$. Following the proof of \cite[Lemma 3.1]{uhl}, let
 $$W_+=\{x+Ax, x\in W\}, \quad  W_-=\{x-Ax, x\in W\}.$$
 Then $W_\pm$ are real vector subspaces and $A w=\pm w$ for any $w\in W_{\pm}$. Also, from anti-linearity, $W_-=\im W_+$ and $W=W_+\oplus W_-$. By the standard theory for nondegenerate Hermitian forms, we can choose  an $\ell$-orthonormal basis in $W_+$ with respect to which $\ell|_W$ and $A|_W$ are represented by matrices as in \eqref{matrixnormalform1}. The two matrix representations for the restrictions of $\ell$ and $A$ to $Z$ follow from the facts that $Z=W^\perp$ and $Z=\ker A$ by choosing an $\ell$-orthonormal basis in $Z$.

Now consider the case $A^2=-\mathbbm{1}_{W}$. Define positive and negative cones in $W$ by $\mathcal C^+=\{w: \ell(w,w)\geq 0\}$ and $\mathcal C^-=\{w: \ell(w,w)\leq 0\}$. By \eqref{A_symm},
$$\ell(A w, A w)=\ell(A^2 w, w)=-\ell(w,w).$$
Hence, the anti-linear map $A$ sends $\mathcal C^+$ to $\mathcal C^-$ and vice versa, which is possible only if the signature of $\ell|_W$ is $(p_1,p_1)$. Indeed, the positive (negative) index of $\ell|_W$ is the maximal dimension of subspaces belonging to $\mathcal C^+$ ($\mathcal C^-)$. Since $A$ sends subspaces to subspaces and is nonsingular on $W$ (from the fact that $A^2=-\mathbbm{1}_{W}$), the positive and negative indices of $\ell|_W$ must be equal.

Let $e\in W$ with $\ell(e,e)=1$ so that $\ell(Ae,Ae)=-1$ and $\text{span}_\mathbb{C}\{e,Ae\}$ is an $A$-stable subspace of $W$. We can take $e,Ae$ to be orthogonal with respect to $\ell$ as follows. If $\ell(e,Ae)=r\mathbf{e}^{\im\phi}$ for real $\phi,r>0$ (and $\mathbf{e}$ the natural exponential), replace $e$ with $\mathbf{e}^{-\im\frac{\phi}{2}}e$ so that $\ell(e,Ae)=r$.

 Let $\E=\text{span}_\mathbb{R}\{e,Ae\}$. Now set
 $\tilde e =a_{1} e+a_{2} Ae$ for $a_1,a_2\in\mathbb{R}$ and let $z=a_{1}+\im  a_{2}$. Invoking our hypothesis on $A$,  $A\tilde e=-a_{2}e+a_{1} Ae$, and it is easy to see that
 \begin{equation}
 \label{zeq}
 \tilde e-\im A \tilde e= z(e-\im Ae).
 \end{equation}
The restriction of $\ell$ to $\E$ defines a symmetric form that can be extended $\mathbb{C}$-linearly to the symmetric form $(\cdot, \cdot)$ on $\mathbb C\E$. By construction, $(e-\im Ae,e-\im Ae)=2-2r\im$ and orthonormality of $\{\tilde e, A\tilde e\}$ is equivalent to $(\tilde e-\im A\tilde e,\tilde e-\im A\tilde e)=2$. From this and \eqref{zeq} it follows that $z$ must satisfy the equation $z^2=\tfrac{1}{1-\im r}$, which has a solution.

 This proves that an orthonormal basis of $\mathcal{E}$ of the form $\{e,Ae\}$ exists. Self-adjointness \eqref{A_symm} ensures that the orthogonal complement of the nondegenerate, $A$-stable subspace $\mathcal E$ is also $A$-stable. Iterating the above procedure, we determine that there is an orthonormal basis $\{e_{\alpha_1}\}_{\alpha_1=1}^{2p_1}$ of $W$ whose first $p_1$ vectors belong to $\mathcal C^+$ and last $p_1$ vectors belong to $\mathcal C^-$, such that $Ae_a=e_{p+a}$ for $1\leq a\leq p_1$. Therefore, the matrix representations of $\ell|_W$ and $A|_W$ in this basis are exactly \eqref{matrixnormalform2}. The proof is completed in the same manner as in the previous case by noticing that $Z=W^\perp$, $Z=\ker A$, and we can choose an $\ell$-orthonormal basis in $Z$ to get the rest of \eqref{matrixnormalform2} concerning the restrictions to $Z$.
 \hfill $\Box$
\medskip

\noindent{\bf Proof of Theorem \ref{regclass0}}

The proof mimics part of the  proof of Theorem 5.1.1 from \cite{gohberg}; the main difference here is that we consider anti-linear instead of linear maps. By \eqref{comm32} $A^3=0$ in the considered case. Consider the following two subcases:

{\bf Case 1} $A^2\neq 0$. Choose some positive definite Hermitian form $(\cdot, \cdot)$ on $\g_{-1,1}$ and let $H$ be the linear operator relating the forms $\ell$ and $(\cdot, \cdot)$, i.e. $\ell(v, w)=(Hv, w), \,\, v, w\in \g_{-1,1}$. By construction, $HA^2$ is a self-adjoint linear operator with respect to $(\cdot, \cdot)$ and hence it has a non-zero (real) eigenvalue. Let $a_1$ be the corresponding eigenvector. Then $\ell(A^2 a_1,a_1)=(HA^2 a_1, a_1)\neq 0$ is real. Normalize $a_1$ such that
\begin{equation}
\label{eps1}
\ell(A^2 a_1, a_1)=\epsilon_1,
\end{equation}
where $\epsilon_1\in\{-1, 1\}$.

Let $a_j:=A^{j-1} a_1$, $j=2,3$. Directly from \eqref{eps1}  and the fact that $A$ is self-adjoint with respect to $\ell$ it follows that
\begin{equation*}
\ell(a_j, a_k)=\epsilon_1, \quad j+k=4.
\end{equation*}
Moreover, from the identity $A^3 a_1=0$ we get
\begin{equation*}
\ell(a_j, a_k)=0, \quad j+k>4.
\end{equation*}
Now set
\begin{equation}
\label{b}
b_1:=a_1+\alpha_2 a_2 +\alpha_3 a_3, \quad b_j:=A^{j-1} b_1, \quad j=2,3.
\end{equation}
We can choose $\alpha_2$, and $\alpha_3$ such that $\ell(b_1, b_1)=\ell (b_1, b_2)=0$. Indeed, from \eqref{b} it follows that the condition
$\ell(b_1, b_2)=0$ is equivalent to
$$\ell(a_1, a_2)+2\epsilon \alpha_2=0,$$
which fixes $\alpha_2$, and the condition $\ell(b_1, b_1)=0$ is equivalent to
$$\ell(a_1, a_1)+2\epsilon_1 \mathrm{Re}\,\alpha_3+(\text{terms depending on } \alpha_2)=0,$$
which can be achieved by an appropriate choice of $\alpha_3$, taking into account $\alpha_2$ is chosen in the previous step.
If $X_1=\mathrm {span} \{b_1, b_2, b_3\}$, then in the basis $(b_3, b_2, b_1)$ the map $A|_{X_1}$ is represented by the matrix  $J_3$ and
the form $\ell|_{X_1}$ is represented by the matrix $\epsilon_1 P_3$.

{\bf Case 2}. $A^2=0$. By assumption, $A\neq 0$, so
in this case there exists a vector $a_1$ such that $\ell(A a_1, a_1)\neq 0$, otherwise using the analog of the polarization identity we will get that $A=0$. Further, we can repeat the argument from the previous case to build two vectors $b_1$ and $b_2$ and the plane $X_1=\mathrm {span} \{b_1, b_2\}$ such that in the basis $(b_2, b_1)$ the map $A|_{X_1}$ is represented by the matrix  $J_2$ and
the form $\ell|_{X_1}$ is represented by the matrix $\epsilon_1 P_2$ for some $\epsilon_1\in \{-1, 1\}$.

Now consider the restrictions of $A$ and $\ell$ to the orthogonal complement $X_1^\perp$ of $X_1$ with respect to $\ell$.
If $A|_{X_1^\perp}=0$, then $A$ is trivially in the form \eqref{case0A} with $k_2=\ldots k_s=1$ and we can bring $\ell$ to the form \eqref{case0l} by  diagonalization.

If $A|_{X_1^\perp}\neq 0$, then it  satisfies one of the two subcases above and  we can repeat the construction to get $k_2$- dimensional subspace $X_2$ of $X_1^\perp$, $k_2\in \{2,3\}$,  such that in some basis of $X_2$
the map $A|_{X_2}$ is represented by the matrix  $J_{k_2}$ and
the form $\ell|_{X_2}$ is represented by the matrix $\epsilon_2 P_{k_2}$ for some $\epsilon_2\in \{-1, 1\}$. Then we repeat the process for the space $(X_1\oplus X_2)^\perp$ and so on to get the desired normal form \eqref{case0A}-\eqref{case0l}.
$\Box$.

\section{Calculation of Bigraded Universal Algebraic Prolongation of Regular Symbols}\label{algebraic_prolongation}

The purpose of the present section is to prove

\begin{thm}
\label{regbuap}
Let $\g^0$ be a regular non-nilpotent CR symbol with $1$-dimensional Levi kernel and refer to Definition \ref{typedef}:
\begin{enumerate}
\item
If $\g^0$ is strongly non-nilpotent regular of Type $\mathrm {I}_{p,q}$, then $\Re\mathfrak{U}_{\text{bigrad}}(\g^0)$ is isomorphic to $\mathfrak {so}(p+2, q+2)$;
\item
If $\g^0$ is strongly non-nilpotent regular of Type $\mathrm {II}_{p}$, then $\Re\mathfrak{U}_{\text{bigrad}}(\g^0)$ is isomorphic to $\mathfrak {so}^*(2p+4)$;
\item
If $\g^0$ is weakly non-nilpotent regular, then $\Re\mathfrak{U}_{\text{bigrad}}(\g^0)=\Re\g^0$.
\end{enumerate}
\end{thm}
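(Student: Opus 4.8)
The plan is to prove (1) and (2) by identifying the complex bigraded Lie algebra $\mathfrak U_{\text{bigrad}}(\g^0)$ with $\mathfrak{so}(m,\mathbb C)$, where $m=p+q+4$, and then reading off the real form from the induced antilinear involution. To this end I would fix a complex $m$-dimensional space $V$ with a nondegenerate symmetric form $S$ and a bigrading $V=\bigoplus_{a,b}V_{(a,b)}$ with $a\in\{-1,0,1\}$, $\dim V_{(\pm1,\pm1)}=1$ and $\dim V_{(0,0)}=p+q$, such that $S$ pairs $V_{(a,b)}$ nondegenerately with $V_{(-a,-b)}$. This induces a bigrading on $\mathfrak{so}(V,S)$ via $\mathfrak{so}(V,S)_{(k,l)}=\{X\in\mathfrak{so}(V,S):XV_{(a,b)}\subseteq V_{(a+k,b+l)}\ \forall a,b\}$. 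A dimension count shows the negative part consists of $\g_{-2,0}$ (biweight $(-2,0)$, dimension $1$) and $\g_{-1,\pm1}$ (each dimension $p+q$), while the first-weight-zero part $\mathfrak{so}(V,S)_0=\mathfrak{so}(V_{(0,0)})\oplus\mathfrak{gl}(V_{(1,1)}\oplus V_{(1,-1)})$ decomposes into the one-dimensional off-diagonal blocks $\g_{0,\pm2}$ of $\mathfrak{gl}(V_{(1,1)}\oplus V_{(1,-1)})$ and the remainder $\g_{0,0}$. The total dimension $\tfrac12(p+q+4)(p+q+3)$ is accounted for exactly.

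Making this rigorous requires two verifications. First, that the nonpositive-first-weight part of $\mathfrak{so}(V,S)$ is isomorphic, as a bigraded Lie algebra, to $\g^0$: here I would check that the induced Hermitian form on $\g_{-1,1}\cong V_{(0,0)}$ is (a multiple of) $\ell$, that composing $\ad$ of a generator of $\g_{0,2}$ with conjugation yields an antilinear operator squaring to $\pm\mathbbm{1}$ as in \eqref{anti-linear}, and that $\g_{0,0}$ is the maximal subalgebra of $\mathfrak{der}(\g_-)$ demanded by Definition \ref{absCRsymboldef}. Second, that $\mathfrak{so}(V,S)$ is the full prolongation. Since the second weights on $V$ lie in $\{-1,0,1\}$, the first-weight-one part of $\mathfrak{so}(V,S)$ automatically has only biweights $(1,\pm1)$, so it satisfies the constraints \eqref{firstalgbigrad1}--\eqref{firstalgbigrad2}; solving those linear systems I would confirm $\g_1=\g_{1,-1}\oplus\g_{1,1}$ has dimension $2(p+q)$ and equals the first-weight-one part of $\mathfrak{so}(V,S)$. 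For $i\ge2$ the bigraded prolongation is the ordinary Tanaka prolongation of $\g_-\oplus\g_0\oplus\g_1$ via \eqref{arbalgTanaka}, and because this is the part of weight $\le1$ of the simple graded algebra $\mathfrak{so}(V,S)$ with $\g_{-1}$ generating $\g_-$, prolongation rigidity forces termination at $\g_2=\mathfrak{so}(V,S)_{(2,0)}$ and equality with $\mathfrak{so}(V,S)$. Nondegeneracy is automatic since $\mathfrak{so}(V,S)$ is simple and the grading arises from a grading element.

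The real form is then dictated by the antilinear involution, which is induced by an antilinear $\sigma\colon V\to V$ with $\sigma(V_{(a,b)})=V_{(a,-b)}$ and $S(\sigma u,\sigma v)=\overline{S(u,v)}$; then $X\mapsto\sigma X\sigma^{-1}$ sends $\g_{(k,l)}$ to $\g_{(k,-l)}$, matching \eqref{conj}. The sign of $\sigma^2$ is governed by the sign of $A^2$. For Type $\mathrm I$ ($A^2=\mathbbm{1}$) one may take $\sigma^2=\mathrm{Id}$, a real structure, so $\Re\mathfrak U_{\text{bigrad}}(\g^0)=\mathfrak{so}(V^\sigma,S|_{V^\sigma})$; the real form $V^\sigma$ receives signature $(p,q)$ from $\ell$ on the $A$-fixed real subspace of $\g_{-1,1}$ and split signature $(2,2)$ from the four contact directions $V_{(\pm1,\pm1)}$, which yields $\mathfrak{so}(p+2,q+2)$. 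For Type $\mathrm{II}$ ($A^2=-\mathbbm{1}$, forcing $q=p$) the compatible $\sigma$ satisfies $\sigma^2=-\mathrm{Id}$, i.e. $\sigma$ is a quaternionic structure compatible with $S$, and the corresponding real form of $\mathfrak{so}(2p+4,\mathbb C)$ is $\mathfrak{so}^*(2p+4)$.

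For (3), weakly non-nilpotent, it suffices to prove $\g_1=0$: then every $f\in\g_2$ obeys $[\g_{-1},f]\subseteq\g_1=0$, whence $\ad(f)|_{\g_-}=0$ and $f=0$ by nondegeneracy, and inductively $\g_i=0$ for all $i\ge1$, giving $\mathfrak U_{\text{bigrad}}(\g^0)=\g^0$. I would show $\g_{1,1}=0$ (and $\g_{1,-1}=0$ by conjugation) straight from \eqref{firstalgbigrad2}: decomposing $f\in\widetilde\g_{1,1}$ into its components and imposing $[f,\g_{0,2}]=0$ and $[[f,\g_{0,-2}],\g_{0,-2}]=0$, the coexistence of a nonzero kernel $Z$ and nonzero image $W$ of $A$ (recall $A^3=\alpha A$ with $\alpha\neq0$ and $\g_{-1,1}=Z\oplus W$ by Lemma \ref{regclasslem}) renders the two constraints incompatible unless $f=0$: roughly, the first forces the relevant part of $f$ into $W$ while the second forces it into the annihilator of $Z$, and these cannot coexist. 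I expect the principal obstacle to be the second verification in the strongly non-nilpotent case, namely establishing the maximality built into Definition \ref{universal} -- showing the prolongation does not continue beyond $\mathfrak{so}(V,S)$ -- which demands either a careful term-by-term solution of the recursive systems \eqref{arbalgTanaka} or a clean invocation of prolongation rigidity for this grading, while correctly accounting for the nonstandard weight-one constraint; a secondary delicate point is pinning down the sign of $\sigma^2$ and the signature of the four contact directions.
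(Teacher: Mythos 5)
Your strategy for (1) and (2) is genuinely different from the paper's. The paper proceeds ``from below'': it fixes the adapted basis of Corollary \ref{strongclass}, writes out all structure equations of $\g^0$, exhibits explicit spanning derivations $E_\alpha,\overline{E}_\alpha$ of $\g_{1,\pm1}$ and $E_0$ of $\g_2$, checks that higher prolongations vanish, and only then identifies the result with $\mathfrak{so}(p+2,q+2)$ (resp.\ $\mathfrak{so}^*(2p+4)$) by matching structure constants against an explicit matrix basis. You instead build the bigraded $\mathfrak{so}(m,\mathbb C)$ first and recognize $\g^0$ as its nonpositive part; this buys a cleaner identification of the complex algebra and, especially, of the real form -- your $\sigma$-argument, including the signature count $(p,q)+(2,2)$ and the $\sigma^2=\pm\mathrm{Id}$ dichotomy, is correct and more transparent than the paper's term-by-term matching. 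Part (3) is in essence the paper's argument (both exploit the splitting $\g_{-1,1}=Z\oplus W$ of Lemma \ref{regclasslem}), though the paper's route is sharper: it shows any $f\in\g_{1,1}$ annihilates $Z$, $\g_{-2}$ and $\overline{Z}$, hence restricts to a first-prolongation element of the underlying strongly non-nilpotent symbol that kills $\g_{-2}$, which forces $f=0$; your ``the two constraints cannot coexist'' needs to be developed to that level of detail.

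The one substantive gap is the maximality step you yourself flag. ``Prolongation rigidity'' in the Tanaka--Yamaguchi sense applies to the universal prolongation of $(\g_-,\g_0)$, and that is not the object at hand. Indeed, for $\dim_{\mathbb C}\g_{-1,\pm1}=1$ one has $\g_0=\mathfrak{der}(\g_-)\cong\mathfrak{csp}(\g_{-1})$, so $\mathfrak U(\g_-\oplus\g_0)$ is the infinite-dimensional contact-projective prolongation and $\widetilde\g_1$ is $8$-dimensional, while $\g_1$ is cut down to dimension $2$ only by the bigraded conditions \eqref{firstalgbigrad1}--\eqref{firstalgbigrad2}; no simplicity argument is available before that cut. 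Even for $n\geq2$, where one could try to identify $\g_0$ with the grading-zero part of the contact grading of $\mathfrak{so}(n+4,\mathbb C)$ and quote Yamaguchi, the recursion \eqref{arbalgTanaka} prolongs $\g_-\oplus\g_0\oplus\g_1$ with the possibly smaller bigraded $\g_1$, a situation not literally covered by the standard theorem. So the upper bound $\dim\g_{1,\pm1}\leq n$ and the termination at $\g_2$ genuinely require solving the linear systems -- which is exactly the computation the paper carries out. Until that is done, your argument establishes only the inclusion $\mathfrak{so}(V,S)\subseteq\mathfrak U_{\text{bigrad}}(\g^0)$, not equality.
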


\subsection{Proof of parts (1) and (2) of Theorem \ref{regbuap}}

\subsubsection{ An adapted basis of $\g^0$}
\label{basis}
Let $\g^0$ be strongly non-nilpotent regular symbol with $n=\dim_{\mathbb C}\g_{-1,\pm1}$.
We introduce a convenient basis of $\g_-$ and explicitly calculate bigraded components of $\g^0$ with weight $0$.
 Fix index ranges and constants
\begin{align}\label{g-indices}
&1\leq a,b\leq p,
&&1\leq\alpha,\beta,\gamma\leq n,
&&\epsilon_\alpha=\left\{\begin{smallmatrix}1,&\alpha\leq p;\\-1,&\alpha>p.\end{smallmatrix}\right.
\end{align}
Let $\{e_\alpha\}$ be the basis of $\g_{-1,1}$ from Corollary \eqref{strongclass} so that the Hermitian form $\ell$ and operator $A$ are nicely presented by normal forms \eqref{matrixnormalform1s} or \eqref{matrixnormalform2s}. Including $\overline{e}_\alpha\in\g_{-1,-1}$  and $e_0$ as in \eqref{hermit} completes an adapted basis of $\g_-$:
\begin{align*}
&e_0\in\im\Re\g_{-2,0},
&\{e_\alpha\}\subset\g_{-1,1},
&&\{\overline{e}_{\alpha}\}\subset\g_{-1,-1},
\end{align*}
with bracket relations
\begin{align}\label{g-SE}
\epsilon_\alpha e_0=[ e_\alpha,\overline{e}_{\alpha}],
\end{align}
all others being trivial. The corresponding real basis for $\Re\g_-$ is
\begin{align}\label{reg-}
&\im e_0\in\Re\g_{-2},
&\{(e_\alpha+\overline{e}_\alpha),\ \im(e_\alpha-\overline{e}_\alpha)\}\subset\Re\g_{-1}.
\end{align}
A raised index $e^0,e^\alpha,\overline{e}^\alpha\in(\g_-)^*$ refers to the dual basis vector $e^\alpha(e_\alpha)=1$, etc. which vanishes on all other basis elements. The overline notation for complex conjugation is therefore consistent with \eqref{conjext}.

For the  symbol of Type $\mathrm{I}_{p,q}$ we choose basis elements for $\g_{0,\pm 2}$ by specifying how they act on $\g_{-1,\mp 1}$ according to the normal form for the anti-linear operator $A$ from \eqref{matrixnormalform1s}:
\begin{align}
&e_{n+1}=\ad_v=\sum_{\alpha}e_\alpha\otimes \overline{e}^{\alpha}\in\g_{0,2},
&\overline{e}_{n+1}=\ad_{\overline{v}}=\sum_{\alpha}\overline{e}_{\alpha}\otimes e^{\alpha}\in\g_{0,-2},
\tag{Type I}
\end{align}
whose corresponding bracket relations are
\begin{align}\label{ImSE}
&[ e_{n+1},\overline{e}_{\alpha}]=e_\alpha,
&[ \overline{e}_{n+1},e_{\alpha}]=\overline{e}_{\alpha},
&&[ e_{n+1},\overline{e}_{n+1}]=\sum_\alpha e_\alpha\otimes e^\alpha-\overline{e}_\alpha\otimes \overline{e}^\alpha.
\end{align}

In the case of the symbol of Type $\mathrm {II}_{p}$
\begin{equation}\tag{Type II}
\begin{aligned}
&e_{n+1}=\ad_v=\sum_{a}e_{p+a}\otimes \overline{e}^{a}-e_a\otimes \overline{e}^{p+a}\in\g_{0,2},\\
&\overline{e}_{n+1}=\ad_{\overline{v}}=\sum_{a}\overline{e}_{p+a}\otimes e^{a}-\overline{e}_{a}\otimes e^{p+a}\in\g_{0,-2},
\end{aligned}
\end{equation}
giving brackets
\begin{align}\label{IImSE}
&[ e_{n+1},\overline{e}_{a}] =e_{p+a},
&[ e_{n+1},\overline{e}_{p+a}]=-e_a,
&&[ \overline{e}_{n+1},e_a]=\overline{e}_{p+a},
&&[ \overline{e}_{n+1},e_{p+a}]=-\overline{e}_{a},
\end{align}
\begin{align*}
[ e_{n+1},\overline{e}_{n+1}]=\sum_{\alpha}-e_\alpha\otimes e^\alpha+\overline{e}_\alpha\otimes \overline{e}^\alpha.
\end{align*}
Both symbol types relate to their real forms via
\begin{align}\label{reg0pm2}
(e_{n+1}+\overline{e}_{n+1}),\im(e_{n+1}-\overline{e}_{n+1})\in\Re\g_0.
\end{align}

With $e_0\in\g_{-2,0}$ fixed, one can interpret the Lie bracket as a symplectic form $\sigma\in\Lambda^2(\g_{-1})^*$ on the vector space $\g_{-1,1}\oplus\g_{-1,-1}$ via $[ y_1,y_2]=\sigma(y_1,y_2)e_0$. Each bigraded component $\g_{-1,\pm 1}$ determines a Lagrangian subspace. Identifying $\{e_\alpha,\overline{e}_{\alpha}\}$ with the standard basis of column vectors for $\mathbb{C}^{2n}$, $\sigma(e_\alpha,\overline{e}_{\beta})=e_\alpha^t[\sigma]\overline{e}_{\beta}$ for the matrix $[\sigma]$ divided into $n\times n$ blocks
\begin{align*}
[\sigma]=\left[\begin{array}{cc}0&\epsilon\\-\epsilon&0\end{array}\right],
\end{align*}
with $\epsilon\in\text{Mat}_{n\times n}\mathbb{R}$ as the diagonal matrix $\epsilon_{\alpha,\beta}=\epsilon_\alpha\delta_{\alpha,\beta}$ (Kronecker delta). The symplectic group $Sp_{2n}\mathbb{C}$ consists of $2n\times2n$ matrices $S$ satisfying $S^t[\sigma] S=[\sigma]$, so the group $\hat{G}_{0,0}$ of all bigraded algebra automorphisms of $\g_-$ is represented by the subgroup of $Sp_{2n}\mathbb{C}$ comprised of $n\times n$ block-diagonal matrices, along with the \emph{conformal scaling} operation
\begin{align}\label{conf_scale}
&\{e_0,e_\alpha,\overline{e}_{\alpha}\}\mapsto\{r^2e_0,re_\alpha,r\overline{e}_{\alpha}\},
&r\in\mathbb{R}\setminus\{0\},
\end{align}
corresponding to a different choice of $e_0\in\im\Re\g_{-2}$.

The Lie algebra of $Sp_{2n}\mathbb{C}$ is denoted $\mathfrak{sp}_{2n}\mathbb{C}$, and its standard representation is discussed in \cite{fulton}. The subalgebra $\mathfrak{sp}_{2n}\mathbb{C}\cap\hat{\g}_{0,0}$ is represented by matrices of the form
\begin{align}\label{hatg00_rep}
&\left[\begin{array}{cc}\epsilon B&0\\0&-\epsilon B^t\end{array}\right],
&B\in\text{Mat}_{n\times n}\mathbb{C}.
\end{align}
We can also represent the restrictions to $\g_{-1}$ of $\ad_{e_{n+1}}$ and $\ad_{\overline{e}_{n+1}}$ as off-diagonal matrices in $\mathfrak{sp}_{2n}\mathbb{C}$. For CR symbols of Type I,
\begin{align}\label{ad_typeI}
&\ad_{e_{n+1}}=\left[\begin{array}{cc}0&\mathbbm{1}_n\\0&0\end{array}\right],
&\ad_{\overline{e}_{n+1}}=\left[\begin{array}{cc}0&0\\\mathbbm{1}_n&0\end{array}\right],
\end{align}
where $\mathbbm{1}_n$ is the $n\times n$ identity matrix. For Type II when $n=2p$, we subdivide into $p\times p$ blocks
\begin{align*}
&\ad_{e_{n+1}}=\left[\begin{array}{cccc}0&0&0&-\mathbbm{1}_p\\0&0&\mathbbm{1}_p&0\\0&0&0&0\\0&0&0&0\end{array}\right],
&\ad_{\overline{e}_{n+1}}=\left[\begin{array}{cccc}0&0&0&0\\0&0&0&0\\0&-\mathbbm{1}_p&0&0\\\mathbbm{1}_p&0&0&0\end{array}\right].
\end{align*}

Writing $[\epsilon B]$ to abbreviate the matrix \eqref{hatg00_rep}, $\g_{0,0}\subset\mathfrak{der}(\g_-)$ defined by\eqref{g001}-\eqref{g002}  contains all such matrices which satisfy
\begin{align}\label{g00_rep}
&[\epsilon B]\ad_{ v}-\ad_{ v}[\epsilon B]=c\ad_{ v},
& v=e_{n+1},\overline{e}_{n+1},
&&c\in\mathbb{C}.
\end{align}
In particular, $[\mathbbm{1}_n]\in\g_{0,0}$, and we name
\begin{align}\label{Id}
E=\sum_{\alpha}e_\alpha\otimes e^\alpha-\overline{e}_\alpha\otimes \overline{e}^\alpha,
\end{align}
which is imaginary with respect to conjugation in $\g^0$. In addition to \eqref{g00_rep}, $\g_{0,0}$ includes the infinitesimal generator of the conformal scaling operation \eqref{conf_scale},
\begin{align}\label{conf_alg}
\widehat{E}=2e_0\otimes e^0+\sum_{\alpha}e_\alpha\otimes e^\alpha+\overline{e}_\alpha\otimes \overline{e}^\alpha,
\end{align}
which is real and clearly commutes with $e_{n+1}$ and $\overline{e}_{n+1}$. The real form $\Re\g^0$ will feature
\begin{align}\label{reEhatE}
\im E,\widehat{E}\in\Re\g_{0,0}.
\end{align}

We augment our bracket relations
\begin{equation}\label{ESE}
\begin{aligned}
&[ E,e_\alpha]=e_\alpha=[ \widehat{E},e_\alpha],
&[ E,\overline{e}_\alpha]=-\overline{e}_\alpha=-[ \widehat{E},\overline{e}_\alpha],
&&[\widehat{E},e_0] =2e_0,\\
&[ E,e_{n+1}] =2e_{n+1},
&[ E,\overline{e}_{n+1}]=-2\overline{e}_{n+1},
&&[ e_{n+1},\overline{e}_{n+1}]=\pm E,
\end{aligned}
\end{equation}
where the last bracket depends on the CR symbol Type. We will consider the two symbol Types separately to fill out a basis of $\g_{0,0}$ and perform the algebraic prolongation procedure.

\subsubsection{Type I CR Symbols and the Algebra $\mathfrak{so}(p+2,q+2)$.}
\label{TypeI}

In light of \eqref{ad_typeI}, condition \eqref{g00_rep} is equivalent to $\epsilon(B+B^t)=c\mathbbm{1}_n$. For diagonal $B$, we have already seen in \eqref{Id} that $B=\epsilon\mathbbm{1}_n$ corresponds to $E\in\g_{0,0}$, which leaves the case $c=0$ when $B$ is skew-symmetric. For each pair $\alpha,\beta$ of indices \eqref{g-indices} with $\alpha<\beta$, define
\begin{align*}
e_{\alpha\beta}=\epsilon_\alpha e_\alpha\otimes e^\beta-\epsilon_\beta e_\beta\otimes e^\alpha+\epsilon_\alpha \overline{e}_\alpha\otimes \overline{e}^\beta-\epsilon_\beta \overline{e}_\beta\otimes \overline{e}^\alpha\in\g_{0,0},
\end{align*}
and denote
\begin{align*}
&e_{\beta\alpha}=-e_{\alpha\beta},
&e_{\alpha\alpha}=0.
\end{align*}
Notice that by \eqref{conjext}, $\overline{e_{\alpha\beta}}=e_{\alpha\beta}$, whereby
\begin{align}\label{reIg00}
e_{\alpha\beta}\in\Re\g_{0,0}.
\end{align}
Along with $E$ and $\widehat{E}$ as in \eqref{Id} and \eqref{conf_alg}, these $\binom{n}{2}$ derivations complete a basis for $\g_{0,0}$. They also have structure equations
\begin{align}\label{g00skewSE}
&[ e_{\alpha\beta},e_\gamma]=\delta^\beta_{\gamma}\epsilon_\alpha e_\alpha-\delta^\alpha_{\gamma}\epsilon_\beta e_\beta,
&[ e_{\alpha\beta},\overline{e}_\gamma]=\delta^\beta_{\gamma}\epsilon_\alpha \overline{e}_\alpha-\delta^\alpha_{\gamma}\epsilon_\beta \overline{e}_\beta,
&&[ e_{\alpha\beta},e_{\beta\gamma}]=\epsilon_\beta e_{\alpha\gamma},
\end{align}
all others being trivial.

The following $n$ derivations of bigraded degree (1,1) span $\g_{1,1}$,
\begin{align*}
E_{\alpha}=2e_\alpha\otimes e^0+2\epsilon_\alpha e_{n+1}\otimes e^\alpha+\epsilon_\alpha(\widehat{E}-E)\otimes \overline{e}^\alpha-2\epsilon_\alpha\sum_{\beta}\epsilon_\beta e_{\alpha\beta}\otimes\overline{e}^\beta.
\end{align*}
Similarly, $\g_{1,-1}$ is spanned by
\begin{align*}
\overline{E}_{\alpha}=-2\overline{e}_\alpha\otimes e^0 +2\epsilon_\alpha \overline{e}_{n+1}\otimes \overline{e}^\alpha+\epsilon_\alpha(\widehat{E}+E)\otimes e^\alpha-2\epsilon_\alpha\sum_{\beta}\epsilon_\beta e_{\alpha\beta}\otimes e^\beta.
\end{align*}
The second bigraded prolongation is spanned by the imaginary derivation of bigraded degree $(2,0)$
\begin{align*}
E_0=2\widehat{E}\otimes e^0+\sum_\alpha (E_\alpha\otimes e^\alpha-\overline{E}_\alpha\otimes\overline{e}^\alpha),
\end{align*}
and all higher bigraded prolongations are trivial. For the real form, we have
\begin{align}\label{reg+}
&\{(E_\alpha+\overline{E}_\alpha),\ \im(E_\alpha-\overline{E}_\alpha)\}\subset\Re\g_{1},
&\im E_0\in\Re\g_{2}.
\end{align}

The final algebra $\g=\g^1\oplus\g_{2}$ has dimension $4n+6+\binom{n}{2}=\binom{n+4}{2}$. We will see that $\Re\g$ is isomorphic to a familiar matrix Lie algebra, so fix the constant and index ranges
\begin{align*}
&N=n+4,
&1\leq i,j,k,l\leq N,
\end{align*}
and write $E^i_j$ for the $N\times N$ matrix which has entry 1 in the $i^{\text{th}}$ column and $j^{\text{th}}$ row and zeros elsewhere. Multiplication in $\text{Mat}_{N\times N}\mathbb{C}$ is written in terms of the basis $\{E^i_j\}$ using the Kronecker delta,
\begin{align*}
E^i_jE^k_l=\delta^i_lE^k_j,
\end{align*}
and $\mathfrak{gl}_N\mathbb{R}=\text{Mat}_{N\times N}\mathbb{C}$ is equipped with the Lie bracket defined by the matrix commutator
\begin{align}\label{mat_comm}
[ E^i_j,E^k_l] =\delta^i_lE^k_j-\delta^k_jE^i_l.
\end{align}

$\mathbb{R}^N$ admits a symmetric bilinear form $Q$ of signature $(p+2,q+2)$ defined by
\begin{align*}
&Q=E_1^3+E_3^1+E_2^4+E_4^2+\sum_\alpha\epsilon_\alpha E^{4+\alpha}_{4+\alpha};
&\text{i.e.,}
&&Q=\left[\begin{array}{cc}\begin{smallmatrix}0&0&1&0\\0&0&0&1\\1&0&0&0\\0&1&0&0\end{smallmatrix}&0\\0&\epsilon\end{array}\right].
\end{align*}
Let $\mathfrak{so}(p+2,q+2)$ denote the Lie subalgebra of $\mathfrak{gl}_N\mathbb{R}$ comprised of matrices $C$ which satisfy
\begin{align*}
C^tQ+QC=0.
\end{align*}
To show that $\Re\g$ is isomorphic to $\mathfrak{so}(p+2,q+2)$, we will exhibit a basis of the latter with the same names as that of the former such that the structure equations of each coincide. To begin, set
\begin{align}\label{so42-}
&e_0=E_4^1-E_3^2,
&e_\alpha=\epsilon_\alpha E_3^{4+\alpha}-E_{4+\alpha}^1,
&&\overline{e}_\alpha=\epsilon_\alpha E_4^{4+\alpha}-E_{4+\alpha}^2,
\end{align}
so that these replicate \eqref{g-SE}. Corresponding to $\g_{0,\pm2}$ we have
\begin{align}\label{so42ad}
&e_{n+1}=E_3^4-E^1_2,
&\overline{e}_{n+1}=E_4^3-E^2_1,
\end{align}
brackets for which agree with \eqref{ImSE}. Analogous to basis elements of $\g_{0,0}$ we name
\begin{align*}
&E=-E^1_1+E^2_2+E^3_3-E^4_4,
&\widehat{E}=-E^1_1-E^2_2+E^3_3+E^4_4,
&&e_{\alpha\beta}=\epsilon_\alpha E^{4+\beta}_{4+\alpha}-\epsilon_\beta E^{4+\alpha}_{4+\beta},
\end{align*}
whose brackets with \eqref{so42-} and \eqref{so42ad} in $\mathfrak{so}(p+2,q+2)$ reproduce \eqref{ESE} and \eqref{g00skewSE}. Playing the same role as the first prolongation in $\g$ are the matrices
\begin{align*}
&E_\alpha=2(\epsilon_\alpha E_2^{4+\alpha}-E^4_{4+\alpha}),
&\overline{E}_\alpha=2(\epsilon_\alpha E_1^{4+\alpha}-E^3_{4+\alpha}),
\end{align*}
and a basis of $\mathfrak{so}(p+2,q+2)$ is completed by the matrix
\begin{align*}
E_0=2(E_2^3-E_1^4).
\end{align*}

A change of (real) basis according to \eqref{reg-}, \eqref{reg0pm2}, \eqref{reEhatE}, \eqref{reIg00}, and \eqref{reg+} now shows that $\Re\g$ is isomorphic to $\mathfrak{so}(p+2,q+2)$.

\subsubsection{Type II CR Symbols and the Algebra $\mathfrak{so}^*(2p+4)$}
\label{TypeII}

Recall that Type II is only possible for $p=q$ when $n=2p$. In this case,
\begin{align*}
\epsilon=\left[\begin{array}{cc}\mathbbm{1}_p&0\\0&-\mathbbm{1}_p\end{array}\right],
\end{align*}
and \eqref{hatg00_rep} is clarified by splitting $B\in\text{Mat}_{n\times n}\mathbb{C}$ into $p\times p$ blocks $B_1,B_2,B_3,B_4\in \text{Mat}_{p\times p}\mathbb{C}$ to yield
\begin{align*}
\left[\begin{array}{rrrr}B_1&B_2&0&0\\-B_3&-B_4&0&0\\0&0&-B_1^t&-B_3^t\\0&0&B_2^t&B_4^t\end{array}\right],
\end{align*}
at which point \eqref{g00_rep} becomes
\begin{align*}
&B_2+B_2^t=0=B_3+B_3^t,
&B_1-B_4^t=\pm c\mathbbm{1}_p.
\end{align*}
We have already seen that $c\neq0$ corresponds to $E\in\g_{0,0}$ when $B_1=\mathbbm{1}_p=-B_4^t$, which leaves matrices satisfying $B_4=B_1^t$ while $B_2$ and $B_3$ are skew-symmetric.

For each pair $a,b$ of indices \eqref{g-indices}, define
\begin{align*}
E_{ab}=e_a\otimes e^b-e_{p+b}\otimes e^{p+a}-\overline{e}_b\otimes\overline{e}^a+\overline{e}_{p+a}\otimes\overline{e}^{p+b},
\end{align*}
so that $E_{ab}-E_{ba}$ is real and $E_{ab}+E_{ba}$ is imaginary with respect to conjugation \eqref{conjext} in $\g^0$. For each pair of indices with $a<b$,
\begin{align*}
e_{ab}&=e_a\otimes e^{p+b}-e_{b}\otimes e^{p+a}+\overline{e}_{p+b}\otimes\overline{e}^a-\overline{e}_{p+a}\otimes\overline{e}^b,
&e_{ba}=-e_{ab},
&&e_{aa}=0,\\
\overline{e}_{ab}&=\overline{e}_a\otimes \overline{e}^{p+b}-\overline{e}_{b}\otimes \overline{e}^{p+a}+e_{p+b}\otimes e^a-e_{p+a}\otimes e^b,
&\overline{e}_{ba}=-\overline{e}_{ab},
&&\overline{e}_{aa}=0.
\end{align*}
These $p^2+2\binom{p}{2}=\binom{2p}{2}$ derivations complete $E,\widehat{E}$ to a basis of $\g_{0,0}$. A corresponding basis for the real form is furnished by
\begin{align}\label{reIIg00}
(E_{ab}-E_{ba}), \im(E_{ab}+E_{ba}),(e_{ab}+\overline{e}_{ab}),\im(e_{ab}-\overline{e}_{ab})\in\Re\g_{0,0}.
\end{align}

For the first bigraded prolongation, $\g_{1,1}$ is spanned by $n$ derivations
\begin{align*}
E_a&=2e_a\otimes e^0+2e_{n+1}\otimes e^{p+a}+(\widehat{E}-E)\otimes\overline{e}^a-2\sum_bE_{ab}\otimes\overline{e}^b+2\sum_be_{ab}\otimes\overline{e}^{p+b},\\
E_{p+a}&=2e_{p+a}\otimes e^0+2e_{n+1}\otimes e^a+(E-\widehat{E})\otimes \overline{e}^{p+a}-2\sum_b E_{ba}\otimes\overline{e}^{p+b}+2\sum_b\overline{e}_{ab}\otimes\overline{e}^b,
\end{align*}
while $\g_{1,-1}$ is the span of
\begin{align*}
\overline{E}_a&=-2\overline{e}_a\otimes e^0+2\overline{e}_{n+1}\otimes\overline{e}^{p+a}+(\widehat{E}+E)\otimes e^a+2\sum_bE_{ba}\otimes e^b+2\sum_b\overline{e}_{ab}\otimes e^{p+b},\\
\overline{E}_{p+a}&=-2\overline{e}_{p+a}\otimes e^0+2\overline{e}_{n+1}\otimes\overline{e}^{a}-(\widehat{E}+E)\otimes e^{p+a}+2\sum_bE_{ab}\otimes e^{p+b}+2\sum_be_{ab}\otimes e^{b}.
\end{align*}
As in the case of Type I, the second bigraded prolongation is spanned by the imaginary derivation of bigraded degree $(2,0)$
\begin{align*}
E_0=2\widehat{E}\otimes e^0+\sum_\alpha (E_\alpha\otimes e^\alpha-\overline{E}_\alpha\otimes\overline{e}^\alpha),
\end{align*}
and all higher bigraded prolongations are trivial.

Again, the real form of the final algebra $\g=\g^1\oplus\g_2$ is isomorphic to a real form of the complex, special orthogonal algebra $\mathfrak{so}_{n+4}\mathbb{C}$. Using the same representation as in \cite{fulton}, $\mathfrak{so}_{2(p+2)}\mathbb{C}$ is realized as complex matrices of the form
\begin{align}\label{soC}
&\left[\begin{array}{cr}A&B\\C&-A^t\end{array}\right],
&A,B,C\in\text{Mat}_{(p+2)\times(p+2)}\mathbb{C};
&&B+B^t=0=C+C^t.
\end{align}
The real form $\mathfrak{so}^*(2p+4)\subset \mathfrak{so}_{2(p+2)}\mathbb{C}$ is the subalgebra of matrices \eqref{soC} which additionally satisfy
\begin{align*}
\left[\begin{array}{cr}\overline{A}^t&\overline{C}^t\\\overline{B}^t&-\overline{A}\end{array}\right]
\left[\begin{array}{rc}0&\mathbbm{1}_{p+2}\\-\mathbbm{1}_{p+2}&0\end{array}\right]+
\left[\begin{array}{rc}0&\mathbbm{1}_{p+2}\\-\mathbbm{1}_{p+2}&0\end{array}\right]
\left[\begin{array}{cr}A&B\\C&-A^t\end{array}\right]=0,
\end{align*}
so that $\overline{A}=A$, $\overline{B}=-B$, and $\overline{C}=-C$. This is a real Lie algebra with the matrix commutator bracket \eqref{mat_comm}, and we use the same matrices $E^i_j$ as in \S\ref{TypeI} to construct a basis of $\mathfrak{so}^*(2p+4)$ whose structure equations are the same as those of $\g$. To begin, set
\begin{align*}
&e_0=\im E^{p+2}_{n+3}-\im E_{n+4}^{p+1},
&&e_a=\im E^{p+1}_{p+2+a}-\im E^a_{n+3},
&&e_{p+a}=E^{p+1}_a-E^{p+2+a}_{n+3},\\
&
&&\overline{e}_a=E^{p+2+a}_{n+4}-E^{p+2}_a,
&&\overline{e}_{p+a}=\im E^{p+2}_{p+2+a}-\im E_{n+4}^a,
\end{align*}
so that brackets of these matrices agree with \eqref{g-SE}. Adding
\begin{align*}
&e_{n+1}=E^{p+1}_{p+2}-E^{n+4}_{n+3},
&\overline{e}_{n+1}=E^{n+3}_{n+4}-E^{p+2}_{p+1},
\end{align*}
yields equations \eqref{IImSE}. To replicate \eqref{ESE}, set
\begin{align*}
&E=-E_{p+1}^{p+1}+E^{p+2}_{p+2}+E_{n+3}^{n+3}-E_{n+4}^{n+4},
&\widehat{E}=-E_{p+1}^{p+1}-E^{p+2}_{p+2}+E_{n+3}^{n+3}+E_{n+4}^{n+4},
\end{align*}
along with
\begin{align*}
&E_{ab}=-E_b^a+E_{p+2+a}^{p+2+b},
&e_{ab}=\im E^b_{p+2+a}-\im E^a_{p+2+b},
&&\overline{e}_{ab}=\im E_a^{p+2+b}-\im E^{p+2+a}_{b},
\end{align*}
completing a basis corresponding to that of $\g_{0,0}$. Playing the role of the first bigraded prolongation, we have
\begin{align*}
&E_a=2E^a_{p+2}-2E^{n+4}_{p+2+a},
&&E_{p+a}=2\im E^{n+4}_a-2\im E^{p+2+a}_{p+2},\\
&\overline{E}_a=2\im E_{p+1}^{p+2+a}-2\im E^{n+3}_a,
&&\overline{E}_{p+a}=2E^a_{p+1}-2E^{n+3}_{p+2+a},
\end{align*}
and a basis for $\mathfrak{so}^*(2p+4)$ is completed by
\begin{align*}
E_0=2\im E^{n+3}_{p+2}-2\im E^{n+4}_{p+1}.
\end{align*}

A change of (real) basis according to \eqref{reg-}, \eqref{reg0pm2}, \eqref{reEhatE}, \eqref{reIIg00}, and \eqref{reg+} now shows that $\Re\g$ is isomorphic to $\mathfrak{so}^*(2p+4)$.

This concludes the proof of parts (1) and (2) of Theorem \ref{regbuap}.

\subsection{Proof of part (3) of Theorem \ref{regbuap}} \label{anot0}

Recalling Lemma \ref{regclasslem}, we have a splitting $\g_{-1,1}=Z\oplus W$ where
\begin{align*}
&Z=\ker\ad_{\overline{K}},
&W=\text{Im}(\ad_K).
\end{align*}
By \eqref{g002}, it is straightforward to confirm that derivations in $\g_{0,0}$ preserve the splitting of $\g_{-1,1}$. We augment \eqref{g-indices} with index ranges
\begin{align*}
&1\leq\alpha_1,\beta_1\leq p_1+q_1=n_1,
&&\epsilon_{\alpha_1}=\left\{\begin{smallmatrix}1,&\alpha_1\leq p_1;\\-1,&\alpha_1>p_1,\end{smallmatrix}\right.
&&\epsilon_{\alpha}=\left\{\begin{smallmatrix}1,&n_1<\alpha\leq n_1+p-p_1;\\-1,&\alpha>n_1+p-p_1.\end{smallmatrix}\right.
\end{align*}

Let $e_0\in\im\Re\g_{-2}$, $\{e_{\alpha_1}\}\subset W$, $\{e_\alpha\}\subset\g_{-1,1}$ be a basis as described in Theorem \ref{regclass}, and identify $\{e_\alpha,\overline{e}_\alpha\}$ with the standard basis of $\mathbb{C}^{2n}$ as in \S\ref{basis} so that $\g_{0}$ may once again be interpreted as a subalgebra of the conformal symplectic algebra. In contrast to \eqref{ad_typeI}, basis vectors $e_{n+1}\in\g_{0,2}$ and $\overline{e}_{n+1}\in\g_{0,-2}$ for Type I are now represented
\begin{align*}
&\ad_{e_{n+1}}=\left[\begin{array}{cc}0& \delta_{n_1}\\0& 0\end{array}\right],
&\ad_{\overline{e}_{n+1}}=\left[\begin{array}{cc}0& 0\\\delta_{n_1}& 0\end{array}\right],
\end{align*}
where $\delta_{n_1}$ is the $n\times n$ diagonal matrix which is 1 in its first $n_1$ diagonal entries and zero elsewhere. For Type II, we subdivide into $p\times p$ blocks
\begin{align*}
&\ad_{e_{n+1}}=\left[\begin{array}{cccc}0&0&0&-\delta_{p_1}\\0&0&\delta_{p_1}&0\\0&0&0&0\\0&0&0&0\end{array}\right],
&\ad_{\overline{e}_{n+1}}=\left[\begin{array}{cccc}0&0&0&0\\0&0&0&0\\0&-\delta_{p_1}&0&0\\\delta_{p_1}&0&0&0\end{array}\right].
\end{align*}

Aside from the conformal scaling generator $\widehat{E}$ as in \eqref{conf_alg}, $\g_{0,0}$ is represented by matrices
\begin{align*}
&\left[\begin{array}{cccc}\epsilon B_W&0&0&0\\0&\epsilon B_Z&0&0\\0&0&-\epsilon B_W^t&0\\0&0&0&-\epsilon B_Z^t\end{array}\right],
&B_W\in\text{Mat}_{n_1\times n_1}\mathbb{C},
&&B_Z\in\text{Mat}_{(n-n_1)\times(n-n_1)}\mathbb{C},
\end{align*}
where the analog of condition \eqref{g00_rep} requires that $B_W$ is as in the strongly non-nilpotent regular case of Type $\mathrm{I}_{p_1,q_1}$ or $\mathrm{II}_{p_1}$. Therefore, the subalgebra of $\g^0$ comprised of $\g_{-2}\oplus W\oplus\overline{
W}\oplus \g_{0,2}\oplus\g_{0,-2}$ along with $\widehat{E}$ and derivations in $\g_{0,0}$ corresponding to matrices $B_W$ satisfying $\eqref{g00_rep}$ determines a strongly non-nilpotent regular symbol of Type $\mathrm{I}_{p_1,q_1}$ or $\mathrm{II}_{p_1}$, which we refer to as the \emph{underlying strongly non-nilpotent regular symbol} of $\g^0$.

Suppose $f\in\g_{1,1}$ and take $z\in Z$, $\overline{y}\in\g_{-1,-1}$ so that
\begin{align*}
f([z,\overline{y}])=[f(z),\overline{y}]+[z,f(\overline{y})].
\end{align*}
If $\overline{y}\in\overline{W}$, we get $0=[f(z),\overline{y}]+[z,f(\overline{y})]$ which shows that both terms are zero, because $f(\overline{y})\in\g_{0,0}\Rightarrow [z,f(\overline{y})]\in Z$ while $f(z)\in\g_{0,2}\Rightarrow [f(z),\overline{y}]\in W$. If $\overline{y}\in\overline{Z}$, $[f(z),\overline{y}]=0$. Thus, $f(z)\in\g_{0,2}$ acts trivially on all of $\g_{-1}$, whence $f(z)=0$.

Now let $y_1,y_2\in\g_{-1}$ such that $0\neq[y_1,y_2]=y\in\g_{-2}$. If $y_1\in Z$ and $y_2\in\overline{Z}$, then the fact that $f(y_1)=0$ implies
\begin{align*}
f(y)=[y_1,f(y_2)],
\end{align*}
which lies in $Z$ since $f(y_2)\in\g_{0,0}$. On the other hand, if $y_1\in W$ and $y_2\in\overline{W}$,
\begin{align*}
f(y)=[f(y_1),y_2]+[y_1,f(y_2)],
\end{align*}
which lies in $W$. Thus $f(y)=0$. The Lie bracket pairs $Z$ nondegenerately with $\overline{Z}$, so the fact that $f$ acts trivially on $Z$ and $\g_{-2}$ implies $f$ vanishes on $\overline{Z}$ as well.

Similar arguments show $f\in\g_{1,-1}$ acts trivially on $Z\oplus\overline{Z}$. In this way we see that $\g_1$ is a subspace of the first bigraded prolongation of the underlying strongly non-nilpotent regular symbol of $\g^0$. However, no nontrivial degree-1 derivations of such a strongly non-nilpotent regular symbol vanish on $\g_{-2}$, so it must be that $\g_1=0$, and Theorem \ref{regbuap} is proved. \hfill$\Box$

\subsection{Bigraded Prolongations of Nilpotent Regular Symbols.}

The results of this subsection -- summarized in Theorem \ref{regbuap0} at the end -- rely completely on analysis of the normal form constructed in Theorem \ref{regclass0}. The latter theorem implies that we have an $A$-stable splitting of $\g_{-1,1}$ into $\ell$-orthogonal subspaces of dimension $\leq 3$, where at least one of them has dimension 2 or 3, and provides adapted bases for each subspace in the direct sum. Although such splitting is not canonical in general, we fix one of them. We refer to $k$-dimensional subspaces of the splitting as $(A_k)$ summands and denote them by $\mathfrak{a}_k\subset\g_{-1,1}$, where $k\in\{1,2,3\}$. When working with adapted bases, the restriction of $A$ to an $(A_k)$ summand is represented by a matrix  $J_k$ and the restriction of the form $\ell$ by the matrix $\pm P_k$. It will be understood that $e_0\in\g_{-2,0}$ and $v\in\g_{0,2}$ are fixed so that normal forms of $A$ and $\ell$ are as in Theorem \ref{regclass0} for all of $\g_{-1,1}$.

We first consider $\g_{-1,1}$ which splits into a single $(A_3)$ summand and $n_1\geq0$ $(A_1)$ summands; i.e.,
\begin{align*}
\g_{-1,1}=\mathfrak{a}_3\oplus\bigoplus_{i=1}^{n_1}\mathfrak{a}_1^i,
\end{align*}
with the convention that $\bigoplus_{i=1}^{n_1}\mathfrak{a}_1^i=0$ when $n_1=0$. Take basis vectors
\begin{align*}
&e_0\in\g_{-2,0},
&v\in\g_{0,2},
&&e_1,e_2,e_3\in\mathfrak{a}_3,
&&w_i\in\mathfrak{a}^i_1,
\end{align*}
with nontrivial brackets
\begin{align*}
&[e_1,\overline{e}_3]=[e_3,\overline{e}_1]=[e_2,\overline{e}_2]=e_0,
&[v,\overline{e}_2]=e_1,[v,\overline{e}_3]=e_2,
&&[\overline{v},e_2]=\overline{e}_1,[\overline{v},e_3]=\overline{e}_2,\\
&[w_i,\overline{w}_i]=\epsilon_ie_0,
&\epsilon_i=\left\{\begin{smallmatrix}0&\text{ if }&n_1=0\\\pm1&\text{ if }&n_1>0\end{smallmatrix}\right..
\end{align*}

As above, fixing $e_0\in\g_{-2,0}$ allows us to interpret the Lie bracket on $\g_-$ as a symplectic form, imposing conformal symplectic structure on $\g_{-1}$ such that  $\g_{-1,\pm1}$ are Lagrangian. The algebra $\g_{0,0}$ is the subalgebra of the conformal symplectic algebra satisfying conditions \eqref{g001} and \eqref{g002}. Let $\hat \g_{0,0}$ be the intersection of $\g_{0,0}$ with the symplectic algebra. Denoting the dual basis of $\g_-$ with superscripts, we name the infinitesimal generator of the conformal scaling operator
\begin{align*}
\widehat{E}=2e_0\otimes e^0+\sum_{j=1}^3e_j\otimes e^j+\overline{e}_j\otimes \overline{e}^j+\sum_{i=1}^{n_1}w_i\otimes w^i+\overline{w}_i\otimes \overline{w}^i,
\end{align*}
so that $\g_{0,0}$ is the direct sum of $\hat \g_{0,0}$ and the line generated by $\widehat{E}$. The space of elements of  $\hat\g_{0,0}$ which preserve the subspace $\mathfrak{a}_3\oplus\overline{\mathfrak{a}}_3$ while acting trivially on the $(A_1)$ summands has the following basis:
\begin{align*}
&E_{13}=e_1\otimes e^1-\overline{e}_3\otimes \overline{e}^3,
&&\overline{E}_{13}=\overline{e}_1\otimes\overline{e}^1-e_3\otimes e^3,\\
&e_{13}=e_1\otimes e^3-\overline{e}_1\otimes \overline{e}^3,
&&e_{22}=e_2\otimes e^2-\overline{e}_2\otimes \overline{e}^2.
\end{align*}
This can be shown by direct verification of conditions \eqref{g001} and \eqref{g002}, though it also follows from the proof of Theorem \ref{regclass0}: with vectors $b_i$ in that proof denoted here by $e_{4-i}$, we have that $e_3$ is defined in $\mathfrak{a}_3$ up to scale and modulo $e_1$ (more precisely, modulo $\im e_1$ over $\mathbb R$, but for the prolongation we work over $\mathbb C$). This freedom is generated by $\overline E_{13}$ and $e_{13}$, respectively. Further, $e_2$ and $e_1$ are defined up to scale, with scaling operators generated by $e_{22}$ and $E_{13}$. Similarly, one can show that the elements of $\g_{0,0}$ that interchange vectors in $\mathfrak{a}_3$ and $\mathfrak{a}_1^i$ are spanned by
\begin{align*}
&W_i=\epsilon_ie_1\otimes w^i-\epsilon_i\overline{w}_i\otimes\overline{e}^3,
&\overline{W}_i=\epsilon_i\overline{e}_1\otimes\overline{w}^i-\epsilon_i w_i\otimes e^3.
\end{align*}
To complete our basis of $\g_{0,0}$, we would include those derivations from $\hat\g_{0,0}$ which preserve the subspace $\bigoplus_{i=1}^{n_1}(\mathfrak{a}_1^i\oplus\overline{\mathfrak{a}}_1^i)$ while vanishing on $\mathfrak{a}_3\oplus\overline{\mathfrak{a}}_3$.
The space of these derivations can be identified with $\mathfrak{gl}\Bigl(\bigoplus_{i=1}^{n_1}(\mathfrak{a}_1^i\oplus\overline{\mathfrak{a}}_1^i)\Bigr)$, because any element of $\mathfrak{gl}\Bigl(\bigoplus_{i=1}^{n_1}(\mathfrak{a}_1^i\oplus\overline{\mathfrak{a}}_1^i)\Bigr)$ can be uniquely extended to an element $\mathfrak{sp}(\g_{-1})$ vanishing on $\mathfrak{a}_3\oplus\overline{\mathfrak{a}}_3$. We will not bother to name a basis of these derivations, but only note that in total $\dim\g_{0,0}=5+2n_1+{n_1}^2$.

Now one calculates that $\g_{1,1}$ is spanned by
\begin{align*}
&E_1=2e_1\otimes e^0+2v\otimes e^2-2e_{13}\otimes\overline{e}^1+(\widehat{E}+e_{22}+\overline{E}_{13}-3E_{13})\otimes\overline{e}^3-2\sum_{i=1}^{n_1} W_i\otimes\overline{w}^i,
\end{align*}
so that $\g_{1,-1}$ is spanned by the conjugate
\begin{align*}
&\overline{E}_1=-2\overline{e}_1\otimes e^0+2\overline{v}\otimes \overline{e}^2+2e_{13}\otimes e^1+(\widehat{E}-e_{22}+E_{13}-3\overline{E}_{13})\otimes e^3-2\sum_{i=1}^{n_1} \overline{W}_i\otimes w^i,
\end{align*}
and higher bigraded prolongations are trivial.

Thus we see that in this class of examples, the prolongations are essentially the same as when $n_1=0$, which we refer to as the \emph{basic $(A_3)$ case}. Our next result shows that these are the only $\g_{-}$ with $(A_3)$ summands that admit nontrivial prolongations. Because the proof is somewhat tedious, we divide the statement into two parts.

\begin{prop}
\label{redbasis}
Suppose there is an $A$-stable, $\ell$-orthonormal splitting $\g_{-1,1}=\mathfrak{a}_3\oplus W$.
\begin{enumerate}
\item The restriction of any $f\in\g_{1}$ to $\g_{-2,0}\oplus\mathfrak{a}_3\oplus\overline{\mathfrak{a}}_3$ is an element of the first bigraded prolongation of the symbol in the basic $(A_3)$ case.

\item If $W$ contains an $(A_2)$ or $(A_3)$ summand, then $\g_1=0$.
\end{enumerate}
\end{prop}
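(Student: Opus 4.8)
For part (1), the key point is that because $\g_{-1,1}=\mathfrak{a}_3\oplus W$ is $A$-stable and $\ell$-orthogonal, the subspace $\g_-^{\mathrm{bas}}:=\g_{-2,0}\oplus\mathfrak{a}_3\oplus\overline{\mathfrak{a}}_3$ is a Heisenberg subalgebra carrying exactly the bracket relations of the basic $(A_3)$ symbol, with $[\mathfrak{a}_3,\overline W]=0=[W,\overline{\mathfrak{a}}_3]$ by \eqref{hermit}. Throughout I use that an $f\in\g_{1,1}$ is a bidegree-$(1,1)$ derivation, hence determined by the three blocks $f|_{\g_{-1,1}}\colon\g_{-1,1}\to\g_{0,2}$, $f|_{\g_{-1,-1}}\colon\g_{-1,-1}\to\g_{0,0}$, and $f|_{\g_{-2,0}}\colon\g_{-2,0}\to\g_{-1,1}$, subject to the Leibniz rule \eqref{firstalgTanaka} and the vanishing conditions \eqref{firstalgbigrad1}; since $\dim_{\mathbb C}\g_{0,2}=1$ with generator $v$, I write $f(y)=\mu(y)\,v$ for a linear functional $\mu$ on $\g_{-1,1}$ and put $\phi:=f(e_0)\in\g_{-1,1}$. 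First I would evaluate the Leibniz rule on the vanishing cross-brackets above to check that the $\ell$-orthogonal projection $\pi_{\mathfrak{a}_3}$ is compatible with the bracket, so that $\pi_{\mathfrak{a}_3}\circ f|_{\g_-^{\mathrm{bas}}}$ again satisfies \eqref{firstalgTanaka} and \eqref{firstalgbigrad1} for the basic $(A_3)$ symbol. As that linear system was solved in the text and has one-dimensional solution space $\langle E_1\rangle$, part (1) follows; in particular $\mu(e_2)=2\lambda$, $\mu(e_1)=\mu(e_3)=0$, and $\pi_{\mathfrak{a}_3}\phi=2\lambda e_1$ for a scalar $\lambda$, with the conjugate statement for $\g_{1,-1}$.

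The engine for part (2) is an explicit description of the auxiliary derivations: applying the Leibniz rule to $[e_j,\overline{e}_k]=\ell(e_j,e_k)e_0$ and using \eqref{anti-linear} yields, for all $y\in\g_{-1,1}$,
\begin{equation*}
f(\overline{e}_k)(y)=\mu(y)\,Ae_k-\ell(y,e_k)\,\phi ,
\end{equation*}
and likewise for $f(\overline{g}_j)$ over a second summand. Two consequences drive the argument. First, from the membership condition \eqref{g002}--\eqref{g00_rep} one checks that every $D\in\g_{0,0}$ satisfies $DA=AM$ with $M$ preserving $\mathrm{Im}\,A$, so $\g_{0,0}$ preserves the filtration $\mathrm{Im}\,A\supset\mathrm{Im}\,A^2$; since $e_1\in\mathrm{Im}\,A^2$ and $\mu(e_1)=0$, the displayed identity gives $\phi=-\ell(e_1,e_3)^{-1}f(\overline{e}_3)(e_1)\in\mathrm{Im}\,A^2$. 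Second, if $W$ contains an $(A_2)$ summand $\mathfrak{b}$ with top vector $g_2$ and $g_1=Ag_2$, the reduction of part (1) applied to $\mathfrak{b}$ gives $\pi_{\mathfrak{b}}\phi=2\kappa g_1$; but $g_1\notin\mathrm{Im}\,A^2$ forces $\kappa=0$, so $f$ vanishes on that summand.

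The hard part will be to kill the remaining coefficient $\lambda$ and to rule out a second $(A_3)$ summand; the image-filtration argument is not by itself enough here (the $\g_{0,0}$-block relation \eqref{g00_rep} forces $\lambda=0$ directly only for one sign of the relevant $\epsilon$). For the general case I would substitute the displayed formulas for $f(\overline{e}_k)$ and $f(\overline{g}_j)$ into the complete set of $\g_{0,0}$-membership constraints — the $\g_{-1,1}$-block relation \eqref{g00_rep}, the companion relation fixing the $\g_{-1,-1}$-action together with the conformal factor, the second vanishing condition $[[f,\overline{v}],\overline{v}]=0$ from \eqref{firstalgbigrad2}, and the requirement that every bracket presentation of $e_0$ yield the same $\phi$. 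The cross-brackets $[e_2,\overline{g}_j]=0=[g_2,\overline{e}_k]$ couple the two big summands, and the expectation — consistent with the authors' remark that the computation is tedious — is that these coupled constraints are incompatible with $\lambda\neq0$, forcing $\lambda=0$. Once $\mu\equiv0$ and $\phi=0$, the displayed identity gives $f(\overline{e}_k)=0$ and $f|_{\g_{-2,0}}=0$, whence $f=0$ and $\g_{1,1}=0$; applying $\bar{~}$ gives $\g_{1,-1}=0$, so $\g_1=0$.

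The step I expect to be the genuine obstacle is precisely this final bookkeeping: verifying that the $\g_{0,0}$-membership constraints on the auxiliary derivations $f(\overline{e}_k),f(\overline{g}_j)$ — once their $\g_{-1,-1}$-actions and conformal freedom are pinned down by the full system — are over-determined in the presence of two summands of dimension $\geq2$. The cleaner structural inputs (part (1), the explicit formula for $f(\overline e_k)$, and $\phi\in\mathrm{Im}\,A^2$) reduce everything to a finite, basis-level linear computation, but it is the interplay of the symplectic/conformal constraints with condition \eqref{firstalgbigrad2} across the two blocks that carries the weight of the proof.
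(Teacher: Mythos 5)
There is a genuine gap, concentrated in part (2) but originating in part (1). In part (1) you propose to verify that $\pi_{\mathfrak{a}_3}\circ f$ restricted to $\g_{-2,0}\oplus\mathfrak{a}_3\oplus\overline{\mathfrak{a}}_3$ satisfies the defining equations of the basic $(A_3)$ prolongation, but this is only announced (``I would evaluate \dots to check''), and even if carried out it proves something weaker than the proposition asserts: the claim is that the \emph{restriction} of $f$ is an element of the basic prolongation, which in particular requires that $fe_0$ has no component in $W$ and that each $f\overline{e}_j\in\g_{0,0}$ preserves $\mathfrak{a}_3\oplus\overline{\mathfrak{a}}_3$ and annihilates $W\oplus\overline{W}$. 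The paper's proof of part (1) consists precisely of establishing these no-leakage facts --- starting from $[\overline{e}_1,w]=0$ and $\overline{e}_1\in\ker(fw)$ to get $f\overline{e}_1|_W=0$, then $fe_0=[e_3,f\overline{e}_1]\in\mathfrak{a}_3$, then the $\lambda_2=\lambda_3=0$ computation showing $f\overline{e}_j|_W=0$ --- and these are exactly the inputs needed for part (2).

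For part (2) the decisive step is explicitly left unproved: you write that ``the expectation \dots is that these coupled constraints are incompatible with $\lambda\neq0$,'' deferring the elimination of $\lambda$, as well as the case of a second $(A_3)$ summand, to an unexecuted basis computation. That is the entire content of the assertion $\g_1=0$, so the proposal does not prove it. The paper's route is short once part (1) is in hand: for two $(A_3)$ summands, part (1) applied to each forces $fe_0$ into both summands, hence $fe_0=0$, and no nonzero element of the basic $(A_3)$ prolongation annihilates $e_0$; for an $(A_2)$ summand with basis $w_1,w_2$, one scales $f$ so that its restriction agrees with $E_1$, whence $[f\overline{e}_3,e_0]=2e_0$ and $[f\overline{e}_3,\overline{v}]=0$; the second relation combined with $f\overline{e}_3|_W=0$ gives $f\overline{e}_3(\overline{w}_1)=0$, and then $2\epsilon e_0=f\overline{e}_3([w_2,\overline{w}_1])=[w_2,f\overline{e}_3(\overline{w}_1)]=0$ is a contradiction. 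Note also that your intermediate claim that $\pi_{\mathfrak{b}}\phi=0$ implies ``$f$ vanishes on that summand'' is unjustified, since the prolongation in the pure $(A_2)$ case contains nonzero elements (the analogue of $E^j_2$) that annihilate $e_0$. Your identity $f(\overline{e}_k)(y)=\mu(y)\,Ae_k-\ell(y,e_k)\,\phi$ and the observation $\phi\in\mathrm{Im}\,A^2$ are correct and appealing, but they do not close the argument.
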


\begin{proof}
It suffices to consider $f\in\g_{1,1}$, as $\g_{1,-1}$ follows by complex conjugation. We use the same names of adapted basis vectors as before.

(1) That the splitting is $\ell$-orthogonal implies $[\overline{e}_1,w]=0$ for every $w\in W$, and since $\overline{e}_1$ lies in the kernel of $fw\in\g_{0,2}$, we see that $[f\overline{e}_1,w]=0$; i.e., $f\overline{e}_1\in\g_{0,0}$ vanishes on $W$. To see that it also vanishes on $\g_{-2,0}$, we first observe that $[f\overline{e}_1,e_2]=0$ by the same argument. Note that \eqref{g002} implies that derivations in $\g_{0,0}$ preserve the kernel of those in $\g_{0,\pm2}$, so expanding
\begin{align*}
0=f[\overline{e}_1,\overline{e}_2]=[f\overline{e}_1,\overline{e}_2]+[\overline{e}_1,f\overline{e}_2],
\end{align*}
it must be that $f\overline{e}_2\in\g_{0,0}$ keeps $\overline{e}_1$ in $\ker v$, whence $[f\overline{e}_1,\overline{e}_2]$ is in (the complex conjugate of) the $\ell$-orthogonal complement of $e_2$. Now we have $f\overline{e}_1(e_0)=f\overline{e}_1[e_2,\overline{e}_2]=0$, and since $f\overline{e}_1$ vanishes on both $W$ and $\g_{-2,0}$, it also vanishes on $\overline{W}$. In particular, $0=f\overline{e}_1[e_3,\overline{w}]=[f\overline{e}_1(e_3),\overline{w}]$ for every $\overline{w}\in\overline{W}$, which shows that $e_3$ remains in the $\ell$-orthogonal complement to $W$ under the action of $f\overline{e}_1$. That is, $[f\overline{e}_1,e_3]\in\mathfrak{a}_3$, yielding
\begin{align*}
fe_0=f[e_3,\overline{e}_1]=[e_3,f\overline{e}_1]\in\mathfrak{a}_3.
\end{align*}

$A$-stability of the splitting means $fy\in\g_{0,2}$ maps $\overline{\mathfrak{a}}_3$ into $\mathfrak{a}_3$ for every $y\in\g_{-1,1}$. Therefore, having shown that $f$ maps $\g_{-2,0}$ into $\g_{-2,0}\oplus\mathfrak{a}_3\oplus\overline{\mathfrak{a}}_3$, it remains to show that $f\overline{e}\in\g_{0,0}$ preserves both $\mathfrak{a}_3\subset\g_{-1,1}$ and $\overline{\mathfrak{a}}_3\subset\g_{-1,-1}$ for every $\overline{e}\in\overline{\mathfrak{a}}_3$. The former is almost immediate from $fe_0\in\mathfrak{a}_3$, as
\begin{align*}
\underbrace{f[e_i,\overline{e}_j]}_{\in\mathfrak{a}_3}=\underbrace{[fe_i,\overline{e}_j]}_{\in\mathfrak{a}_3}+[e_i,f\overline{e}_j].
\end{align*}
To see that $f\overline{e}_j$ also fixes $\overline{\mathfrak{a}}_3$, it is equivalent to prove that it vanishes on $W$. Indeed, if $f\overline{e}_j(w)=0$ for every $w\in W$, then $0=f\overline{e}_j[\overline{e}_i,w]=[f\overline{e}_j(\overline{e}_i),w]$ and $f\overline{e}_j$ leaves $\overline{e}_i$ in the $\ell$-orthogonal complement to $W$. Therefore, we can show that $f\overline{e}_j$ annihilates $W$ for $j=1,2,3$. The first case is immediate from the fact that $\overline{e}_1\in\ker fw$ by expanding $0=f[\overline{e}_1,w]$. For the remaining cases, note that
\begin{align*}
&0=f[\overline{e}_j,w]=[f\overline{e}_j,w]+\lambda_je_{j-1},
&&j=2,3,
\end{align*}
for some $\lambda_2,\lambda_3\in\mathbb{C}$, which is a consequence of the fact that $fw\in\g_{0,2}$ is some multiple of $v$. Expanding $0=f[\overline{e}_2,\overline{e}_3]$ will show $f\overline{e}_2(\overline{e}_3)=f\overline{e}_3(\overline{e}_2)$, so that we can compare
\begin{align*}
&0=f\overline{e}_2[w,\overline{e}_3]=-\lambda_2e_0+[w,f\overline{e}_2(\overline{e}_3)],
&0=f\overline{e}_3[w,\overline{e}_2]=-\lambda_3e_0+[w,f\overline{e}_3(\overline{e}_2)],
\end{align*}
and reveal $\lambda_2=\lambda_3$. If $W$ is nontrivial, then it contains some nonzero $w$ in the kernel of $\overline{v}$, and this kernel must be preserved by $f\overline{e}_3\in\g_{0,0}$, but we have seen that $[f\overline{e}_3,w]=-\lambda_3e_2$, so we conclude $\lambda_2=\lambda_3=0$.

(2) This is immediate if $W$ contains another $(A_3)$ component, as the arguments of part (1) applied to both $\mathfrak{a}_3$'s will show that $f\in\g_{1,1}$ must map $e_0$ into two different direct summands, and there is no nontrivial derivation in the basic $(A_3)$ case with $fe_0=0$. It remains to consider $\mathfrak{a}_2\subset W$, for which we take a basis $w_1,w_2$ with nontrivial brackets
\begin{align*}
&[w_1,\overline{w}_2]=\epsilon e_0=[w_2,\overline{w}_1] \quad(\epsilon=\pm1),
&[v,\overline{w}_2]=w_1, [\overline{v},w_2]=\overline{w}_1.
\end{align*}
Let us assume for contradiction that $f\in\g_{1,1}$ is nontrivial, and that it is scaled such that its restriction to $\g_{-2,0}\oplus\mathfrak{a}_3\oplus\overline{\mathfrak{a}}_3$ agrees with $E_1$ from the basic $(A_3)$ case. In particular,
\begin{align}\label{fe3}
&[f\overline{e}_3,e_0]=2e_0,
&[f\overline{e}_3,\overline{v}]=0.
\end{align}
the second bracket relation from \eqref{fe3} gives
\begin{align*}
0=[f\overline{e}_3,\overline{v}](w_2)=f\overline{e}_3(\overline{w}_1)-\overline{v}\circ f\overline{e}_3(w_2)=f\overline{e}_3(\overline{w}_1),
\end{align*}
where we have used the fact from the proof of part (1) that $f\overline{e}_3|_W=0$. However, using the first bracket relation of \eqref{fe3} we compute
$$2\epsilon e_0=f\overline{e}_3[w_2,\overline{w}_1]=[w_2,f\overline{e}_3(\overline{w}_1)]=0,$$
a contradiction. Hence, $f=0$.
\end{proof}

It remains to consider $\g_{-1,1}$ composed of $n_2\geq 1$ $(A_2)$ summands and $n_1\geq 0$ $(A_1)$ summands,
\begin{align*}
\g_{-1,1}=\Big(\bigoplus_{i=1}^{n_1}\mathfrak{a}_1^i\Big)\oplus\Big(\bigoplus_{j=1}^{n_2}\mathfrak{a}_2^j\Big),
\end{align*}
so we take basis vectors
\begin{align*}
&e_0\in\g_{-2,0},
&v\in\g_{0,2},
&&e^j_1,e^j_2\in\mathfrak{a}_2^j,
&&w_i\in\mathfrak{a}^i_1,
\end{align*}
with nontrivial brackets
\begin{align*}
&[e^j_1,\overline{e}^j_2]=[e_2^j,\overline{e}_1^j]=\epsilon_je_0 \quad(\epsilon_j=\pm1),
&[v,\overline{e}^j_2]=e_1^j, &&[\overline{v},e_2^j]=\overline{e}_1^j,\\
&[w_i,\overline{w}_i]=\varepsilon_{i}e_0,
&\varepsilon_{i}=\left\{\begin{smallmatrix}0&\text{ if }&n_1=0\\\pm1&\text{ if }&n_1>0\end{smallmatrix}\right..
\end{align*}

Dual basis vectors to $e_1^j,e_2^j$ will be denoted $(e_1^{j})^*,(e_2^{j})^*$, while the dual to $w_i$ will still be written $w^i$. As usual, we name
\begin{align*}
\widehat{E}=2e_0\otimes e^0+\sum_{j=1}^{n_2}e_1^j\otimes (e_1^{j})^*+\overline{e}_1^j\otimes (\overline{e}_1^{j})^*+e_2^j\otimes (e_2^{j})^*+\overline{e}_2^j\otimes (\overline{e}_2^{j})^*
+\sum_{i=1}^{n_1}w_i\otimes w^i+\overline{w}_i\otimes \overline{w}^i.
\end{align*}
Aside from $\widehat{E}$, derivations in $\g_{0,0}$ which act nontrivially on and preserve every $\mathfrak{a}_2^j$ are spanned by
\begin{align*}
&E=\sum_{j=1}^{n_2}e_1^j\otimes (e^j_1)^*-\overline{e}_2^j\otimes (\overline{e}^j_2)^*,
&\overline{E}=\sum_{j=1}^{n_2}\overline{e}_1^j\otimes (\overline{e}^j_1)^*-e_2^j\otimes (e^j_2)^*,
\end{align*}
and each individual $\mathfrak{a}_2^j$ is additionally preserved by
\begin{align*}
&e^j_{12}=e_1^j\otimes (e^j_2)^*-\overline{e}_1^j\otimes(\overline{e}^j_2)^*.
\end{align*}
If $n_2>1$ and $1\leq j_1< j_2\leq n_2$, derivations which interchange vectors in $\mathfrak{a}_2^{j_1}$ and $\mathfrak{a}_2^{j_2}$ can simply swap bases
\begin{align*}
E_{j_1,j_2}&=\epsilon_{j_1}e^{j_2}_1\otimes(e^{j_1}_1)^*-\epsilon_{j_2}e^{j_1}_1\otimes(e^{j_2}_1)^*
+\epsilon_{j_1}\overline{e}^{j_2}_2\otimes(\overline{e}^{j_1}_2)^*-\epsilon_{j_2}\overline{e}^{j_1}_2\otimes(\overline{e}^{j_2}_2)^*,
&&(E_{j_2,j_1}=-E_{j_1,j_2})\\
\overline{E}_{j_1,j_2}&=\epsilon_{j_1}\overline{e}^{j_2}_1\otimes(\overline{e}^{j_1}_1)^*-\epsilon_{j_2}\overline{e}^{j_1}_1\otimes(\overline{e}^{j_2}_1)^*
+\epsilon_{j_1}e^{j_2}_2\otimes(e^{j_1}_2)^*-\epsilon_{j_2}e^{j_1}_2\otimes(e^{j_2}_2)^*,
&&(\overline{E}_{j_2,j_1}=-\overline{E}_{j_1,j_2})
\end{align*}
or they can ``raise" each basis with $\ad_v$ and $\ad_{\overline{v}}$ as they switch,
\begin{align*}
&E^{j_1,j_2}=\epsilon_{j_1}e^{j_2}_1\otimes(e^{j_1}_2)^*-\epsilon_{j_2}\overline{e}^{j_1}_1\otimes(\overline{e}^{j_2}_2)^*,
&\overline{E}^{j_1,j_2}=\epsilon_{j_1}\overline{e}^{j_2}_1\otimes(\overline{e}^{j_1}_2)^*-\epsilon_{j_2}e^{j_1}_1\otimes(e^{j_2}_2)^*,
\end{align*}
which are also skew in $j_1,j_2$. If $n_1>0$, derivations in $\g_{0,0}$ which interchange vectors in $\mathfrak{a}_2^j$ and $\mathfrak{a}_1^i$ are spanned by
\begin{align*}
&W_i=\varepsilon_ie_1^j\otimes w^i-\epsilon_j\overline{w}_i\otimes(\overline{e}_2^j)^*,
&\overline{W}_i=\varepsilon_i\overline{e}_1^j\otimes \overline{w}^i-\epsilon_jw_i\otimes(e_2^j)^*.
\end{align*}
As before, our basis of $\g_{0,0}$ is completed with $(n_1)^2$ operators in the symplectic algebra of the subspace $\bigoplus_{i=1}^{n_1}(\mathfrak{a}_1^i\oplus\overline{\mathfrak{a}}_1^i)$ that preserve the maximal Lagrangian $n_1$-planes, and we do not write them.

For the first bigraded prolongation, we have $2n_2+n_1$ basis elements of $\g_{1,1}$,
\begin{align*}
E^j_1&=2\epsilon_je^j_1\otimes e^0+2v\otimes (e^j_1)^*-2e^j_{12}\otimes(\overline{e}^j_1)^*+(\widehat{E}-E+\overline{E})\otimes (\overline{e}^j_2)^*-2\sum_{i=1}^{n_1} W_i\otimes\overline{w}^i\\
&+2\epsilon_j\sum_{j'\neq j}\overline{E}^{j,j'}\otimes(\overline{e}^{j'}_1)^*+E_{j,j'}\otimes(\overline{e}^{j'}_2)^*,
\\
E^j_2&=v\otimes (e^j_2)^*+e^j_{12}\otimes(\overline{e}^j_2)^*+\sum_{j'\neq j}E^{j,j'}\otimes(\overline{e}^{j'}_{2})^*,
\\
E_i&=\varepsilon_iv\otimes w^i+\sum_{j=1}^{n_2}W_i\otimes(\overline{e}^j_2)^*
\end{align*}
and their conjugates provide a basis for $\g_{1,-1}$. The final nontrivial prolongation $\g_2$ is spanned by
\begin{align*}
E_0=\sum_{j=1}^{n_2}E^j_2\otimes(e^j_2)^*-\overline{E}^j_2\otimes(\overline{e}^j_2)^*.
\end{align*}

We summarize the results of this section with the following

\begin{thm}
\label{regbuap0}
Let $\g^0$ be a nilpotent regular CR symbol with $1$-dimensional Levi kernel, referring to Lemma \ref{regclasslem}, Definition \ref{strongreg}, and Theorem \ref{regclass0}. Suppose $\g_{-1,1}$ admits a normal form with $n_1$ $(A_1)$ summands, $n_2$ $(A_2)$ summands, and $n_3$ $(A_3)$ summands so that $\dim\g_-=1+2n_1+4n_2+6n_3$.
\begin{enumerate}
\item If $n_3>1$ or $n_3=1$ and $n_2>0$ then $\Re\mathfrak{U}_{\text{bigrad}}(\g^0)=\Re\g^0$.

\item If $n_3=1$ and $n_2=0$ then $\Re\mathfrak{U}_{\text{bigrad}}(\g^0)=\Re\g^1$ where $\dim\g_{0,0}=5+2n_1+(n_1)^2$ and $\dim\g_1=2$.

\item Otherwise, $n_3=0$ and $\Re\mathfrak{U}_{\text{bigrad}}(\g^0)=\Re\g^2$ where $\dim\g_{0,0}=3+n_2+4\binom{n_2}{2}+2n_1n_2+(n_1)^2$, $\dim\g_1=4n_2+2n_1$, and $\dim\g_2=1$.\hfill$\Box$
\end{enumerate}
\end{thm}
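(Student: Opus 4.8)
The plan is to assemble the three items from the explicit prolongation computations carried out above in this subsection, organized by the summand type of the normal form of Theorem \ref{regclass0}. Since a nilpotent regular symbol with one-dimensional Levi kernel is determined up to isomorphism by the $A$-stable, $\ell$-orthogonal decomposition of $\g_{-1,1}$ into $(A_1)$, $(A_2)$, and $(A_3)$ summands (at least one of dimension greater than one), the three items correspond precisely to the three qualitatively distinct configurations treated separately above: a single $(A_3)$ summand with no $(A_2)$ summand for item (2); an $(A_3)$ summand coexisting with a further $(A_2)$ or $(A_3)$ summand for item (1); and purely $(A_1)$ and $(A_2)$ summands (with $n_2\geq 1$) for item (3). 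The conclusion about $\Re\mathfrak{U}_{\text{bigrad}}(\g^0)$ in each case then follows by passing to the real form under the induced anti-linear involution.

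For item (2) I would cite the first explicit example: the basis of $\g_{0,0}$ verified against \eqref{g001}-\eqref{g002} gives $\dim\g_{0,0}=5+2n_1+(n_1)^2$, the generator $E_1$ spans $\g_{1,1}$ and its conjugate $\overline{E}_1$ spans $\g_{1,-1}$ so that $\dim\g_1=2$, and higher prolongations vanish, whence $\Re\mathfrak{U}_{\text{bigrad}}(\g^0)=\Re\g^1$. Item (1) is immediate from Proposition \ref{redbasis}(2): whenever an $(A_3)$ summand coexists with another $(A_2)$ or $(A_3)$ summand — exactly the case $n_3>1$ or ($n_3=1$ and $n_2>0$) — one gets $\g_1=0$, so the entire positively graded part is trivial and $\Re\mathfrak{U}_{\text{bigrad}}(\g^0)=\Re\g^0$.

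Item (3), where $n_3=0$ and necessarily $n_2\geq 1$, is read off from the second explicit computation. I would tally the basis of $\g_{0,0}$ — the operators $E,\overline{E}$, the $n_2$ operators $e^j_{12}$, the $4\binom{n_2}{2}$ inter-summand operators $E_{j_1,j_2},\overline{E}_{j_1,j_2},E^{j_1,j_2},\overline{E}^{j_1,j_2}$, the $2n_1n_2$ mixing operators $W_i,\overline{W}_i$, the $(n_1)^2$ symplectic operators on the $(A_1)$ block, and $\widehat{E}$ — to obtain $\dim\g_{0,0}=3+n_2+4\binom{n_2}{2}+2n_1n_2+(n_1)^2$; the $2n_2+n_1$ generators $E^j_1,E^j_2,E_i$ together with their conjugates give $\dim\g_1=4n_2+2n_1$; and the single generator $E_0$ spans $\g_2$, with $\g_i=0$ for $i\geq 3$, so $\Re\mathfrak{U}_{\text{bigrad}}(\g^0)=\Re\g^2$.

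The real content, as distinct from the dimension bookkeeping, lies in two verifications already carried out in the computations above. First, that the listed derivations exhaust $\g_{0,0}$ and each $\g_i$ — i.e. that the linear systems \eqref{firstalgbigrad1}-\eqref{firstalgbigrad2} for $\g_1$ and \eqref{arbalgTanaka} for $\g_{i\geq 2}$ have exactly the claimed solution spaces — which is routine but lengthy. The more delicate point, and the main obstacle, is the termination of the prolongation at the stated top weight: one must show that the recursive system \eqref{arbalgTanaka} admits no nonzero solution one degree higher, equivalently that any such derivation restricts to zero on $\g_-$ and is therefore trivial by nondegeneracy. The subtlest single mechanism is the coexistence obstruction underlying item (1), which forces the image of $e_0$ under a putative $f\in\g_{1,1}$ into two incompatible summands; this is exactly what Proposition \ref{redbasis} isolates and proves.
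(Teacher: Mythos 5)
Your proposal is correct and follows essentially the same route as the paper: Theorem \ref{regbuap0} is indeed just the summary of the three explicit prolongation computations in that subsection (single $(A_3)$ plus $(A_1)$'s for item (2), Proposition \ref{redbasis}(2) for item (1), and the $(A_2)$/$(A_1)$ configuration for item (3)), with the dimension counts and the termination of the prolongation verified exactly as you describe. Your tallies of the bases of $\g_{0,0}$, $\g_1$, and $\g_2$ in each case agree with the paper's.
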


Elementary arguments from Theorem \ref{regbuap0} and the calculation in the proof of Theorem \ref{regbuap} will show that among CR manifolds $M$ of a given dimension, the maximal dimension of the bigraded universal prolongation of a regular symbol with $1$-dimensional Levi kernel is $\tfrac{1}{4}(\dim M-1)^2+7$, and it is achieved for the nilpotent regular symbol with $n_2=1$ and $n_3=0$.


\section{Proof of Theorem \ref{maintheor}} \label{geometric_prolongation}

\subsection{Proof of Part (1): Geometric Prolongation}
\label{geom_prolong_part1}

Consider a regular symbol $\g^0$ whose bigraded universal algebraic prolongation $\mathfrak U_{\text{bigrad}}(\g^0)$ has the bigraded splitting \eqref{bguniv}, such that $\g_i=\bigoplus_{j\in\mathbb Z}\g_{i,j}$ consists of all elements with first weight $i$. Let $l$ be the nonnegative integer such that $\g_l\neq 0$ but $\g_{l+1}=0$. For instance, when $\dim_\mathbb{C}\g_{0,\pm2}=1$ Theorem \ref{regbuap} shows $l=2$ for strongly non-nilpotent regular symbols and $l=0$ for weakly non-nilpotent regular symbols.

As in the standard Tanaka theory (see Remark \ref{Chainrem} and set $\mu=2$), we will recursively construct a sequence of bundles
\begin{equation}
\label{bundles1}
P^{-1}=M\leftarrow P^0\leftarrow P^1\leftarrow P^2\leftarrow\cdots
\end{equation}
such that $P^0$ is a principal bundle whose structure group has Lie algebra $\g_{0,0}$ and for $i>0$, $P^i$ is a bundle over $P^{i-1}$ whose fibers are affine spaces with modeling vector space $\g_i$.
In each step of the inductive procedure, the construction of the bundle $P^{i+1}\rightarrow P^{i}$ produces a concomitant bundle $\Re P^{i+1}\rightarrow \Re P^{i}$. Here, $\Re P^0$ is a principal bundle whose structure group has Lie algebra isomorphic to the real part $\Re \g_{0,0}$ of $\g_{0,0}$, and for $i>0$ $\Re P^i$ is a bundle over $\Re P^{i-1}$ whose fibers are affine spaces with modeling vector space equal to the real part $\Re \g_i$ of $\g_i$ (see the discussion in the paragraph before Theorem \ref{maintheor}). The bundle $\Re P^{l+2}$ is endowed with a canonical frame -- a structure of absolute parallelism.

\subsubsection{Bigraded Frame Bundle $P^0$}\label{frame_bundle}

Let $x\in M$. Recalling definitions \eqref{g-}, we name the canonical quotient projections
\begin{align}\label{q-}
&q_{-1}:\mathbb{C}D_x\to \g_{-1}(x),
&q_{-2}:\mathbb{C}T_xM\to\g_{-2}(x).
\end{align}
Regarding the splitting $\mathbb{C}D=H\oplus\overline{H}$, we also name the summand projections
\begin{align}\label{ppm}
&p_+:\mathbb{C}D\to H,
&p_-:\mathbb{C}D\to\overline{H},
\end{align}
furnishing linear projections onto the bigraded components
\begin{align}\label{q-1pm1}
&q_{-1,1}=q_{-1}\circ p_+:\mathbb{C}D_x\to\g_{-1,1}(x),
&q_{-1,-1}=q_{-1}\circ p_-:\mathbb{C}D_x\to\g_{-1,-1}(x).
\end{align}
For the remaining bigraded components of the CR symbol algebra at $x$ (Definition \ref{CRsymboldef}) that also lie in $\m(x)$ (see \eqref{m}), recall
\begin{align*}
&\ad_{K_x}=\g_{0,2}(x),
&\ad_{\overline{K}_x}=\g_{0,-2}(x).
\end{align*}

Fix an abstract symbol (Definition \ref{absCRsymboldef}) $\g^0$, which is isomorphic to $\g^0(x)$ for every $x\in M$. For any vector subspace $\mathfrak{s}\subset\g^0$ defined as a sum of specified bigraded components of $\g^0$, we will denote by $I(\mathfrak{s})$ the indexing set of all biweights of the components defining $\mathfrak{s}$. For example, $I(\m)=\{(i,j)\ |\ (i,j)=(-2,0),(-1,\pm1),(0,\pm2)\}$.

We now define a bundle $\pi: P^0\to M$ whose fiber over $x\in M$ is comprised of all \emph{adapted frames}, or bigraded Lie algebra isomorphisms,
\begin{align}
\label{fiberP0}
  P^0_x=\left\{
\varphi_x:\g_-\longrightarrow\g_-(x) \left|\ \begin{array}{cc}
\varphi_x(\g_{i,j})=\g_{i,j}(x)&  (i,j)\in I(\g_-)\\
\varphi_x^{-1}\circ \g_{0,\pm2}(x)\circ\varphi_x=\g_{0,\pm2}&\\
\varphi_x([ y_1, y_2])=[\varphi_x( y_1),\varphi_x( y_2)] & y_1, y_2\in\g_-
\end{array}\right.\right\}.
\end{align}
The Lie algebra $\g_{0,0}\subset\g^0$ is tangent to the Lie group $G_{0,0}\subset\Aut(\g_-)$ of bigraded algebra isomorphisms of $\g_-$ whose adjoint action on $\mathfrak{der}(\g_-)$ preserves the
spaces
$\g_{0,\pm2}$. $P^0$ is a principal $ G_{0,0}$-bundle, where the right principal action $R_g:P^0\to P^0$ of $g\in G_{0,0}$ on each fiber is given by
\begin{align}\label{principal-action}
 R_g(\varphi_x)=\varphi_x\circ g:\g_-\to\g_-(x).
\end{align}
$\Re P^0\to M$ will denote the subbundle of $P^0$ whose fiber over $x$ consists of those $\varphi_x$ as in \eqref{fiberP0} which map $\Re \g_-$ to $\Re \g_-(x)$. $\Re P^0$ is a principal $ \Re G_{0,0}$-bundle (see Definition \ref{flatdef} and the comments preceding it).

The CR filtration of $TM$ induces a filtration on $T P^0$ via the inverse image of the pushforward $\pi_*$, so we name
\begin{align*}
&T^{-2}P^0=TP^0,
&T^{-1}P^0=\pi_*^{-1}(D),
&&T^0P^0=\pi_*^{-1}(\Re(K\oplus\overline{K})),
\end{align*}
with $\mathbb{C}T^{i}P^0$ being the inverse image of the corresponding complexification ($i=-2,-1,0$). For later use, we also name the canonical quotient projection
\begin{align}\label{Qmodzero}
Q:\mathbb{C}T^{-2}P^0\to\mathbb{C}T^{-2}P^0/\mathbb{C}T^{0}P^0.
\end{align}  
The complexified filters have subbundles
\begin{align*}
&T^{-1,1} P^0=(\pi_*)^{-1}(H)\subset\mathbb{C}T^{-1} P^0,
&&T^{-1,-1} P^0=(\pi_*)^{-1}(\overline{H})\subset\mathbb{C}T^{-1} P^0,\\
&T^{0,2} P^0=(\pi_*)^{-1}(K)\subset \mathbb{C}T^{0} P^0,
&&T^{0,-2} P^0=(\pi_*)^{-1}(\overline{K})\subset \mathbb{C}T^{0} P^0,
\end{align*}
\begin{align*}
 T^{0,0}  P^0=\ker\pi_*  \subset T^{0,\pm2}  P^0.
\end{align*}

$P^0$ is equipped with intrinsically defined \emph{soldering forms} taking values in $\g^0$. In the lowest graded degree, we have a $\mathbb{C}$-linear one-form defined at the frame $\varphi_x\in P^0$ using the quotient projection \eqref{q-},
\begin{align*}
&\theta_{-2}=\theta_{-2,0}:\mathbb{C}T^{-2}P^0\to\g_{-2};
&&\theta_{-2}|_{\varphi_x}=(\varphi_x)^{-1}\circ q_{-2}\circ \pi_*.
\end{align*}
We will use the same names for the soldering forms when descending to a quotient of their domain by any subbundle of their kernel. By its definition,
\begin{align*}
&\ker\theta_{-2}=\mathbb{C}T^{-1}P^0,
&\theta_{-2}:T^{-2}P^0/T^{-1}P^0\stackrel{\simeq}{\longrightarrow}\Re\g_{-2},
\end{align*}
where $\simeq$ indicates a linear isomorphism which extends by $\mathbb{C}$-linearity to the complexification. The remaining soldering forms are not true one-forms; i.e., they are not defined on all of $\mathbb{C}T P^0$, but only on individual filters. For instance, the quotient projection \eqref{q-} provides
\begin{align*}
&\theta_{-1}:\mathbb{C}T^{-1}P^0\to\g_{-1};
&\theta_{-1}|_{\varphi_x}=(\varphi_x)^{-1}\circ q_{-1}\circ \pi_*.
\end{align*}
In this case,
\begin{align*}
&\ker\theta_{-1}=\mathbb{C}T^0P^0,
&\theta_{-1}:T^{-1}P^0/T^{0}P^0\stackrel{\simeq}{\longrightarrow}\Re\g_{-1}.
\end{align*}
The definitions of the soldering forms, along with that of the bracket and the fact that $\varphi_x$ is an algebra isomorphism, ensure the following bracket commutation relation, known as ``regularity" in the study of parabolic geometries (cf. \cite[Ch.3]{capslovak}),
\begin{align}\label{g-reg}
&\theta_{-2}([Y_1,Y_2])=[\theta_{-1}(Y_1),\theta_{-1}(Y_2)],
&Y_1,Y_2\in\Gamma(\mathbb{C}T^{-1}P^0).
\end{align}

Incorporating the projections \eqref{q-1pm1} splits $\theta_{-1}$ into bigraded components,
\begin{align*}
&\theta_{-1,\pm1}:\mathbb{C}T^{-1}P^0\to\g_{-1,\pm1},
&\theta_{-1,\pm1}|_{\varphi_x}=(\varphi_x)^{-1}\circ q_{-1,\pm1}\circ\pi_*,
\end{align*}
which satisfy
\begin{align*}
&\ker\theta_{-1,\pm1}=T^{-1,\mp1}P^0,
&\theta_{-1,\pm1}:T^{-1,\pm1}P^0/T^{0,\pm2}P^0\stackrel{\simeq}{\longrightarrow}\g_{-1,\pm1},
\end{align*}
the latter being a $\mathbb{C}$-linear isomorphism. Composing the $\ad_K$, $\ad_{\overline{K}}$ operators with the projections \eqref{ppm},
\begin{align*}
&\theta_{0,\pm2}:\mathbb{C}T^0P^0\to\g_{0,\pm2},
&\theta_{0,\pm2}|_{\varphi_x}(v)=(\varphi_x)^{-1}\circ \ad_{p_\pm\circ\pi_*(v)}\circ\varphi_x,
&&v\in\mathbb{C}T_{\varphi_x}^0P^0,
\end{align*}
and we have
\begin{align*}
&\ker\theta_{0,\pm2}=T^{0,\mp2}P^0,
&\theta_{0,\pm 2}:T^{0,\pm2}P^0/T^{0,0}P^0\stackrel{\simeq}{\longrightarrow} \g_{0,\pm2}.
\end{align*}
By their definitions and \eqref{adv}, these satisfy a bigraded version of the regularity condition \eqref{g-reg},
\begin{align}\label{bigreg}
&\theta_{-1,\pm1}([V,Y])=[\theta_{0,2}(V),\theta_{-1,\mp1}(Y)],
&V\in\Gamma(T^{0,\pm2}P^0),
&&Y\in\Gamma(T^{-1,\mp1}P^0).
\end{align}

At this point, we have defined soldering forms taking values in each of the bigraded components of $\m$. They possess an important equivariance property with respect to the principal $G_{0,0}$ action \eqref{principal-action} on the fibers of $P^0$, which determines a diffeomorphism of $P^0$.

\begin{lem}\label{m-forms-equiv}
 Let $g\in G_{0,0}$ and $\varphi_x\in P^0$. For any $(i,j)\in I(\m)$,
\begin{align*}
R_g^*\theta_{i,j}=\Ad_{g^{-1}}\circ\theta_{i,j}
\end{align*}
\end{lem}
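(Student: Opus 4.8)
The plan is to compute the pullback $R_g^*\theta_{i,j}$ directly from the definitions of the soldering forms, exploiting the single geometric fact that $R_g$ covers the identity on $M$. Since $\pi\circ R_g=\pi$, differentiating gives $\pi_*\circ (R_g)_*=\pi_*$, and in particular $(R_g)_*$ preserves every filter $\mathbb{C}T^{i}P^0$ together with its bigraded refinements, so each (partial) soldering form remains defined after pullback. I would organize the verification according to the two shapes in which the soldering forms are built: the \emph{quotient-type} forms $\theta_{-2,0}$ and $\theta_{-1,\pm1}$, of the form $(\varphi_x)^{-1}\circ q\circ \pi_*$, and the \emph{conjugation-type} forms $\theta_{0,\pm2}$, of the form $(\varphi_x)^{-1}\circ\ad_{(\cdot)}\circ\varphi_x$.

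For a quotient-type form, evaluating at $\varphi_x$ and a tangent vector $w$ in the appropriate filter gives
\[
(R_g^*\theta_{i,j})|_{\varphi_x}(w)=\theta_{i,j}|_{\varphi_x\circ g}\big((R_g)_*w\big)=(\varphi_x\circ g)^{-1}\circ q\circ\pi_*\circ (R_g)_* w,
\]
and using $\pi_*\circ(R_g)_*=\pi_*$ together with $(\varphi_x\circ g)^{-1}=g^{-1}\circ(\varphi_x)^{-1}$ this collapses to $g^{-1}\circ\theta_{i,j}|_{\varphi_x}(w)$, where $g^{-1}$ acts on $\g_-$ as a graded automorphism. For the conjugation-type forms, the same substitution $\pi_*\circ(R_g)_*=\pi_*$ leaves the $\ad$-operator unchanged and produces a two-sided factor
\[
(R_g^*\theta_{0,\pm2})|_{\varphi_x}(v)=g^{-1}\circ\big((\varphi_x)^{-1}\circ\ad_{p_\pm\circ\pi_* v}\circ\varphi_x\big)\circ g=g^{-1}\circ\theta_{0,\pm2}|_{\varphi_x}(v)\circ g,
\]
i.e.\ conjugation of the derivation $\theta_{0,\pm2}|_{\varphi_x}(v)\in\g_{0,\pm2}$ by $g^{-1}$; the result again lands in $\g_{0,\pm2}$ precisely because $G_{0,0}$ was defined so that its conjugation action on $\mathfrak{der}(\g_-)$ preserves $\g_{0,\pm2}$.

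The one point requiring care---and the only genuine content beyond bookkeeping---is the identification of these two elementary operations with the single abstract operator $\Ad_{g^{-1}}$ appearing in the statement. Here I would invoke the realization $\g^0\subset\g_-\oplus\mathfrak{der}(\g_-)$ with $G_{0,0}\subset\Aut(\g_-)$: on the ideal $\g_-$ the adjoint action of $G_{0,0}$ is, via the semidirect-product structure $\g_-\rtimes\g_{0,0}$, exactly the natural graded-automorphism action $y\mapsto g^{-1}(y)$, whereas on $\g_{0,\pm2}\subset\mathfrak{der}(\g_-)$ it is exactly conjugation $X\mapsto g^{-1}\circ X\circ g$. Thus both displays read uniformly as $\Ad_{g^{-1}}\circ\theta_{i,j}$, completing the proof. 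I expect no real obstacle in the routine differentiation; the main thing to get right is this bookkeeping of the adjoint action across the distinct bigraded pieces, and verifying that the filters on which the partial one-forms live are genuinely respected by $(R_g)_*$.
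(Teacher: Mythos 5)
Your proposal is correct and follows essentially the same route as the paper's own proof: both exploit $\pi\circ R_g=\pi$ to get $\pi_*\circ(R_g)_*=\pi_*$, compute the pullback directly from the definitions, and treat the quotient-type forms (where $\Ad_{g^{-1}}$ acts as $g^{-1}$ on $\g_-$) and the conjugation-type forms $\theta_{0,\pm2}$ (where it acts as $X\mapsto g^{-1}\circ X\circ g$) separately. The only cosmetic difference is that you spell out the identification of these two actions with the single operator $\Ad_{g^{-1}}$, which the paper handles by simply restating the lemma in the two corresponding forms before computing.
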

\begin{proof}
We restate as follows. For $(i,j)\in I(\g_-)$ and $Y\in\Gamma(\mathbb{C}T^{i}P^0)$,
\begin{align*}
 \Bigl(R_g^*\bigl(\left.\theta_{i,j}\right|_{R_g(\varphi_x)}
 \bigr)\Bigr)(Y)=g^{-1}\left.\theta_{i,j}\right|_{\varphi_x}(Y),
\end{align*}
and for $V\in\Gamma(\mathbb{C}T^{0}P^0)$,
\begin{align*}
 \Bigl(R_g^*\bigr(\left.\theta_{0,\pm2}\right|_{R_g(\varphi_x)}\bigr)\Bigr)(V)=g^{-1}\left.\theta_{0,\pm2}\right|_{\varphi_x}(V)g,
\end{align*}
where $R_g^*$ in the left-hand sides of the last two equations acts from the space of forms at $R_g(\varphi_x)$ to the space of corresponding forms at $\varphi_x$.
First observe that $x=\pi\circ R_g(\varphi_x)=\pi(\varphi_x)\Rightarrow (\pi\circ R_g)_*=\pi_*$. By definition of $\theta_{-2}$ and $R_g(\varphi_x)$,
 \begin{align*}
  \left(R_g^*\left.\theta_{-2}\right|_{R_g(\varphi_x)}\right)(Y(\varphi_x))&=(\varphi_x\circ g)^{-1}\circ q_{-2}\circ\pi_*((R_g)_*Y)\\
  &=g^{-1}\circ(\varphi_x)^{-1}\circ q_{-2}\circ\pi_*(Y)\\
  &=g^{-1}\left.\theta_{-2}\right|_{\varphi_x}(Y).
 \end{align*}
The $(-1)$-graded forms may be treated similarly. The analogous arguments  for $\theta_{0,\pm2}$ are also immediate from their definition, viz.,
\begin{align*}
 \Bigl(R_g^*\bigr(\left.\theta_{0,\pm2}\right|_{R_g(\varphi_x)}\bigr)\Bigr)(V)&=	\bigl(R_g(\varphi_x)\bigr)^{-1}\circ \ad_{p_\pm\circ\pi_*\bigl((R_g)_*V\bigr)}\circ R_g(\varphi_x)\\
 &=g^{-1}\circ (\varphi_x)^{-1}\circ \ad_{p_\pm\circ\pi_*(V)}\circ\varphi_x\circ g\\
 &=g^{-1}\left.\theta_{0,\pm2}\right|_{\varphi_x}(V)g.
\end{align*}
\end{proof}

Recall that the vertical bundle of a principal bundle is trivialized by \emph{fundamental vector fields} which are associated to vectors in $\g_{0,0}$ by way of the principal action \eqref{principal-action} and the Lie algebra exponential map $\exp:\g_{0,0}\to G_{0,0}$. Specifically, to $ v\in\g_{0,0}$ we associate the vertical vector field
\begin{align}\label{G00_fund_vf}
\zeta_ v(\varphi_x)=\left.\frac{d}{dt}\right|_{t=0}R_{\exp(t v)}(\varphi_x).
\end{align}
Thus we obtain another $\g_0$-valued form
\begin{align*}
 \theta_{0,0}:T^{0,0} P^0\to\g_{0,0},
\end{align*}
which acts isomorphically by 
\begin{align*}
\theta_{0,0}(\zeta_ v)= v,\ \forall v\in\g_{0,0}.
\end{align*}

\begin{cor}\label{equiv-reg}
	Let $v\in\g_{0,0}$ and $Y\in\Gamma(T^{i,j}P^0)$ with $\theta_{i,j}(Y)= y\in\g_{i,j}$ where $(i,j)\in I(\g^0)$. Then
	\begin{align*}
	\theta_{i,j}([\zeta_v,Y])=[v,y].
	\end{align*}
\end{cor}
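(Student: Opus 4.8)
The plan is to obtain the Corollary as the infinitesimal form of Lemma \ref{m-forms-equiv}, by differentiating the equivariance identity at the identity of $G_{0,0}$ along the one-parameter subgroup $\exp(tv)$.

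First I would record the two structural facts that make the differentiation legitimate. By \eqref{G00_fund_vf} the fundamental field $\zeta_v$ has flow $R_{\exp(tv)}$. Since $\pi\circ R_g=\pi$ for every $g\in G_{0,0}$, we have $\pi_*\circ(R_g)_*=\pi_*$, so $(R_g)_*$ preserves each filter $\mathbb{C}T^iP^0$ and each refinement $T^{i,j}P^0$ (all of which are defined as $\pi_*^{-1}$ of a bundle on $M$, together with the $\ker\pi_*$ piece). Consequently, for $Y\in\Gamma(T^{i,j}P^0)$ the Lie derivative $[\zeta_v,Y]=\mathcal{L}_{\zeta_v}Y$ is again a section of $\mathbb{C}T^iP^0$, hence it lies in the domain of the (partial) soldering form $\theta_{i,j}$ and the left-hand side of the asserted identity is meaningful.

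Then, for $(i,j)\in I(\m)$, I would apply the Leibniz rule for the Lie derivative of the contraction $\theta_{i,j}(Y)$, which is the constant $\g_{i,j}$-valued function $y$ by hypothesis. Thus $0=\mathcal{L}_{\zeta_v}\bigl(\theta_{i,j}(Y)\bigr)=(\mathcal{L}_{\zeta_v}\theta_{i,j})(Y)+\theta_{i,j}([\zeta_v,Y])$. Because the flow of $\zeta_v$ preserves $\mathbb{C}T^iP^0$, the expression $\mathcal{L}_{\zeta_v}\theta_{i,j}=\left.\frac{d}{dt}\right|_{t=0}R_{\exp(tv)}^*\theta_{i,j}$ is well defined on that filter, and by Lemma \ref{m-forms-equiv} it equals $\left.\frac{d}{dt}\right|_{t=0}\Ad_{\exp(-tv)}\circ\theta_{i,j}=-\ad_v\circ\theta_{i,j}$. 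Evaluating at $Y$ gives $(\mathcal{L}_{\zeta_v}\theta_{i,j})(Y)=-[v,y]$, whence $\theta_{i,j}([\zeta_v,Y])=[v,y]$, as claimed.

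Finally, the case $(i,j)=(0,0)$ is not covered by Lemma \ref{m-forms-equiv} and I would treat it separately. Here $\theta_{0,0}$ is a fiberwise isomorphism of the vertical bundle $T^{0,0}P^0=\ker\pi_*$ onto $\g_{0,0}$, so the hypothesis $\theta_{0,0}(Y)=y$ forces $Y=\zeta_y$, the fundamental field of $y$. The identity then reduces to $\theta_{0,0}([\zeta_v,\zeta_y])=[v,y]$, which follows from the standard fact that $v\mapsto\zeta_v$ is a Lie algebra homomorphism for a right principal action, i.e. $[\zeta_v,\zeta_y]=\zeta_{[v,y]}$. I expect the only genuinely delicate point in the whole argument to be the bookkeeping of the middle step: since $\theta_{i,j}$ is a partial form defined only on the filter $\mathbb{C}T^iP^0$, one must first verify that the flow of $\zeta_v$ preserves this filter before the Lie derivative of $\theta_{i,j}$ and the Leibniz rule may be invoked. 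Everything else is a routine differentiation of Lemma \ref{m-forms-equiv} at $t=0$.
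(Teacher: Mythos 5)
Your proposal is correct and follows essentially the same route as the paper: both proofs differentiate the equivariance identity of Lemma \ref{m-forms-equiv} along the flow $R_{\exp(tv)}$ of $\zeta_v$ to get $\mathcal{L}_{\zeta_v}\theta_{i,j}=-\ad_v\circ\theta_{i,j}$, combine this with the vanishing of $\mathcal{L}_{\zeta_v}\bigl(\theta_{i,j}(Y)\bigr)$ (the paper phrases this via Cartan's formula and the interpretation of $\dd\theta_{i,j}(\zeta_v,Y)$, you via the Leibniz rule for the contraction — these are the same computation), and dispatch the $(i,j)=(0,0)$ case by the Lie algebra homomorphism property of fundamental vector fields. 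Your explicit verification that $(R_g)_*$ preserves the filters, so that $[\zeta_v,Y]$ lies in the domain of the partial form $\theta_{i,j}$, is a point the paper leaves implicit but is exactly the right precaution.
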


\begin{proof}
When $(i,j)=(0,0)$, $Y$ is also fundamental, and the result follows from the fact that the algebra of fundamental vector fields on a $G_{0,0}$-principal bundle is isomorphic to $\g_{0,0}$. 

Let $\mathcal L_{\zeta_ v}$ denote the Lie derivative along the vector field $\zeta_ v$ and recall Cartan's formula for the Lie derivative of a one-form $\alpha\in\Omega(P^0)$,
\begin{align*}
 (\mathcal{L}_{\zeta_v}\alpha)(Y)=\dd\alpha(\zeta_ v,Y)+\dd(\alpha(\zeta_ v))(Y).
\end{align*}
$\theta_{i,j}$ is not necessarily a true one-form, so the exterior derivative $\dd\theta_{i,j}$ does not make sense in general. However, since $\theta_{i,j}$ vanishes identically on $\zeta_ v\in\Gamma(T^{0,0}P^0)$ and takes constant value $ y\in\g_{i,j}$ along the vector field $Y\in\Gamma(T^{i,j}P^0)$, we can use the definition of the exterior derivative to interpret
\begin{align*}
 (\mathcal{L}_{\zeta_v}\theta_{i,j})(Y)=\dd\theta_{i,j}(\zeta_ v,Y)=-\theta_{i,j}([\zeta_ v,Y]).
\end{align*}

On the other hand, $R_{\exp(t v)}(\varphi_x)$ is the integral curve of $\zeta_ v$ passing through $\varphi_x$ when $t=0$, so we have the definition of the Lie derivative given by
\begin{align*}
 \mathcal{L}_{\zeta_v}\theta_{i,j}=\left.\frac{d}{dt}\right|_{t=0}R_{\exp(t v)}^*\theta_{i,j}.
\end{align*}
Applying equivariance as in Lemma \ref{m-forms-equiv} to the right-hand-side of this, we see
\begin{align*}
-\theta_{i,j}([\zeta_ v,Y])&= \left.\frac{d}{dt}\right|_{t=0}\Ad_{\exp(-t v)}\circ\theta_{i,j}(Y)\\
&=-\ad_ v\circ\theta_{i,j}(Y)\\
&=-[ v, y].\qedhere
\end{align*}
\end{proof}

\subsubsection{First Prolongation}\label{first_prolong}

We now begin the process of extending the soldering forms so that they are true one-forms. This cannot be done canonically, so we must incorporate into our construction the ambiguity of the choice of such extensions.

\begin{definition}\label{hatP1def}
$\hat{\pi}^1:\hat{P}^1\to P^0$ is the bundle whose fiber over $\varphi\in P^0$ is composed of maps
\begin{align*}
\varphi_1\in\Hom(\g_{-2},\mathbb{C}T^{-2}_{\varphi}P^0/\mathbb{C}T^{0}_{\varphi}P^0)\oplus\bigoplus_{(i,j)\in I(\g_{-1}\oplus\g_0)}\Hom(\g_{i,j},T^{i,j}_{\varphi}P^0),
\end{align*}
which satisfy
\begin{align*}
&\theta_{i,j}\circ\varphi_1|_{\g_{i,j}}=\mathbbm{1}_{\g_{i,j}};
&&(i,j)\in I(\g^0),
\end{align*}
where $\mathbbm{1}_{\g_{i,j}}$ is the identity map on $\g_{i,j}$. The subbundle $\Re\hat{P}^1\to P^0$ is determined by those $\varphi_1\in\hat{P}^1$ with
\begin{align*}
&\varphi_1(\overline{ y})=\overline{\varphi_1( y)},
&\forall y\in\g^0.
\end{align*}
\end{definition}

It is straightforward to see that the fiber over $\varphi\in P^0$ is nonempty: for a basis $\mathfrak{b}$ of $\g_{i,j}$, there is a basis of $T^{i,j}_\varphi P^0$ that $\theta_{i,j}$ maps to $\mathfrak{b}$, and one can simply take $\varphi_1|_{\g_{i,j}}$ to invert $\theta_{i,j}$ on $\mathfrak{b}$. In order to characterize the fibers of $\hat{\pi}^1:\hat{P}^1\to P^0$, let $\hat{\g}_1\subset\mathrm{End}(\g^0)$ be the vector space of $\mathbb{C}$-linear endomorphisms which have degree-1 on $\g_-$ and map
\begin{align}\label{hatg1zero}
&\g_{0,\pm2}\to\g_{0,0},
&\g_{0,0}\to 0.
\end{align}
$f\in\hat{\g}_1$ can be visualized by the diagram
\begin{displaymath}
\xymatrix@=0.25em{
	& & & & &  & & & &\g_{0,2}\ar@/^1pc/@{.>}[dd]& &\\
	& & & & & \g_{-1,1}\ar@{-->}[rrrrd] \ar[rrrru]& & & &\oplus& &\\
	f :& \g_{-2,0}\ar@{-->}[rrrrd]\ar[rrrru]& & & &\oplus& & & &\g_{0,0}\ar@{.>}[rr]& &0\\
	& & & & & \g_{-1,-1}\ar@{-->}[rrrrd] \ar[rrrru]& & & &\oplus& &\\
	& & & & & & & & &\g_{0,-2}\ar@/_1pc/@{.>}[uu]& &\\
}\end{displaymath}
so that the solid and dashed lines have bidegrees $(1,1)$ and $(1,-1)$, respectively. By definition of $\hat{\g}_1$, the image of $\varphi_1\circ f|_{\g_{i,j}}$ is in the kernel of $\theta_{i,j}$ for each $\varphi_1\in\hat{P}^1_\varphi$, so for every $f\in\hat{\g}_1$ we can define a map on the fibers 
\begin{align}\label{hatP1action}
&R_{\mathbbm{1}+f}:\hat{P}^1_\varphi\to\hat{P}^1_\varphi,
&R_{\mathbbm{1}+f}(\varphi_1)|_{\g_{i,j}}=\left\{\begin{array}{ll}
\varphi_1+Q\circ\varphi_1\circ f, &(i,j)=(-2,0), \\
\varphi_1+\varphi_1\circ f,& (i,j)\in I(\g_{-1}\oplus\g_0),\end{array}\right.
\end{align}
where we have used the canonical quotient projection \eqref{Qmodzero}. Further, to each $f\in\hat{\g}_1$ we 
associate the vector field defined point-wise at $\varphi_1\in\hat{P}^1$ by
\begin{align}\label{P1zeta}
\zeta_f(\varphi_1)=\left.\frac{d}{dt}\right|_{t=0}R_{\mathbbm{1}+tf}(\varphi_1).
\end{align}

\begin{prop}\label{hatP1fibers}
For every $\varphi_1$ in the fiber $\hat P^1_{\varphi}$ of the bundle $\hat{\pi}^1:\hat P^1\rightarrow P^0$ over the point  $\varphi\in P^0$, the maps $A_{\varphi_1}: \hat{\g}_1\rightarrow \hat{P}^1_{\varphi}$ and $T_{\varphi_1}:\hat{\g}_1\rightarrow T_{\varphi_1} \hat P^1_{\varphi}$ given by  $A_{\varphi_1}(f)=R_{\mathbbm{1}+f}(\varphi_1)$ and $T_{\varphi_1}(f)=\zeta_f(\varphi_1)$ are bijections. The analogous statement holds for the subbundle $\Re\hat{P}^1\rightarrow P^0$ with $\hat{\g}^1$ replaced by $\Re\hat{\g}_1$ and $A_{\varphi_1}$, $T_{\varphi_1}$ by their restrictions to $\Re\hat{\g}_1$.
\end{prop}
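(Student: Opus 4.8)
The plan is to exhibit each fiber $\hat P^1_\varphi$ as an affine space modeled on the complex vector space $\hat\g_1$, so that $A_{\varphi_1}$ is the affine identification with origin $\varphi_1$ and $T_{\varphi_1}$ is the resulting identification of the tangent space. Both maps are controlled by a single linear map. Writing out \eqref{hatP1action}--\eqref{P1zeta}, one has $R_{\mathbbm 1+tf}(\varphi_1)|_{\g_{i,j}}=\varphi_1|_{\g_{i,j}}+t\,(\varphi_1\circ f)|_{\g_{i,j}}$ (with $Q$ prepended when $(i,j)=(-2,0)$), which is affine-linear in $t$. Hence $A_{\varphi_1}(f)=\varphi_1+\Phi_{\varphi_1}(f)$ and $T_{\varphi_1}(f)=\Phi_{\varphi_1}(f)$, where $\Phi_{\varphi_1}\colon\hat\g_1\to\mathcal V$ is the linear map $f\mapsto\{(\varphi_1\circ f)|_{\g_{i,j}}\}$ and $\mathcal V$ is the space of tuples of linear maps $\psi_{i,j}$ (valued in $T^{i,j}P^0$, resp.\ in $\mathbb{C}T^{-2}P^0/\mathbb{C}T^0P^0$ for $(i,j)=(-2,0)$) satisfying $\theta_{i,j}\circ\psi_{i,j}=0$. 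The defining constraints of Definition \ref{hatP1def} show at once that $\hat P^1_\varphi$ is exactly the affine subspace $\varphi_1+\mathcal V$, whose tangent space at $\varphi_1$ is canonically $\mathcal V$. Thus it suffices to prove that $\Phi_{\varphi_1}$ is a linear isomorphism onto $\mathcal V$.

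First I would check that $\Phi_{\varphi_1}$ is well-defined, i.e.\ lands in $\mathcal V$: since $f$ raises the first weight by one on $\g_-$ and obeys \eqref{hatg1zero}, the image $(\varphi_1\circ f)(\g_{i,j})$ always lies in a deeper filter on which $\theta_{i,j}$ vanishes (for instance $\theta_{-1,\pm1}$ vanishes on $\mathbb{C}T^0P^0$ and $\theta_{0,\pm2}$ on $T^{0,0}P^0$). Injectivity is immediate: each $\varphi_1|_{\g_{i,j}}$ is injective because $\theta_{i,j}\circ\varphi_1|_{\g_{i,j}}=\mathbbm 1_{\g_{i,j}}$, and $Q\circ\varphi_1$ stays injective on $\g_{-1}$ because $\varphi_1(\g_{-1})$ meets $\mathbb{C}T^0P^0$ trivially (again by the soldering-form identity); so $\Phi_{\varphi_1}(f)=0$ forces $f=0$ component by component.

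The crux is surjectivity, which I would prove componentwise by identifying the kernel spaces via the soldering-form isomorphisms listed in \S\ref{frame_bundle}. Given $\psi\in\mathcal V$, on $\g_{-1,1}$ the condition $\theta_{-1,1}\circ\psi_{-1,1}=0$ confines the image to $\ker\theta_{-1,1}\cap T^{-1,1}P^0=T^{0,2}P^0$; then the isomorphisms $\theta_{0,2}\colon T^{0,2}P^0/T^{0,0}P^0\stackrel{\simeq}{\longrightarrow}\g_{0,2}$ and $\theta_{0,0}\colon T^{0,0}P^0\stackrel{\simeq}{\longrightarrow}\g_{0,0}$ let me write $\psi_{-1,1}=(\varphi_1\circ f)|_{\g_{-1,1}}$ for a unique $f(\g_{-1,1})\subset\g_{0,2}\oplus\g_{0,0}$, precisely the target allowed by the diagram defining $\hat\g_1$. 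The symmetric computation on $\g_{-1,-1}$ yields $f(\g_{-1,-1})\subset\g_{0,-2}\oplus\g_{0,0}$, on $\g_{0,\pm2}$ yields $f(\g_{0,\pm2})\subset\g_{0,0}$ via $\ker\theta_{0,\pm2}\cap T^{0,\pm2}P^0=T^{0,0}P^0$, and on $\g_{0,0}$ forces the component to vanish, since the constraint $\theta_{0,0}\circ\varphi_1|_{\g_{0,0}}=\mathbbm 1$ already pins $\varphi_1|_{\g_{0,0}}=\theta_{0,0}^{-1}$. The component I expect to demand the most care is $(i,j)=(-2,0)$: here the codomain is the quotient $\mathbb{C}T^{-2}P^0/\mathbb{C}T^0P^0$ and the projection $Q$ of \eqref{Qmodzero} intervenes, so I must first use $\theta_{-2}$ to place $\psi_{-2,0}$ in $\mathbb{C}T^{-1}P^0/\mathbb{C}T^0P^0\cong\g_{-1}$ and then $\theta_{-1}$ to recover a unique $f(\g_{-2,0})\subset\g_{-1,1}\oplus\g_{-1,-1}$. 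Assembling the pieces produces $f\in\hat\g_1$ with $\Phi_{\varphi_1}(f)=\psi$, which establishes surjectivity and hence the first assertion.

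The real statement follows by equivariance under conjugation. Every ingredient respects complex conjugation: the bigraded conjugation relations \eqref{conj} and the extension \eqref{conjext} to $\mathfrak{der}(\g_-)$, together with the reality of the CR filtration, ensure that $\Phi_{\varphi_1}$ intertwines conjugation on $\hat\g_1$ with conjugation on $\mathcal V$. Consequently $\Phi_{\varphi_1}$ restricts to an isomorphism $\Re\hat\g_1\stackrel{\simeq}{\longrightarrow}\Re\mathcal V$, and the restrictions of $A_{\varphi_1}$ and $T_{\varphi_1}$ to $\Re\hat\g_1$ are bijections onto the fiber $\Re\hat P^1_\varphi$ and its tangent space. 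The genuine content beyond bookkeeping is the identification of each $\ker\theta_{i,j}\cap T^{i,j}P^0$ with the $\varphi_1$-image of the bigraded target permitted by $\hat\g_1$, together with the careful treatment of the $(-2,0)$ quotient; the bigraded, non-Lie nature of $\g^0$ causes no difficulty at this first step, where everything is linear algebra dictated by the soldering-form isomorphisms.
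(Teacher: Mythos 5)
Your proposal is correct and follows essentially the same route as the paper: both arguments reduce to the componentwise identification of $\ker\theta_{i,j}$ inside each filter with the $\varphi_1$-image of the bigraded targets permitted by $\hat{\g}_1$, with the $(-2,0)$ component handled through the quotient projection $Q$, and the real case by conjugation-equivariance. Your packaging via the linear map $\Phi_{\varphi_1}$ onto the direction space $\mathcal{V}$ is just a reorganization of the paper's computation of the difference $\tilde{\varphi}_1-\varphi_1$; only note that, as the paper's subsequent remark stresses, the resulting affine structure is modeled on $\mathcal{V}$ with the identification $\Phi_{\varphi_1}$ depending on the base point, not on $\hat{\g}_1$ via a group action.
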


\begin{proof}
Injectivity of $A_{\varphi_1}$ follows by Definition \ref{hatP1def} and the definition \eqref{hatP1action} of $R_{\mathbbm{1}+f}$. To prove surjectivity, let $\tilde{\varphi}_1$ be any other point in the same fiber $\hat P^1_{\varphi}$, and recall that both $\varphi_1$ and $\tilde\varphi_1$ are maps with domain $\g^0$. Consider their difference as maps restricted to each bigraded component of $\g^0$, beginning with $\g_{-2,0}$,
\begin{align*}
0=\mathbbm{1}_{\g_{-2,0}}-\mathbbm{1}_{\g_{-2,0}}=\theta_{-2,0}\circ(\tilde{\varphi}_1-\varphi_1)|_{\g_{-2,0}},
\end{align*}
so $(\tilde{\varphi}_1-\varphi_1)|_{\g_{-2,0}}$ takes values in $\ker\theta_{-2}$, i.e.,
\begin{align*}
(\tilde{\varphi}_1-\varphi_1)|_{\g_{-2,0}}:\g_{-2,0}\to \mathbb{C}T_\varphi^{-1}P^0/\mathbb{C}T_\varphi^{0}P^0.
\end{align*}
We compose with $\theta_{-1}$ to define a linear map
\begin{align*}
 \theta_{-1}\circ (\tilde{\varphi}_1-\varphi_1)|_{\g_{-2,0}}= f^1_{-2,0}:\g_{-2,0}\to\g_{-1}.
\end{align*}
Taking into account the splitting of $\theta_{-1}$ into $\theta_{-1,\pm1}$, we identify bigraded components
\begin{displaymath}
 \xymatrix@=0.5em{ & & &\g_{-1,1}\\ f^1_{-2,0}:  &\g_{-2,0}\ar[rru]^{ f _{-2,0}^{1,1}}\ar@{-->}[rrd]_{ f_{-2,0}^{1,-1}}& &\oplus\\ & & &\g_{-1,-1}}.
\end{displaymath}
Since $\theta_{-1}$ is an isomorphism on $\mathbb{C}T_\varphi^{-1}P^0/\mathbb{C}T_\varphi^{0}P^0$ and $\varphi|_{\g_{-1,\pm1}}$ inverts $\theta_{-1,\pm1}$,
\begin{align*}
 (\tilde{\varphi}_1-\varphi_1)|_{\g_{-2,0}}=Q\circ(\varphi_1|_{\g_{-1,1}}\circ f^{1,1}_{-2,0}+\varphi_1|_{\g_{-1,-1}}\circ f^{1,-1}_{-2,0}),
\end{align*}
or more succinctly,
\begin{align*}
 \tilde{\varphi}_1|_{\g_{-2,0}}=\varphi_1|_{\g_{-2,0}}+Q\circ\varphi_1\circ f_{-2,0}.
\end{align*}

Moving on, we adduce Definition \ref{hatP1def} again to write
\begin{align*}
 0=\theta_{-1,\pm1}((\tilde{\varphi}_1-\varphi_1)|_{\g_{-1,\pm1}})\Longrightarrow (\tilde{\varphi}_1-\varphi_1)|_{\g_{-1,\pm1}}:\g_{-1,\pm1}\to T_\varphi^{0,\pm2}P^0.
\end{align*}
Composing the latter with $\theta_{0,\pm2}$ determines $f^1_{-1,\pm1}:\g_{-1,\pm1}\to \g_{0,\pm2}$ with components
\begin{displaymath}\xymatrix@=0.5em{
& & & & & &\g_{0,2} &\\
\g_{-1,1}\ar[rrrrrru]_{f_{-1,1}^{1,1}}& & & & & &\oplus &\\
\oplus& & & & & & \g_{0,0} &\\
\g_{-1,-1}\ar@{-->}[rrrrrrd]^{f_{-1,-1}^{1,-1}}& & & & & &\oplus &\\
 & & & & & &\g_{0,-2}& \\
}.
\end{displaymath}
By its definition, the map
\begin{align*}
 (\tilde{\varphi}_1-\varphi_1)|_{\g_{-1,\pm1}}-\varphi_1|_{\g_{0,\pm2}}\circ f^1_{-1,\pm1}:\g_{-1,\pm1}\to\mathbb{C}T_\varphi^{-1}P^0
\end{align*}
takes values in the kernel of $\theta_{0,\pm2}$, so evaluating the isomorphism $\theta_{0,0}$ on its image yields
\begin{displaymath}\xymatrix@=0.5em{
 & & & & & &\g_{0,2} &\\
\g_{-1,1}\ar@{-->}[rrrrrrd]^{f_{-1,1}^{1,-1}}& & & & & &\oplus &\\
\oplus& & & & & & \g_{0,0} &\\
\g_{-1,-1}\ar[rrrrrru]_{f_{-1,-1}^{1,1}}& & & & & &\oplus &\\
 & & & & & &\g_{0,-2}& \\
}
\end{displaymath}

Next we have that $(\tilde{\varphi}_1-\varphi_1)|_{\g_{0,\pm2}}$ takes values in $\ker\theta_{0,\pm2}=T_\varphi^{0,0}P^0$, so this difference factors through the isomorphism $\varphi_1|_{\g_{0,0}}$ by a linear map
\begin{align*}
 f^{0,\mp2}_{0,\pm2}:\g_{0,\pm2}\to\g_{0,0}.
\end{align*}
Note that $\tilde{\varphi}_1|_{\g_{0,0}}=\varphi_1|_{\g_{0,0}}=(\theta_{0,0})^{-1}$, so the last component of $f:\g^0\to\g^0$ is the trivial map $f:\g_{0,0}\to 0$ as in \eqref{hatg1zero}. This concludes the proof of surjectivity and therefore bijectivity of $A_{\varphi_1}$, which immediately implies that $T_{\varphi_1}$ is bijective.

Finally, we observe that $\varphi_1,\tilde{\varphi}_1\in\Re\hat{P}^1$ with $\tilde{\varphi}_1=\varphi_1+\varphi_1\circ f$ implies $f(\overline{ y})=\overline{f( y)}$, so $\overline{f}=f$ as defined in \eqref{conjext} and $f\in\Re\hat{\g}_1$.
\end{proof}

\begin{rem}
Note that the maps \eqref{hatP1action} do not determine a group action on $\hat{P}^1$ because in general, elements of $\hat{\g}_1$ do not have degree $1$ by \eqref{hatg1zero}. So, despite the fact that each fiber of $\hat P^1$ is canonically identified with $\hat{\g}_1$ (after fixing one point $\varphi_1$ in it) via the bijection $A_{\varphi_1}$, this fiber is not an affine space modeled on $\hat{\g}_1$, and the bundle $\hat \pi^1:\hat P^1\rightarrow P^0$ is neither affine nor a principal bundle. Nevertheless, the tangent space to the fiber of $\hat P^1$ at every point $\varphi_1$ can be canonically identified with $\hat{\g}_1$ via the map $T_{\varphi_1}$, so the vector fields \eqref{P1zeta} play a role similar to  that of fundamental vector fields on a principal bundle. 
\end{rem}
The bundle $\hat{\pi}^1:\hat{P}^1\to P^0$ inherits a filtration as before,
\begin{align*}
&\mathbb{C}T^i\hat{P}^1=(\hat{\pi}^1_*)^{-1}(\mathbb{C}T^iP^0),
&T^{i,j}\hat{P}^1=(\hat{\pi}^1_*)^{-1}(T^{i,j}P^0),
&&(i,j)\in I(\g^0),
\end{align*}
where each subbundle contains the vertical bundle
\begin{equation}
\label{vert1}
\mathbb{C}T^1\hat{P}^1=\ker\hat{\pi}^1_*.
\end{equation}
$\hat{P}^1$ also admits tautologically defined, $\g^0$-valued soldering forms which may be thought of as extending those on $P^0$. In particular, the forms of a given degree have ``graduated" in terms of the filters on which they are defined.

\begin{prop}\label{hatP1forms}
There exist intrinsically defined forms
\begin{align*}
&\hat{\theta}^1_{-2}:\mathbb{C}T^{-2}\hat{P}^1\to\g_{-2},
&\hat{\theta}^1_{-1}:\mathbb{C}T^{-2}\hat{P}^1\to\g_{-1},
&&\hat{\theta}^1_0:\mathbb{C}T^{-1}\hat{P}^1\to\g_{0},
\end{align*}
with $\g_{i,j}$-valued bigraded components $\hat{\theta}^1_{i,j}$ which restrict to give the pullback of the soldering forms on $P^0$,
\begin{align*}
&\hat{\theta}^1_{0,0}|_{T^{0,0}\hat{P}^1}=(\hat{\pi}^1)^*\theta_{0,0}
&\hat{\theta}^1_{i,j}|_{\mathbb{C}T^i\hat{P}^1}=(\hat{\pi}^1)^*\theta_{i,j}
&&(i,j)\in I(\m).
\end{align*}
Furthermore, if $\varphi_1,\ \tilde{\varphi}_1\in\hat{P}^1$ are two elements in the fiber over $\varphi\in P^0$ which are related by $\tilde{\varphi}_1=R_{\mathbbm{1}+f}(\varphi_1)$ for $f\in\hat{\g}_1$ as in Proposition \ref{hatP1fibers}, then
\begin{align*}
(R_{\mathbbm{1}+f})^*\hat{\theta}^1_{-2}|_{\tilde{\varphi}_1}&=\hat{\theta}^1_{-2}|_{\varphi_1},
&&(R_{\mathbbm{1}+f})^*\hat{\theta}^1_{-1}|_{\tilde{\varphi}_1}=(\hat{\theta}^1_{-1}-f\circ\hat{\theta}^1_{-2})|_{\varphi_1},\\
(R_{\mathbbm{1}+f})^*\hat{\theta}^1_{0,\pm2}|_{\tilde{\varphi}_1}&=(\hat{\theta}^1_{0,\pm2}-f\circ\hat{\theta}^1_{-1})|_{\varphi_1},
&&(R_{\mathbbm{1}+f})^*\hat{\theta}^1_{0,0}|_{\tilde{\varphi}_1}=(\hat{\theta}^1_{0,0}-(f-f|_{\g_0}\circ f)\circ\hat{\theta}^1_{-1}-f\circ \hat{\theta}^1_{0,\pm2})|_{\varphi_1},
\end{align*}
so that the zero-graded form transforms according to
\begin{align*}
(R_{\mathbbm{1}+f})^*\hat{\theta}^1_{0}|_{\tilde{\varphi}_1}&=(\hat{\theta}^1_{0}-(f-f\circ  f)\circ\hat{\theta}^1_{-1}-f\circ\hat{\theta}^1_{0})|_{\varphi_1}.
\end{align*}
\end{prop}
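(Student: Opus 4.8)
The plan is to define each soldering form $\hat\theta^1_{i,j}$ \emph{tautologically} at a point $\varphi_1\in\hat P^1$, using $\varphi_1$ itself as a splitting of the filtration of $T_\varphi P^0$, and then to read the transformation laws off these pointwise formulas together with the fact that $R_{\mathbbm 1+f}$ preserves the fibers of $\hat\pi^1$. First I would build the forms recursively in order of increasing weight. Set $\hat\theta^1_{-2}|_{\varphi_1}=(\hat\pi^1)^*\theta_{-2}$, which needs no correction since $\theta_{-2}$ is already a genuine one-form. For arbitrary $Y$ put
$$\hat\theta^1_{-1}|_{\varphi_1}(Y)=\theta_{-1}\Bigl(Q(\hat\pi^1_*Y)-\varphi_1\bigl(\hat\theta^1_{-2}|_{\varphi_1}(Y)\bigr)\Bigr);$$
subtracting the $\g_{-2,0}$-direction singled out by $\varphi_1$ forces the argument into $\mathbb CT^{-1}_\varphi P^0/\mathbb CT^0_\varphi P^0$, where $\theta_{-1}$ descends, so $\theta_{-1}$ is now extended from the filter $\mathbb CT^{-1}$ to all of $\mathbb CT^{-2}\hat P^1$. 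Continuing one weight higher, for $Y\in\mathbb CT^{-1}\hat P^1$ I would set
$$\hat\theta^1_{0,\pm2}|_{\varphi_1}(Y)=\theta_{0,\pm2}\Bigl(\hat\pi^1_*Y-\varphi_1\bigl(\hat\theta^1_{-1}|_{\varphi_1}(Y)\bigr)\Bigr),$$
$$\hat\theta^1_{0,0}|_{\varphi_1}(Y)=\theta_{0,0}\Bigl(\hat\pi^1_*Y-\varphi_1\bigl(\hat\theta^1_{-1}|_{\varphi_1}(Y)\bigr)-\varphi_1\bigl(\hat\theta^1_{0,2}|_{\varphi_1}(Y)+\hat\theta^1_{0,-2}|_{\varphi_1}(Y)\bigr)\Bigr).$$

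Well-definedness is checked by applying $\theta_{-1}$, then $\theta_{0,2}$ and $\theta_{0,-2}$, to each subtracted argument and invoking the defining relations $\theta_{i,j}\circ\varphi_1=\mathbbm 1_{\g_{i,j}}$ of Definition \ref{hatP1def} together with the fact that $\varphi_1$ annihilates each wrong bigraded component under $\theta_{i,j}$; this shows the arguments lie respectively in $\ker\theta_{-1}=\mathbb CT^0$ and in $\ker\theta_{0,2}\cap\ker\theta_{0,-2}=T^{0,0}$, where the relevant soldering form is defined. Smooth dependence on $\varphi_1$ is clear. The restriction properties are then immediate: on the filter $\mathbb CT^i\hat P^1$ (respectively $T^{0,0}\hat P^1$) attached to $(i,j)\in I(\m)$, every correction term carries a factor $\hat\theta^1_{k,l}(Y)$ of strictly lower weight, and these vanish on that filter, so $\hat\theta^1_{i,j}$ reduces there to $(\hat\pi^1)^*\theta_{i,j}$.

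For the transformation laws the structural point is that $R_{\mathbbm 1+f}$ maps each fiber of $\hat\pi^1$ into itself, whence $\hat\pi^1\circ R_{\mathbbm 1+f}=\hat\pi^1$ and $\hat\pi^1_*\circ(R_{\mathbbm 1+f})_*=\hat\pi^1_*$. Thus in evaluating $\hat\theta^1_{i,j}|_{\tilde\varphi_1}\bigl((R_{\mathbbm 1+f})_*Y\bigr)$ the pushed-forward quantities $\hat\pi^1_*Y$ and $Q(\hat\pi^1_*Y)$ are unchanged, and the only new contributions arise from replacing $\varphi_1$ by $\tilde\varphi_1=\varphi_1+Q\circ\varphi_1\circ f$ on $\g_{-2,0}$ and $\tilde\varphi_1=\varphi_1+\varphi_1\circ f$ elsewhere, as in \eqref{hatP1action}. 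These extra $\varphi_1\circ f$ terms are again evaluated by $\theta_{i,j}\circ\varphi_1=\mathbbm 1$. For $\hat\theta^1_{-2}$ this yields invariance; for $\hat\theta^1_{-1}$ it produces the single correction $-f\circ\hat\theta^1_{-2}$, because $f$ raises weight by one and $f(\g_{-2,0})\subset\g_{-1}$; for $\hat\theta^1_{0,\pm2}$ it produces $-f\circ\hat\theta^1_{-1}$, once one notes that $\theta_{0,\pm2}$ retains only the $\g_{0,\pm2}$-part of $f(\hat\theta^1_{-1}(Y))$.

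The one genuinely delicate computation, and the step I expect to be the main obstacle, is the law for $\hat\theta^1_{0,0}$, where the quadratic term $f\circ f$ enters. When computing $\hat\theta^1_{0,0}|_{\tilde\varphi_1}$ one must subtract $\tilde\varphi_1\bigl(\hat\theta^1_{0,\pm2}|_{\tilde\varphi_1}(\cdots)\bigr)$, but $\hat\theta^1_{0,\pm2}|_{\tilde\varphi_1}$ has \emph{already} acquired the correction $-f\circ\hat\theta^1_{-1}$; applying the second factor $\varphi_1\circ f$ coming from $\tilde\varphi_1$ to this correction produces the cross term $\varphi_1\circ f\circ f\circ\hat\theta^1_{-1}$, and since $f$ carries $\g_{0,\pm2}$ into $\g_{0,0}$ this is precisely the source of the $f\circ f$ (equivalently $f|_{\g_0}\circ f$) summand. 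The remaining work is careful bigraded bookkeeping: tracking which portion of $f(\g_{-1,\pm1})$ lands in $\g_{0,\pm2}$ versus $\g_{0,0}$, recalling $f(\g_{0,0})=0$ from \eqref{hatg1zero}, and using that $\theta_{0,0}$ annihilates the images $\varphi_1(\g_{0,\pm2})$; reassembling the three component identities then yields the stated law for $\hat\theta^1_0=\hat\theta^1_{0,2}\oplus\hat\theta^1_{0,0}\oplus\hat\theta^1_{0,-2}$. Finally, that the whole construction restricts to $\Re\hat P^1$, with $\overline{\hat\theta^1_{i,j}}=\hat\theta^1_{i,-j}$, follows from the conjugation-equivariance $\varphi_1(\overline y)=\overline{\varphi_1(y)}$ built into $\Re\hat P^1$, the compatibility \eqref{conjext}, and the fact that $Q$ commutes with conjugation.
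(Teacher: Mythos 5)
Your proposal is correct and follows essentially the same route as the paper: the same tautological pointwise definitions of $\hat{\theta}^1_{-1}$, $\hat{\theta}^1_{0,\pm2}$, $\hat{\theta}^1_{0,0}$ obtained by subtracting the $\varphi_1$-images of the lower-weight values, the same well-definedness checks via $\theta_{i,j}\circ\varphi_1=\mathbbm{1}_{\g_{i,j}}$, and the same derivation of the transformation laws from $\hat{\pi}^1\circ R_{\mathbbm{1}+f}=\hat{\pi}^1$ together with the substitution $\tilde{\varphi}_1=\varphi_1+\varphi_1\circ f$. You also correctly locate the origin of the quadratic $f|_{\g_0}\circ f$ term in the $\hat{\theta}^1_{0,0}$ law, which is exactly the cross term the paper computes.
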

\begin{proof}
The first soldering form we define will satisfy all of the hypotheses immediately,
\begin{align*}
\hat{\theta}^1_{-2}=(\hat{\pi}^1)^*\theta_{-2}.
\end{align*}
Note that the transformation rule follows from $\hat{\pi}\circ R_{\mathbbm{1}+f}=\hat{\pi}\Rightarrow \hat{\pi}_*\circ (R_{\mathbbm{1}+f})_*=\hat{\pi}_*$, a fact we'll tacitly use when addressing the remaining forms.

Let $Y\in\mathbb{C}T_{\varphi_1}\hat{P}^1$ so that $\hat{\pi}^1_*(Y)\in\mathbb{C}T_\varphi^{-2}P^0$ and recall the canonical quotient projection \eqref{Qmodzero}. By Definition \ref{hatP1def},
\begin{align*}
0=\theta_{-2}(Q\circ\hat{\pi}^1_*(Y)-\varphi|_{\g_{-2}}\circ\theta_{-2}\circ\hat{\pi}^1_*(Y)),
\end{align*}
so $Q\circ\hat{\pi}^1_*(Y)-\varphi|_{\g_{-2}}\circ\theta_{-2}\circ\hat{\pi}^1_*(Y)$ lies in the kernel of $\theta_{-2}$ (when restricted to the image of $Q$), which is $\mathbb{C}T_\varphi^{-1}P^0/\mathbb{C}T_\varphi^0P^0$ -- i.e., the bundle on which $\theta_{-1}$ acts as an isomorphism. Therefore, define
\begin{align*}
\hat{\theta}^1_{-1}|_{\varphi_1}(Y)=\theta_{-1}(Q\circ\hat{\pi}^1_*(Y)-\varphi|_{\g_{-2}}\circ\theta_{-2}\circ\hat{\pi}^1_*(Y)).
\end{align*}
In particular, if $Y\in\mathbb{C}T^{-1}_{\varphi_1}\hat{P}^1$ then $\theta_{-2}\circ\hat{\pi}^1_*(Y)=0$ and we have
\begin{align*}
\hat{\theta}^1_{-1}=(\hat{\pi}^1)^*(\theta_{-1}\circ Q)=(\hat{\pi}^1)^*\theta_{-1}.
\end{align*}
To see how $\hat{\theta}^1_{-1}$ transforms under the action of $R_{\mathbbm{1}+f}$, plug in $\tilde{\varphi}_1=\varphi_1+\varphi_1\circ f$,
\begin{align*}
(R_{\mathbbm{1}+f})^*\hat{\theta}^1_{-1}|_{\tilde{\varphi}_1}(Y)&=\theta_{-1}(Q\circ\hat{\pi}^1_*(Y)-\tilde{\varphi}|_{\g_{-2}}\circ\theta_{-2}\circ\hat{\pi}^1_*(Y))\\
&=\theta_{-1}(Q\circ\hat{\pi}^1_*(Y)-(\varphi|_{\g_{-2}}+\varphi|_{\g_{-1}}\circ f)\circ\theta_{-2}\circ\hat{\pi}^1_*(Y))\\
&=\hat{\theta}^1_{-1}|_{\varphi_1}(Y)-f\circ(\hat{\pi}^1)^*\theta_{-2}(Y),
\end{align*}
where we have used Definition \ref{hatP1def} once again.

Now let $Y\in\mathbb{C}T^{-1}_{\varphi_1}\hat{P}^1$ and note
\begin{align*}
\hat{\pi}^1_*(Y)-\varphi_1\circ \theta_{-1}\circ \hat{\pi}^1_*(Y)\in\ker\theta_{-1}=\mathbb{C}T^0_\varphi P^0,
\end{align*}
so we set
\begin{align*}
\hat{\theta}^1_{0,\pm2}|_{\varphi_1}(Y)=\theta_{0,\pm2}(\hat{\pi}^1_*(Y)-\varphi_1\circ \theta_{-1}\circ \hat{\pi}^1_*(Y)).
\end{align*}
If $Y\in \mathbb{C}T^{0}_{\varphi_1}\hat{P}^1$ then $\theta_{-1}\circ \hat{\pi}^1_*(Y)=0$ and we have
\begin{align*}
\hat{\theta}^1_{0,\pm2}|_{\varphi_1}(Y)=(\hat{\pi}^1)^*\theta_{0,\pm2}(Y),
\end{align*}
as claimed. The transformation property follows like before,
\begin{align*}
(R_{\mathbbm{1}+f})^*\hat{\theta}^1_{0,\pm2}|_{\tilde{\varphi}_1}(Y)&=\theta_{0,\pm2}(\hat{\pi}^1_*(Y)-(\varphi_1|_{\g_{-1}}+\varphi_1|_{\g_{0}}\circ f)\circ \theta_{-1}\circ \hat{\pi}^1_*(Y))\\
&=\hat{\theta}^1_{0,\pm2}|_{\varphi_1}(Y)-f\circ(\hat{\pi}^1)^*\theta_{-1}(Y),
\end{align*}
where the latter term is $f\circ\hat{\theta}^1_{-1}(Y)$ by virtue of the fact that $\hat{\theta}^1_{-1}=(\hat{\pi}^1)^*\theta_{-1}$ on $\mathbb{C}T^{-1}\hat{P}^1$. Note that $f\circ\hat{\theta}^1_{-1}$ is understood to refer only to those components of $f$ which map into $\g_{0,\pm2}$,
\begin{align*}
f\circ\hat{\theta}^1_{-1}=f_{-1,\pm1}^{1,\pm1}\hat{\theta}^1_{-1,\pm1},
\end{align*}
as $\theta_{0,\pm2}$ vanishes on the image of $\varphi_1|_{\g_{0,j}}$ for $j=0,\mp2$.

Next we observe
\begin{align*}
\hat{\pi}^1_*(Y)-\varphi_1\circ \theta_{-1}\circ \hat{\pi}^1_*(Y)-\varphi_1\circ\hat{\theta}^1_{0,2}|_{\varphi_1}(Y)-\varphi_1\circ\hat{\theta}^1_{0,-2}|_{\varphi_1}(Y)\in\ker\theta_{0,2}\cap\ker\theta_{0,-2}=T_\varphi^{0,0} P^0,
\end{align*}
and define
\begin{align*}
\hat{\theta}^1_{0,0}|_{\varphi_1}(Y)=\theta_{0,0}(\hat{\pi}^1_*(Y)-\varphi_1\circ \theta_{-1}\circ \hat{\pi}^1_*(Y)-\varphi_1\circ\hat{\theta}^1_{0,2}|_{\varphi_1}(Y)-\varphi_1\circ\hat{\theta}^1_{0,-2}|_{\varphi_1}(Y)).
\end{align*}
When $Y\in T_{\varphi_1}^{0,0}\hat{P}^1$, each term vanishes except $\theta_{0,0}(\hat{\pi}^1_*(Y))$ as claimed. Showing the transformation property for $\hat{\theta}^1_{0,0}$ requires that of the other zero-graded components,
\begin{align*}
(R_{\mathbbm{1}+f})^*\hat{\theta}^1_{0,0}|_{\tilde{\varphi}_1}(Y)&=\theta_{0,0}\Big(\hat{\pi}^1_*(Y)-\tilde{\varphi}_1\circ \theta_{-1}\circ \hat{\pi}^1_*(Y)-\sum_{j=\pm2}\tilde{\varphi}_1\circ(R_{\mathbbm{1}+f})^*\hat{\theta}^1_{0,j}|_{\tilde{\varphi}_1}(Y)\Big)\\
&=\theta_{0,0}\Big(\hat{\pi}^1_*(Y)-(\varphi_1|_{\g_{-1}}+\varphi_1|_{\g_{0}}\circ f)\circ \theta_{-1}\circ \hat{\pi}^1_*(Y)\\
&-\sum_{j=\pm2}(\varphi_1|_{\g_{0,j}}+\varphi_1|_{\g_{0,0}}\circ f|_{\g_{0,j}})(\hat{\theta}^1_{0,j}-f\circ\hat{\theta}^1_{-1})|_{\varphi_1}(Y)\Big)\\
&=\hat{\theta}^1_{0,0}|_{\varphi_1}(Y)-\theta_{0,0}\Big(\varphi_1|_{\g_{0}}\circ f\circ \theta_{-1}\circ \hat{\pi}^1_*(Y)
-\sum_{j=\pm2}\varphi_1|_{\g_{0,j}}\circ f\circ\hat{\theta}^1_{-1}|_{\varphi_1}(Y)\Big)\\
&-\sum_{j=\pm2}f|_{\g_{0,j}}(\hat{\theta}^1_{0,j}-f\circ\hat{\theta}^1_{-1})|_{\varphi_1}(Y)\\
&=\hat{\theta}^1_{0,0}|_{\varphi_1}(Y)-\theta_{0,0}\Big(\varphi_1|_{\g_{0,0}}\circ f\circ\hat{\theta}^1_{-1}|_{\varphi_1}(Y)\Big)-\sum_{j=\pm2}f|_{\g_{0,j}}(\hat{\theta}^1_{0,j}-f\circ\hat{\theta}^1_{-1})|_{\varphi_1}(Y),
\end{align*}
so $\hat{\theta}^1_{0,0}$ transforms as indicated. Putting all of the zero-graded forms together, we obtain
\begin{align*}
&\hat{\theta}^1_0=\sum_{j=0,\pm2}\hat{\theta}^1_{0,j}:\mathbb{C}T^{-1}\hat{P}^1\to\g_0
\end{align*}
whose kernel when restricted to $\mathbb{C}T^0\hat{P}^1$ is the vertical bundle $\mathbb{C}T^1\hat{P}^1$.
\end{proof}

To these soldering forms we now add one which acts isomorphically on the vertical bundle $\mathbb{C}T^1\hat{P}^1$, as in \eqref{vert1}. This form is defined using vector fields $\zeta_f$ as in \eqref{P1zeta} in the same manner that $\theta_{0,0}$ was defined on $P^0$,
\begin{align*}
 \hat{\theta}^1_1:\mathbb{C}T^{1}\hat{P}^1\to\hat{\g}_1,
\end{align*}
by the linear extension of
\begin{align*}
\hat{\theta}^1_1(\zeta_f)=f.
\end{align*}
The transformation properties of the $\g^0$-valued soldering forms described in Proposition \ref{hatP1forms} now provide the following.

\begin{cor}\label{T1_brax}
Let $Z\in\Gamma(\mathbb{C}T^1\hat{P}^1)$ and $i=-2,-1$. For $Y\in\Gamma(\mathbb{C}T^i\hat{P}^1)$ such that $\hat{\theta}^1_{i}(Y)= y\in\g_i$ and $\hat{\theta}^1_{0}(Y)=0$ when $i=-1$,
\begin{eqnarray}
~&\hat{\theta}^1_{i}([Z,Y])=0,\nonumber \\
~&\hat{\theta}^1_{i+1}([Z,Y])=\hat{\theta}^1_1(Z)( y),\label{T1_ZY}
\end{eqnarray}
where the latter indicates the action of $\hat{\theta}^1_1(Z)\in\hat{\g}_1\subset\Hom(\g^0,\g^0)$ on $ y\in\g_-$. Furthermore, for $V\in\Gamma(\mathbb{C}T^{0}\hat{P}^1)$ such that $\hat{\theta}^1_{0}(V)= v\in\g_0$,
\begin{align}
\label{T1_ZV}
&\hat{\theta}^1_{0,\pm2}([Z,V])=0,
&\hat{\theta}^1_{0,0}([Z,V])=\hat{\theta}^1_1(Z)( v).
\end{align}
\end{cor}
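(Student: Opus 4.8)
The plan is to mirror the proof of Corollary \ref{equiv-reg}, replacing the fundamental fields $\zeta_v$ on $P^0$ by the vertical fields $\zeta_f$ on $\hat P^1$ from \eqref{P1zeta}, and replacing the equivariance of Lemma \ref{m-forms-equiv} by the transformation laws of Proposition \ref{hatP1forms}. Since both sides of each identity are $C^\infty(\hat P^1)$-linear in $Z$ — the vertical field $Z$ lies in the kernel of every $\hat{\theta}^1_{i}$ with $i\leq 0$, so the correction term in $[gZ,Y]=g[Z,Y]-(Yg)Z$ contributes nothing after applying any of these forms — it suffices to treat $Z=\zeta_f$ with $\hat{\theta}^1_1(Z)=f\in\hat{\g}_1$. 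The flow of $\zeta_f$ is $R_{\mathbbm{1}+tf}$, so differentiating the transformation laws of Proposition \ref{hatP1forms} at $t=0$ (where every term quadratic in $f$ drops out) gives the Lie derivatives
\begin{align*}
&\mathcal{L}_{\zeta_f}\hat{\theta}^1_{-2}=0,
&&\mathcal{L}_{\zeta_f}\hat{\theta}^1_{-1}=-f\circ\hat{\theta}^1_{-2},\\
&\mathcal{L}_{\zeta_f}\hat{\theta}^1_{0,\pm2}=-f\circ\hat{\theta}^1_{-1},
&&\mathcal{L}_{\zeta_f}\hat{\theta}^1_{0,0}=-f\circ\hat{\theta}^1_{-1}-f\circ\hat{\theta}^1_{0,\pm2}.
\end{align*}

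Next I would invoke Cartan's formula exactly as in the proof of Corollary \ref{equiv-reg}. Each form $\alpha\in\{\hat{\theta}^1_{-2},\hat{\theta}^1_{-1},\hat{\theta}^1_{0,\pm2},\hat{\theta}^1_{0,0}\}$ vanishes on the vertical field $Z$, so the interpreted identity
\[
\alpha([Z,Y])=Z(\alpha(Y))-(\mathcal{L}_{\zeta_f}\alpha)(Y)
\]
holds whenever $[Z,Y]$ remains in the domain of $\alpha$; this is guaranteed because $Z$ is vertical, whence $\hat{\pi}^1_*[Z,Y]=0$ and $[Z,Y]$ is again vertical, lying in $\mathbb{C}T^1\hat{P}^1$ inside every filter. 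Taking $Y$ (resp.\ $V$) to be a field on which the relevant lower-degree soldering forms are constant makes $Z(\alpha(Y))=0$, and substituting the filtration constraints finishes each case. For $i=-1$ one uses $\hat{\theta}^1_{-2}(Y)=0$ together with the prescribed $\hat{\theta}^1_0(Y)=0$ to read off $\hat{\theta}^1_{-1}([Z,Y])=f(\hat{\theta}^1_{-2}(Y))=0$ and $\hat{\theta}^1_0([Z,Y])=f(\hat{\theta}^1_{-1}(Y))=f(y)$. For the $V$-case one uses $\hat{\theta}^1_{-1}(V)=0$ and $f|_{\g_{0,0}}=0$ from \eqref{hatg1zero} to get $\hat{\theta}^1_{0,\pm2}([Z,V])=0$ and $\hat{\theta}^1_{0,0}([Z,V])=f(\hat{\theta}^1_{0,\pm2}(V))=f(v)$. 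For $i=-2$ one obtains $\hat{\theta}^1_{-2}([Z,Y])=0$ and $\hat{\theta}^1_{-1}([Z,Y])=f(\hat{\theta}^1_{-2}(Y))=f(y)$. In every case the value is $\hat{\theta}^1_1(Z)(y)$ (resp.\ $\hat{\theta}^1_1(Z)(v)$), as claimed in \eqref{T1_ZY} and \eqref{T1_ZV}.

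Two points require care, and I expect the second to be the main obstacle. First, $\hat{\theta}^1_0$ and its bigraded components are not genuine one-forms — they are only defined on $\mathbb{C}T^{-1}\hat{P}^1$ — so Cartan's formula must be read in the interpreted sense used in the proof of Corollary \ref{equiv-reg}; the observation that $[Z,Y]$ is again vertical is exactly what keeps every manipulation inside the filters where these forms live. Second, one must ensure the term $Z(\alpha(Y))$ vanishes. For $i=-1$ and the $V$-case this is automatic, since the prescribed values $\hat{\theta}^1_0(Y)=0$ and $\hat{\theta}^1_0(V)=v$ are constant; the genuinely delicate term is $Z(\hat{\theta}^1_{-1}(Y))$ appearing in the case $i=-2$. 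I would dispose of it by choosing $Y$ to be $\hat{\pi}^1$-projectable, so that the value of $\hat{\theta}^1_{-1}(Y)$ at $\varphi_1$ depends only on $\hat{\pi}^1(\varphi_1)$ and $\hat{\pi}^1_*(Y)$ and is therefore constant along each fiber of $\hat{P}^1$; this is legitimate once one checks that $\hat{\theta}^1_{-1}([Z,Y])$ is unchanged under modification of $Y$ by a vertical field, which follows because such a modification keeps $[Z,Y]$ vertical while $\hat{\theta}^1_{-1}$ annihilates vertical fields.
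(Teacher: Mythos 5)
Your strategy is the paper's own: reduce to the fields $Z=\zeta_f$ of \eqref{P1zeta}, compute $\mathcal{L}_{\zeta_f}\hat{\theta}^1_{i}$ by differentiating the transformation laws of Proposition \ref{hatP1forms} at $t=0$, and convert Lie derivatives into brackets via Cartan's formula. Your Lie-derivative formulas are correct, your justification of the reduction to $\zeta_f$ is a welcome detail the paper omits, and the cases $i=-1$ and $V\in\Gamma(\mathbb{C}T^{0}\hat{P}^1)$ go through exactly as you say, because there the hypotheses $\hat{\theta}^1_{0}(Y)=0$, resp.\ $\hat{\theta}^1_{0}(V)=v$ constant, kill the term $Z(\alpha(Y))$.

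Your treatment of the remaining term in the case $i=-2$, however, is wrong, and it rests on a misreading of how $\hat{\theta}^1_{-1}$ depends on the fiber coordinate of $\hat{P}^1$. On $\mathbb{C}T^{-2}\hat{P}^1$ one has $\hat{\theta}^1_{-1}|_{\varphi_1}(Y)=\theta_{-1}\bigl(Q\circ\hat{\pi}^1_*(Y)-\varphi_1|_{\g_{-2}}\circ\theta_{-2}\circ\hat{\pi}^1_*(Y)\bigr)$, which depends on the fiber point $\varphi_1$ through $\varphi_1|_{\g_{-2}}$; this is precisely what the law $(R_{\mathbbm{1}+tf})^*\hat{\theta}^1_{-1}=\hat{\theta}^1_{-1}-tf\circ\hat{\theta}^1_{-2}$ records. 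Hence for a $\hat{\pi}^1$-projectable $Y$ with $\hat{\theta}^1_{-2}(Y)=y$ the function $\hat{\theta}^1_{-1}(Y)$ is \emph{not} fiberwise constant: it varies by $-tf(y)$ along the flow of $\zeta_f$, so $Z(\hat{\theta}^1_{-1}(Y))=-f(y)$ and your computation returns $\hat{\theta}^1_{-1}([Z,Y])=0$ rather than $f(y)$. Indeed for projectable $Y$ the bracket $[Z,Y]$ \emph{is} vertical and $\hat{\theta}^1_{-1}$ annihilates vertical vectors, so $0$ is the true value for that representative — i.e.\ \eqref{T1_ZY} fails for projectable $Y$, and your invariance check does not save the argument, since passing from a projectable $Y$ to one with $\hat{\theta}^1_{-1}(Y)=0$ (the $i=-2$ analogue of the stated condition $\hat{\theta}^1_{0}(Y)=0$, and the condition actually used in Lemma \ref{torsion_1_def}) requires adding a non-vertical section of $\mathbb{C}T^{-1}\hat{P}^1$ with non-constant $\hat{\theta}^1_{-1}$-value, which shifts $\hat{\theta}^1_{-1}([Z,Y])$ by exactly $f(y)$. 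For the same reason your blanket claim that $\hat{\pi}^1_*[Z,Y]=0$ and $[Z,Y]$ is vertical is false for the non-projectable $Y$ the corollary actually concerns (were it true, every left-hand side of \eqref{T1_ZY}--\eqref{T1_ZV} would vanish); what is true, and all that is needed to keep $[Z,Y]$ in the domain of the relevant form, is that bracketing with a vertical field preserves each filter $(\hat{\pi}^1_*)^{-1}(\mathbb{C}T^{i}P^0)$. The correct repair is simply to impose $\hat{\theta}^1_{-1}(Y)=0$ (or constancy of $\hat{\theta}^1_{-1}(Y)$) when $i=-2$; then $Z(\hat{\theta}^1_{-1}(Y))=0$ trivially and your Cartan-formula computation yields the stated identities.
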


\begin{proof}
The proof is analogous to that of Corollary \ref{equiv-reg}, so we will leave out some of the details this time. Once again, it suffices to consider the vector fields $Z=\zeta_f$ for $f\in\hat{\g}_1$ as defined in \eqref{P1zeta}. The definition of the Lie derivative is
\begin{align*}
 \mathcal{L}_{\zeta_f}\hat{\theta}^1_{i}=\left.\frac{d}{dt}\right|_{t=0}R_{\mathbbm{1}+tf}^*\hat{\theta}^1_{i},
\end{align*}
and Cartan's formula shows
\begin{align*}
 (\mathcal{L}_{\zeta_f}\hat{\theta}^1_{i})(Y)=-\hat{\theta}^1_{i}([\zeta_f,Y]).
\end{align*}
By Proposition \ref{hatP1forms}, when $i=-2$,
\begin{align*}
-\hat{\theta}^1_{-2}([\zeta_f,Y])&= \left.\frac{d}{dt}\right|_{t=0}R_{\mathbbm{1}+tf}^*\hat{\theta}^1_{-2}(Y)=\left.\frac{d}{dt}\right|_{t=0}\hat{\theta}^1_{-2}(Y)=\left.\frac{d}{dt}\right|_{t=0} y=0,
\end{align*}
while
\begin{align*}
-\hat{\theta}^1_{-1}([\zeta_f,Y])&= \left.\frac{d}{dt}\right|_{t=0}R_{\mathbbm{1}+tf}^*\hat{\theta}^1_{-1}(Y)\\
&=\left.\frac{d}{dt}\right|_{t=0}(\hat{\theta}^1_{-1}-tf\circ\hat{\theta}^1_{-2})(Y)\\
&=-f( y).
\end{align*}
Similarly, when $i=-1$ and $Y\in\Gamma(\mathbb{C}T^{-1}\hat{P}^1)$ as prescribed,
\begin{align*}
-\hat{\theta}^1_{-1}([\zeta_f,Y])&=\left.\frac{d}{dt}\right|_{t=0}(\hat{\theta}^1_{-1}-tf\circ\hat{\theta}^1_{-2})(Y)=-f\circ\hat{\theta}^1_{-2}(Y)=0,
\end{align*}
while
\begin{align*}
-\hat{\theta}^1_{0}([\zeta_f,Y])&=\left.\frac{d}{dt}\right|_{t=0}(\hat{\theta}^1_{0}-(tf-t^2f\circ  f)\circ\hat{\theta}^1_{-1}-tf\circ\hat{\theta}^1_{0})(Y) \\
&=-f( y).
\end{align*}
The latter equation also proves the result for $V\in\Gamma(\mathbb{C}T^0\hat{P}^1)$, as $f\circ\hat{\theta}^1_{0}$ only takes values in $\g_{0,0}$.
\end{proof}

The regularity conditions expressed in \eqref{g-reg}, \eqref{bigreg}, and Corollary \ref{equiv-reg} dictate how the soldering forms on $P^0$ relate the bracket of vector fields in $\Gamma(\mathbb{C}TP^0)$ to the Lie algebra bracket on $\g^0$. The soldering forms on $\hat{P}^1$ maintain this property when restricted to the subbundles of $\mathbb{C}T\hat{P}^1$ where they are the $\hat{\pi}^1$-pullbacks of the soldering forms on $P^0$, but on their newly extended domains their interaction with the bracket of vector fields is measured by a \emph{torsion tensor} associated to each $\varphi_1\in\hat{P}^1$.

\begin{lem}\label{torsion_1_def}
For $\varphi_1\in\hat{P}^1$, define the \emph{torsion tensor} $\tau_1\in\Hom(\g^0\otimes\g_{-1},\g_-)$ as follows. Let $i_2=-1$, $-2\leq i_1\leq 0$, and $ y_{\ell}\in\g_{i_\ell}$ for $\ell=1,2$. Take local vector fields $Y_{\ell}\in\Gamma(\mathbb{C}T^{i_\ell}\hat{P}^1)$ such that
\begin{align*}
&\hat{\theta}^1_{i_\ell}(Y_\ell)= y_\ell,
&&\hat{\theta}^1_{i_\ell+1}(Y_\ell)=0\hspace{2mm}(\text{if }i_\ell<0),
\end{align*}
and set
\begin{align*}
&\tau_1( y_1, y_2)=\hat{\theta}^1_{m}|_{\varphi_1}([Y_1,Y_2]),
&m=\min\{i_1,-1\}.
\end{align*}
Similarly, define $\tau_\pm\in \Hom(\g_{0,\pm2}\otimes\g_{-1,\mp1},\g_{0,\pm2})$ for $ y_1\in\g_{0,\pm2}$ and $ y_2\in\g_{-1,\mp1}$ by taking $Y_1,Y_2$ as above and setting
\begin{align*}
\tau_\pm( y_1, y_2)=\hat{\theta}^1_{0,\pm2}|_{\varphi_1}([Y_1,Y_2]).
\end{align*}
Then $\tau_1$ and $\tau_\pm$ are well-defined.
\end{lem}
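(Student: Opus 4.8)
The plan is to prove well-definedness by showing that the value $\hat{\theta}^1_m|_{\varphi_1}([Y_1,Y_2])$ does not change when the representative fields are replaced by any other admissible choices (the statement for $\tau_\pm$ being entirely parallel). Fix $y_1\in\g_{i_1}$, $y_2\in\g_{-1}$, take a second admissible pair $Y_1',Y_2'$, and set $Z_\ell=Y_\ell'-Y_\ell$. The normalization conditions force each $Z_\ell$ into a deep filter: when $i_\ell\in\{-1,0\}$ the conditions $\hat{\theta}^1_{i_\ell}(Z_\ell)=0$ and (for $i_\ell=-1$) $\hat{\theta}^1_{0}(Z_\ell)=0$ place $Z_\ell\in\Gamma(\mathbb{C}T^1\hat{P}^1)$, i.e. $Z_\ell$ is vertical, whereas when $i_\ell=-2$ the conditions $\hat{\theta}^1_{-2}(Z_\ell)=0=\hat{\theta}^1_{-1}(Z_\ell)$ only give $Z_\ell\in\Gamma(\mathbb{C}T^0\hat{P}^1)$. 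Since $i_2=-1$ always, $Z_2$ is always vertical. Expanding $[Y_1',Y_2']=[Y_1,Y_2]+[Z_1,Y_2]+[Y_1,Z_2]+[Z_1,Z_2]$ by bilinearity, it then suffices to prove that $\hat{\theta}^1_m|_{\varphi_1}$ annihilates each of the three correction brackets.

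First I would dispatch the correction brackets that fall directly under Corollary \ref{T1_brax}. Equation \eqref{T1_ZY} gives $\hat{\theta}^1_i([Z,Y])=0$ for vertical $Z$ and $Y\in\Gamma(\mathbb{C}T^i\hat{P}^1)$ normalized as in the lemma; its $i=-1$ instance kills $\hat{\theta}^1_m([Z_1,Y_2])$ for $i_1\in\{-1,0\}$, and its $i=i_1$ instance kills $\hat{\theta}^1_m([Y_1,Z_2])=-\hat{\theta}^1_m([Z_2,Y_1])$ for $i_1\in\{-2,-1\}$. When both $Z_1,Z_2$ are vertical, i.e. $i_1\in\{-1,0\}$, the bracket $[Z_1,Z_2]$ is again vertical by involutivity of $\ker\hat{\pi}^1_*=\mathbb{C}T^1\hat{P}^1$ (see \eqref{vert1}), hence lies in $\ker\hat{\theta}^1_m$. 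Because $\hat{\theta}^1_{-1}$ and the $\hat{\theta}^1_{0,j}$ are not globally defined one-forms, each such identity must be read through the Lie-derivative computation $\mathcal{L}_{\zeta_f}\hat{\theta}^1_{\bullet}=\tfrac{d}{dt}|_{0}R^*_{\mathbbm{1}+tf}\hat{\theta}^1_{\bullet}$ of Corollaries \ref{equiv-reg} and \ref{T1_brax}; this is legitimate because the quantities in question are tensorial and it suffices to test them on the vertical fields $\zeta_f$ that span the fibre directions by Proposition \ref{hatP1fibers}.

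The remaining terms, which are the crux, are exactly those not covered by Corollary \ref{T1_brax}: the horizontal freedom present only at top degree $i_1=-2$, where $Z_1\in\Gamma(\mathbb{C}T^0\hat{P}^1)$ need not be vertical; and the mixed term $[Z_2,Y_1]$ at $i_1=0$, where $Z_2$ is vertical but $Y_1\in\Gamma(\mathbb{C}T^0)$ lies outside the scope of \eqref{T1_ZY}. For the former I would use that $\hat{\theta}^1_{-2}=(\hat{\pi}^1)^*\theta_{-2}$ is a genuine, globally defined one-form. Since $Z_1,Y_2\in\ker\hat{\theta}^1_{-2}$, Cartan's formula collapses to $\hat{\theta}^1_{-2}([Z_1,Y_2])=-d\theta_{-2}(\hat{\pi}^1_*Z_1,\hat{\pi}^1_*Y_2)$, a tensorial expression; as $\hat{\pi}^1_*Z_1\in\Gamma(\mathbb{C}T^0P^0)\subset\Gamma(\mathbb{C}T^{-1}P^0)$ satisfies $\theta_{-1}(\hat{\pi}^1_*Z_1)=0$, the regularity relation \eqref{g-reg} forces $\theta_{-2}([\hat{\pi}^1_*Z_1,\hat{\pi}^1_*Y_2])=0$ and hence the $d\theta_{-2}$ term vanishes. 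The same pullback argument kills $[Z_1,Z_2]$ at $i_1=-2$, since $\hat{\pi}^1_*Z_2=0$ makes $d\theta_{-2}(\hat{\pi}^1_*Z_1,\hat{\pi}^1_*Z_2)=0$. For the mixed $i_1=0$ term, the Lie-derivative identity $\mathcal{L}_{\zeta_f}\hat{\theta}^1_{-1}=-f\circ\hat{\theta}^1_{-2}$ yields $\hat{\theta}^1_{-1}([\zeta_f,Y_1])=f(\hat{\theta}^1_{-2}(Y_1))=0$, because $Y_1\in\Gamma(\mathbb{C}T^0)\subset\ker\hat{\theta}^1_{-2}$.

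Finally, the well-definedness of $\tau_\pm$ follows by the identical scheme with the bigraded inputs in place of the graded ones: the bigraded regularity \eqref{bigreg} replaces \eqref{g-reg}, and the statements \eqref{T1_ZV} supply $\hat{\theta}^1_{0,\pm2}([Z,V])=0$ for vertical $Z$. I expect the main obstacle to be precisely the bookkeeping forced by the non-globally-defined soldering forms: Cartan's formula cannot be applied naively to $\hat{\theta}^1_{0,\pm2}$ or $\hat{\theta}^1_{-1}$, so each vanishing must be routed either through the globally defined $\hat{\theta}^1_{-2}=(\hat{\pi}^1)^*\theta_{-2}$ and the regularity relation \eqref{g-reg}, or through the Lie-derivative identities for the fields $\zeta_f$, all the while keeping careful track of which filter $\mathbb{C}T^k\hat{P}^1$ each correction bracket occupies.
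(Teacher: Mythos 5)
Your proposal is correct and follows essentially the same route as the paper's proof: decompose the difference of admissible fields into correction terms lying in $\mathbb{C}T^{i_\ell+2}\hat{P}^1$ and kill each resulting bracket via Corollary \ref{T1_brax} (you are in fact more explicit than the paper about the non-vertical correction $[Z_1,Y_2]$ at $i_1=-2$, which the paper silently absorbs into its citation of the Corollary, and your pullback/regularity argument via $\hat{\theta}^1_{-2}=(\hat{\pi}^1)^*\theta_{-2}$ and \eqref{g-reg} is the right way to justify it). The one detail to spell out when executing the $\tau_\pm$ case is that \eqref{T1_ZY} gives $\hat{\theta}^1_0([Z_1,Y_2])=\hat{\theta}^1_1(Z_1)(y_2)\in\g_{0,0}\oplus\g_{0,\mp2}$ for $y_2\in\g_{-1,\mp1}$, whose $\g_{0,\pm2}$-component therefore vanishes --- this biweight count, rather than \eqref{bigreg} or \eqref{T1_ZV}, is what kills the $[Z_1,Y_2]$ correction there, exactly as the paper notes.
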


\begin{proof}
We must confirm that the definition is independent of the choices of vector fields, so suppose $\tilde{Y}_\ell\in\Gamma(\mathbb{C}T^{i_\ell}\hat{P}^1)$ is an alternative choice for each $i_\ell$. If $i_\ell<0$,
\begin{align*}
&0= y_\ell- y_\ell=\hat{\theta}^1_{i_\ell}(\tilde{Y}_\ell-Y_\ell),
&0=\hat{\theta}^1_{i_\ell+1}(\tilde{Y}_\ell-Y_\ell),
\end{align*}
which together imply
\begin{align}
&\tilde{Y}_\ell=Y_\ell+Z_\ell,
&Z_\ell\in\Gamma(\mathbb{C}T^{i_\ell+2}\hat{P}^1).
\end{align}
Similarly, when $i_1=0$,
\begin{align}
&\tilde{Y}_1=Y_1+Z_1,
&Z_1\in\Gamma(\mathbb{C}T^1\hat{P}^1).
\end{align}
With these vector fields, the first component of torsion is given by
\begin{equation}\label{t1_wd}
\begin{aligned}
\tau_1( y_1, y_2)&=\hat{\theta}^1_{m}|_{\varphi_1}([\tilde{Y}_1,\tilde{Y}_2])\\
&=\hat{\theta}^1_{m}|_{\varphi_1}([Y_1,Y_2]+[Z_1,Y_2]+[Y_1,Z_2]+[Z_1,Z_2]).
\end{aligned}
\end{equation}
By Corollary \ref{T1_brax}, $[Z_1,Y_2]+[Y_1,Z_2]+[Z_1,Z_2]\in\ker\hat{\theta}^1_{m}$, so $\tau_1$ is well-defined. Using $\tilde{Y}_1\in\Gamma(T^{0,\pm2}\hat{P}^1)$ and $\tilde{Y}_2\in\Gamma(T^{-1,\mp1}\hat{P}^1)$ to define $\tau_\pm$ will also result in an expression of the form \eqref{t1_wd}, except in this case $m=(0,\pm2)$. The result will follow once again by Corollary \ref{T1_brax}, noting that $ y_2\in\g_{-1,\mp1}$ implies $\hat{\theta}^1_1(Z_1)( y_2)\in\g_{0,0}\oplus\g_{0,\mp2}$, which implies $[Z_1,Y_2]\in\ker\hat{\theta}^1_{0,\pm2}$. The rest is immediate from the Corollary.
\end{proof}

\begin{rem}\label{intt}
Let $y_1\in\g_{0,0}\oplus\g_{0,\pm2}$. If $y_2\in\g_{-1,\mp 1}$ (opposite signs) Proposition \ref{hatP1forms} and the regularity conditions given by \eqref{bigreg} and Corollary \ref{equiv-reg} together imply that $\tau_1(y_1,y_2)$ is the same for every $\varphi_1$ in a fiber of $\hat{\pi}^1:\hat{P}^1\to P^0$. Furthermore, integrability of the bundles $T^{-1,\pm1}\hat{P}^1$ implies that if $y_2\in\g_{-1,\pm1}$ (matching signs) then the only nontrivial component of $\tau_1(y_1,y_2)$ takes values in $\g_{-1,\pm1}$. 
\end{rem}

\begin{definition}\label{lacdiff} The modified Lie algebra cohomology differential $\partial_1:\hat{\g}_1\to\Hom(\g^0\otimes\g_{-1},\g^0)$ is 
\begin{align*}
\partial_1 f(y,z)=\left\{\begin{array}{lr}[f(y), z]+[ y,f(z)]-f([y, z]),& y\in\g_-, z\in\g_{-1};\\{}
[f(y),z]+\sum_{j=\pm1}\Big([y_{0,2j},f^{1,j}(z_{-1,-j})]-f^{1,j}([y_{0,2j},z_{-1,-j}])\Big),&y\in\g_{0},z\in\g_{-1},
\end{array}\right.
\end{align*}
where, in the latter formula, we have
\begin{align*}
&y=y_{0,-2}+y_{0,0}+y_{0,2},
&z=z_{-1,-1}+z_{-1,1},
&&y_{i,j}, z_{i,j}\in\g_{i,j};
&&f=f^{1,1}+f^{1,-1},
&&f^{1,\pm1}\in\hat{\g}_{1,\pm1}.
\end{align*}
We emphasize the following facts about $\partial_1f$:
\begin{enumerate}[label=\textbf{(\alph*)}]

\item\label{g-t} When $f\in\hat{\g}_{1,\pm1}$,  $y\in\g_{i_1,j_1}$, and $z\in \g_{i_2,j_2}$, where $(i_\ell,j_\ell)\in I(\g_-)$ for $\ell=1,2$, $\partial_1 f(y,z)$ is necessarily trivial in all but the following cases:
\begin{enumerate}[(1)]
\item $y\in\g_{-1,\mp1}$, $z\in\g_{-1,\pm1}$;

\item $y,z\in\g_{-1,\pm1}$, $f\in\g_{1,\mp1}$;

\item $y\in\g_{-2,0}$, $z\in\g_{-1,\pm1}$, $f\in \g_{1,\mp1}$.
\end{enumerate}

\item\label{g0t} When $y\in\g_0$, $\partial_1f(y,z)$ takes values in multiple bigraded components of $\g^0$; in particular, $[f( y), z]\in\g_{-1}$, in contrast to $[y_{0,\pm2},f^{1,\pm1}(z_{-1,\mp1})]-f^{1,\pm1}([y_{0,\pm2},z_{-1,\mp1}])\in\g_{0,\pm2}$.
\end{enumerate}
\end{definition}

\begin{prop}\label{torsion_variation_1}
Let $\varphi_1,\tilde{\varphi}_1$ in the same fiber of $\hat{P}^1\to P^0$ be related by $\tilde{\varphi}_1=R_{\mathbbm{1}+f}(\varphi_1)$ for $f\in\hat{\g}_1$ according to Proposition \ref{hatP1fibers}. The torsion tensor $\tilde{\tau}_1$  associated to $\tilde{\varphi}_1$ is related to $\tau_1$ of $\varphi_1$ by
\begin{align}
\label{torstan}
\tilde{\tau}_1( y_1, y_2)&=\tau_1( y_1, y_2)+[f( y_1), y_2]+[ y_1,f( y_2)]-f([ y_1, y_2]),
&& y_1\in\g_{-}, y_2\in\g_{-1},
\end{align}
for every case mentioned in Definition \ref{lacdiff}\ref{g-t}; moreover,
\begin{align}
\label{torstan0}
\tilde{\tau}_1( y_1, y_2)&=\tau_1( y_1, y_2)+[f( y_1), y_2],
&& y_1\in\g_0, y_2\in\g_{-1},
\end{align}
and the tensors $\tilde{\tau}_\pm$  associated to $\tilde{\varphi}_1$ are related to $\tau_\pm$ of $\varphi_1$ by
\begin{align}
\label{torsnew}
\tilde{\tau}_\pm( y_1, y_2)&=\tau_\pm( y_1, y_2)+[ y_1,f^{1,\pm1}( y_2)]-f^{1,\pm1}([  y_1, y_2]),
&& y_1\in\g_{0,\pm2}, y_2\in\g_{-1,\mp1}.
\end{align}
\end{prop}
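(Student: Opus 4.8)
The plan is to realize the passage from $\varphi_1$ to $\tilde\varphi_1$ by a single fiber-preserving diffeomorphism and thereby convert the computation of the torsion at $\tilde\varphi_1$ into a pullback computation at $\varphi_1$. Fix the element $f\in\hat\g_1$ with $\tilde\varphi_1=R_{\mathbbm{1}+f}(\varphi_1)$ and extend it to the constant section of $\hat\g_1$, so that $\Phi:=R_{\mathbbm{1}+f}\colon\hat P^1\to\hat P^1$ is a smooth bundle map covering the identity on $P^0$. A direct check using \eqref{hatP1action} shows that, for fixed $f$, the induced self-map of each fiber is (under the identification of Proposition \ref{hatP1fibers}) the affine bijection $g\mapsto f+g\circ(\mathbbm{1}+f)$, so $\Phi$ is a local diffeomorphism with $\Phi(\varphi_1)=\tilde\varphi_1$. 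Choosing vector fields $Y_1,Y_2$ near $\varphi_1$ that are adapted to the pulled-back forms $\Phi^*\hat\theta^1$ in the sense of Lemma \ref{torsion_1_def}, their pushforwards $\tilde Y_\ell=\Phi_*Y_\ell$ are adapted to $\hat\theta^1$ at $\tilde\varphi_1$; since $\Phi_*[Y_1,Y_2]=[\tilde Y_1,\tilde Y_2]$, the definition of the torsion together with the transformation laws of Proposition \ref{hatP1forms} (interpreted fiberwise, as $\hat\theta^1_m$ is defined only on a filter) give
$$\tilde\tau_1(y_1,y_2)=\hat\theta^1_m|_{\tilde\varphi_1}([\tilde Y_1,\tilde Y_2])=(\Phi^*\hat\theta^1_m)|_{\varphi_1}([Y_1,Y_2]),$$
and analogously for $\tilde\tau_\pm$ with $\hat\theta^1_m$ replaced by $\hat\theta^1_{0,\pm2}$.

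Next I would substitute the transformation rules of Proposition \ref{hatP1forms}, writing $\Phi^*\hat\theta^1_k=\hat\theta^1_k-C_k$ where each correction $C_k$ is $f$ composed with soldering forms of strictly lower filtration degree. This lower-triangular structure has two consequences. First, inverting the same rules on the adaptation conditions shows that, after choosing each $Y_\ell$ inside the appropriate filter $\mathbb{C}T^{i_\ell}\hat P^1$, one has $\hat\theta^1_{i_\ell}|_{\varphi_1}(Y_\ell)=y_\ell$ with the lower forms of $Y_\ell$ vanishing, so that $\hat\theta^1_m|_{\varphi_1}([Y_1,Y_2])$ computes exactly $\tau_1(y_1,y_2)$ by the well-definedness in Lemma \ref{torsion_1_def}. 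Second, expanding $-C_m|_{\varphi_1}([Y_1,Y_2])$ and invoking the regularity relations \eqref{g-reg} and \eqref{bigreg} together with Corollaries \ref{equiv-reg} and \ref{T1_brax} to replace brackets of adapted fields by the corresponding Lie brackets in $\g^0$, the correction collapses to $[f(y_1),y_2]+[y_1,f(y_2)]-f([y_1,y_2])$, which is precisely $\partial_1 f(y_1,y_2)$ of Definition \ref{lacdiff}. The identical steps applied to $\hat\theta^1_{0,\pm2}$, using its rule $\Phi^*\hat\theta^1_{0,\pm2}=\hat\theta^1_{0,\pm2}-f\circ\hat\theta^1_{-1}$, yield \eqref{torsnew}.

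The bookkeeping of bigraded components is where the three formulas diverge, and I expect it to be the main obstacle. When $y_1\in\g_-$, all three terms of $\partial_1 f$ have first weight $\le-1$ and survive in the $\g_-$-valued form $\hat\theta^1_m$, giving \eqref{torstan}. When $y_1\in\g_0$ and $y_2\in\g_{-1}$, the term $[f(y_1),y_2]$ lies in $\g_{-1}$ whereas the contributions $[(y_1)_{0,\pm2},f^{1,\pm1}(y_2)]-f^{1,\pm1}([(y_1)_{0,\pm2},y_2])$ lie in $\g_{0,\pm2}$; the first-weight-$(-1)$ form $\hat\theta^1_m$ ($m=-1$) sees only the former, producing \eqref{torstan0}, while the latter reappear through $\hat\theta^1_{0,\pm2}$ as the right-hand side of \eqref{torsnew}. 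For $\tau_\pm$ one must further verify that $[f(y_1),y_2]$ drops out: since $f$ sends $\g_{0,\pm2}$ into $\g_{0,0}$, this bracket lands in $\g_{-1,\mp1}\subset\g_{-1}$ and contributes nothing to the $\g_{0,\pm2}$-valued $\hat\theta^1_{0,\pm2}$. The principal difficulty is thus not any single computation but the disciplined tracking of which bigraded summand each term occupies, ensuring the vector fields lie in the filters required to apply the regularity identities, and confirming via the integrability observations of Remark \ref{intt} that the surviving components are exactly those recorded in Definition \ref{lacdiff}.
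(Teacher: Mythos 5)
Your overall strategy --- realizing $\tilde\varphi_1=R_{\mathbbm{1}+f}(\varphi_1)$ via the fiber-preserving map $\Phi=R_{\mathbbm{1}+f}$, pushing adapted vector fields forward so that $\tilde\tau_1(y_1,y_2)=(\Phi^*\hat\theta^1_m)|_{\varphi_1}([Y_1,Y_2])$, and then expanding with the transformation laws of Proposition \ref{hatP1forms} and the regularity identities --- is exactly the paper's. The gap is in the middle step, where you assign the three terms of $\partial_1f$ to the wrong sources. If $Y_\ell$ is adapted to the pulled-back coframe, then inverting the transformation rules gives $\hat\theta^1_{i_\ell}(Y_\ell)=y_\ell$ but $\hat\theta^1_{i_\ell+1}(Y_\ell)=f(y_\ell)$, \emph{not} $0$; so $Y_\ell$ is not adapted to the original coframe, the well-definedness clause of Lemma \ref{torsion_1_def} does not apply, and $\hat\theta^1_m|_{\varphi_1}([Y_1,Y_2])$ is not $\tau_1(y_1,y_2)$. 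Writing $Y_\ell=Y_\ell^0+Z_\ell$ with $Y_\ell^0$ genuinely adapted and $\hat\theta^1_{i_\ell+1}(Z_\ell)=f(y_\ell)$, the cross-brackets $[Z_1,Y_2^0]+[Y_1^0,Z_2]$ contribute precisely $[f(y_1),y_2]+[y_1,f(y_2)]$ via \eqref{g-reg}, \eqref{bigreg} and Corollary \ref{equiv-reg}; this is where the paper obtains those two terms. Correspondingly, your second claim --- that the correction $-C_m([Y_1,Y_2])$ collapses to all of $\partial_1f(y_1,y_2)$ --- cannot be carried out: for $m=-1$ one has $C_{-1}=f\circ\hat\theta^1_{-2}$, and regularity yields only $-f([y_1,y_2])$, a single one of the three terms.

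The cleanest way to see that the scheme fails as written is case (3) of Definition \ref{lacdiff}\ref{g-t}, where $y_1\in\g_{-2,0}$ and $m=-2$: there $\Phi^*\hat\theta^1_{-2}=\hat\theta^1_{-2}$, so your correction $C_m$ vanishes identically and your argument returns $\tilde\tau_1=\tau_1$, whereas the correct variation is $[f(y_1),y_2]+[y_1,f(y_2)]$, which is generally nonzero. In cases (1)--(2) your two misattributions happen to cancel, so the final formulas come out right, but the proof needs the explicit correction fields $Z_\ell$ with $\hat\theta^1_{i_\ell+1}(Z_\ell)=f(y_\ell)$ and the evaluation of the resulting cross-brackets, which is how the paper organizes the computation. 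Your bigraded bookkeeping in the last paragraph --- in particular why $[f(y_1),y_2]$ appears in \eqref{torstan0} but is invisible to the $\g_{0,\pm2}$-valued form in \eqref{torsnew} --- is correct.
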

\begin{proof}
Take local vector fields $Y_\ell\in\Gamma(\mathbb{C}T^{i_\ell}\hat{P}^1)$ in a neighborhood of $\varphi_1$ satisfying
\begin{align*}
&\hat{\theta}^1_{i_\ell}(Y_\ell)= y_\ell,
&&\hat{\theta}^1_{i_\ell+1}(Y_\ell)=0\ (\text{if }i_\ell<0),
\end{align*}
so that
\begin{align*}
&\tau_1( y_1, y_2)=\hat{\theta}^1_{m}|_{\varphi_1}([Y_1,Y_2]),
&m=\min\{i_1,-1\}.
\end{align*}
We will produce local vector fields $\tilde{Y}_\ell\in\Gamma(\mathbb{C}T^{i_\ell}\hat{P}^1)$ in a neighborhood of $\tilde{\varphi}_1$ with which to express $\tilde{\tau}_1( y_1, y_2)$. To declutter notation, we abbreviate the diffeomorphism $R_{\mathbbm{1}+f}$ as $R$. For $i_\ell<0$, take $Z_\ell\in\Gamma(\mathbb{C}T^{i_\ell+1}\hat{P}^1)$ in a neighborhood of $\varphi_1$ such that
\begin{align*}
\hat{\theta}^1_{i_\ell+1}(Z_\ell)=f( y_\ell)\in\g_{i_\ell+1},
\end{align*}
and set
\begin{align*}
\tilde{Y}_\ell=R_*Y_\ell+R_*Z_\ell
\end{align*}
in a neighborhood of $\tilde{\varphi}_1$. Applying the transformation properties described in Proposition \ref{hatP1forms}, first note that when $i_1=-2$,
\begin{align*}
\hat{\theta}^1_{-2}(\tilde{Y}_1)=R^*\hat{\theta}^1_{-2}(Y_1+Z_1)=\hat{\theta}^1_{-2}(Y_1)
=y_1,
\end{align*}
while
\begin{align*}
\hat{\theta}^1_{-1}(\tilde{Y}_1)=R^*\hat{\theta}^1_{-1}(Y_1+Z_1)
=(\hat{\theta}^1_{-1}-f\circ\hat{\theta}^1_{-2})(Y_1+Z_1)
=f( y_1)-f( y_1)
=0,
\end{align*}
and when $i_\ell=-1$,
\begin{align*}
\hat{\theta}^1_{-1}(\tilde{Y}_\ell)=R^*\hat{\theta}^1_{-1}(Y_\ell+Z_\ell)
=(\hat{\theta}^1_{-1}-f\circ\hat{\theta}^1_{-2})(Y_\ell)
= y_\ell,
\end{align*}
while
\begin{align*}
\hat{\theta}^1_{0}(\tilde{Y}_\ell)&=R^*\hat{\theta}^1_{0}(Y_\ell+Z_\ell)\\
&=(\hat{\theta}^1_{0}-(f-f\circ  f)\circ\hat{\theta}^1_{-1}-f\circ\hat{\theta}^1_{0})(Y_\ell+Z_\ell)\\
&=f( y_\ell)-(f-f\circ  f)( y_\ell)-f\circ f( y_\ell)\\
&=0.
\end{align*}
Thus we see that when $i_1<0$,
\begin{align*}
\tilde{\tau}_1( y_1, y_2)=\hat{\theta}^1_{i_1}|_{\tilde{\varphi}_1}([\tilde{Y}_1,\tilde{Y}_2]).
\end{align*}

To address the cases mentioned in Definition \ref{lacdiff}\ref{g-t}, suppose $y_2\in\g_{-1,1}$ which implies $Z_2\in\Gamma(T^{0,2}\hat{P}^1)$ (the instances where $y_2\in\g_{-1,-1}$ can be treated \emph{mutatis mutandis}). Cases (1) and (2) correspond to $i_1=-1$, so we have 
\begin{align*}
\tilde{\tau}_1( y_1, y_2)&=\hat{\theta}^1_{-1}|_{\tilde{\varphi}_1}([\tilde{Y}_1,\tilde{Y}_2])\\
&=R^*\hat{\theta}^1_{-1}|_{\tilde{\varphi}_1}([Y_1+Z_1,Y_2+Z_2])\\
&=(\hat{\theta}^1_{-1}-f\circ\hat{\theta}^1_{-2})|_{\varphi_1}([Y_1,Y_2]+[Z_1,Y_2]+[Y_1,Z_2]+[Z_1,Z_2]).
\end{align*}
Among these terms,
\begin{align*}
&[Z_1,Y_2]+[Y_1,Z_2]\in\Gamma(\mathbb{C}T^{-1}\hat{P}^1)=\ker\hat{\theta}^1_{-2},
&[Z_1,Z_2]\in\Gamma(\mathbb{C}T^{0}\hat{P}^1)=\ker\hat{\theta}^1_{-1}\subset\ker\hat{\theta}^1_{-2}.
\end{align*}
We first consider case (1), so assume $y_1\in\g_{-1,-1}$ and note that $Z_1\in\Gamma(T^{0,-2}\hat{P}^1)$. Here and in the sequel, we will use $\stackrel{\text{r}}{=}$ to indicate any equality that follows from one of the regularity conditions expressed in \eqref{g-reg}, \eqref{bigreg}, or Corollary \ref{equiv-reg}. For example, to continue our calculation we abbreviate $\theta_{0,\pm}=\theta_{0,0}+\theta_{0,\pm2}$ and write
\begin{align*}
\tilde{\tau}_1( y_1, y_2)
&=\tau_1( y_1, y_2)+\hat{\theta}^1_{-1}([Z_1,Y_2]+[Y_1,Z_2])-f\circ\hat{\theta}^1_{-2}([Y_1,Y_2])\\
&\stackrel{\text{r}}{=}\tau_1( y_1, y_2)+[\hat{\theta}^1_{0,-}(Z_1),\hat{\theta}^1_{-1,1}(Y_2)]+[\hat{\theta}^1_{-1,-1}(Y_1),\hat{\theta}^1_{0,+}(Z_2)]-f([\hat{\theta}^1_{-1}(Y_1),\hat{\theta}^1_{-1}(Y_2)])\\
&=\tau_1( y_1, y_2)+[ f( y_1), y_2]+[ y_1,f( y_2)]-f([ y_1, y_2]).
\end{align*}
Case (2) is almost the same; let $y_1\in\g_{-1,1}$ and suppose $f\in\g_{1,-1}$ whereby $Z_1,Z_2\in\Gamma(T^{0,0}\hat{P}^1)$ and we calculate
\begin{align*}
\tilde{\tau}_1( y_1, y_2)
&\stackrel{\text{r}}{=}\tau_1( y_1, y_2)+[\hat{\theta}^1_{0,0}(Z_1),\hat{\theta}^1_{-1,1}(Y_2)]+[\hat{\theta}^1_{-1,1}(Y_1),\hat{\theta}^1_{0,0}(Z_2)]-f([\hat{\theta}^1_{-1}(Y_1),\hat{\theta}^1_{-1}(Y_2)])\\
&=\tau_1( y_1, y_2)+[ f( y_1), y_2]+[ y_1,f( y_2)]-f([ y_1, y_2]).
\end{align*}
For Case (3), take $y_1\in\g_{-2,0}$ and assume $f\in\hat{\g}_{1,-1}$ so that and $Z_1\in\Gamma(T^{-1,-1}\hat{P}^1)$ and $Z_2\in\Gamma(T^{0,0}\hat{P}^1)$. Since $i_1=-2$,
\begin{align*}
\tilde{\tau}_1( y_1, y_2)&=\hat{\theta}^1_{-2}|_{\tilde{\varphi}_1}([\tilde{Y}_1,\tilde{Y}_2])\\
&=R^*\hat{\theta}^1_{-2}|_{\tilde{\varphi}_1}([Y_1+Z_1,Y_2+Z_2])\\
&=\hat{\theta}^1_{-2}|_{\varphi_1}([Y_1,Y_2]+[Z_1,Y_2]+[Y_1,Z_2]+[Z_1,Z_2])\\
&=\tau_1( y_1, y_2)+\hat{\theta}^1_{-2}|_{\varphi_1}([Z_1,Y_2]+[Y_1,Z_2]+[Z_1,Z_2]).
\end{align*}
$\hat{\theta}^1_{-2}$ vanishes on $[Z_1,Z_2]\in \Gamma(\mathbb{C}T^{-1}\hat{P}^1)$, and we are left with
\begin{align*}
\tilde{\tau}_1( y_1, y_2)&=\tau_1( y_1, y_2)+\hat{\theta}^1_{-2}|_{\varphi_1}([Z_1,Y_2]+[Y_1,Z_2])\\
&\stackrel{\text{r}}{=}\tau_1( y_1, y_2)+[\hat{\theta}^1_{-1}(Z_1),\hat{\theta}^1_{-1}(Y_2)]+[\hat{\theta}^1_{-2}(Y_1),\hat{\theta}^1_{0,0}(Z_2)]\\
&=\tau_1( y_1, y_2)+[ f( y_1), y_2]+[ y_1,f( y_2)].
\end{align*}
This completes the proof of equation \eqref{torstan}.

Moving on to \eqref{torstan0}, take $Z_1\in\Gamma(T^{0,0}\hat{P}^1)$ in a neighborhood $\varphi_1$ so that
\begin{align*}
\hat{\theta}^1_{0,0}(Z_1)=f( y_1),
\end{align*}
and define in a neighborhood of $\tilde{\varphi}_1$
\begin{align*}
\tilde{Y}_1=R_*Y_1+R_*Z_1,
\end{align*}
which shows
\begin{align*}
\hat{\theta}^1_{0}|_{\tilde{\varphi_1}}(\tilde{Y}_1)&=R^*\hat{\theta}^1_{0}|_{\tilde{\varphi_1}}(Y_1+Z_1)\\
&=(\hat{\theta}^1_{0}-(f-f\circ  f)\circ\hat{\theta}^1_{-1}-f\circ\hat{\theta}^1_{0})|_{\varphi_1}(Y_1+Z_1)\\
&= y_1+f( y_1)-f( y_1),
\end{align*}
since $f|_{\g_{0,0}}=0$ implies
\begin{align*}
f\circ\hat{\theta}^1_{0}=f\circ\hat{\theta}^1_{0,2}+f\circ\hat{\theta}^1_{0,-2}.
\end{align*}
Thus we see that
\begin{align*}
\tilde{\tau}_1( y_1, y_2)=\hat{\theta}^1_{-1}|_{\tilde{\varphi}_1}([\tilde{Y}_1,\tilde{Y}_2]),
\end{align*}
and we compute
\begin{align*}
\hat{\theta}^1_{-1}|_{\tilde{\varphi}_1}([\tilde{Y}_1,\tilde{Y}_2])&=R^*\hat{\theta}^1_{-1}|_{\tilde{\varphi}_1}([Y_1+Z_1,Y_2+Z_2])\\
&=(\hat{\theta}^1_{-1}-f\circ\hat{\theta}^1_{-2})|_{\varphi_1}([Y_1,Y_2]+[Z_1,Y_2]+[Y_1,Z_2]+[Z_1,Z_2])\\
&=\hat{\theta}^1_{-1}|_{\varphi_1}([Y_1,Y_2]+[Z_1,Y_2])\\
&\stackrel{\text{r}}{=}\hat{\theta}^1_{-1}|_{\varphi_1}([Y_1,Y_2])+[\hat{\theta}^1_{0,0}(Z_1),\hat{\theta}^1_{-1}(Y_2)]\\
&=\hat{\theta}^1_{-1}|_{\varphi_1}([Y_1,Y_2])+[ f( y_1), y_2].
\end{align*}
This proves \eqref{torstan0}. The proof of equation \eqref{torsnew} requires similar arguments and will be omitted.
\end{proof}

\begin{rem}
\label{esentcomp_rem}	
For $y_1\in \g_0$ and $y_2\in \g_{-1}$, Remark \ref{intt} and Proposition \ref{torsion_variation_1} together imply that the only components of $\tau_1(y_1, y_2)$ which are not constant on the fibers of $\hat\pi^1: \hat P^1\to P^0$ are those in $\g_{-1,j}$ when $y_1\in \g_{0,i}$, $y_2\in \g_{-1,j}$, where $j=\pm1$ and $ij=2$.
\end{rem} 

The normalization condition discussed in Remark \ref{Chainrem} is best expressed in relation to the differential $\partial_1$ introduced in Definition \ref{lacdiff}.

\begin{definition}\label{N1_P1_def}
Fix $\mathcal{N}_1\subset \Re(\Hom(\g^0\otimes\g_{-1},\g^0))$ such that $\mathbb{C}\mathcal{N}_1=\mathcal{N}_1\otimes_\mathbb{R}\mathbb{C}$ is a subspace complement to the image of $\hat{\g}_1$ under $\partial_1$,
\begin{align}
\label{firsdirect}
\Hom(\g^0\otimes\g_{-1},\g^0)=\partial_1(\hat{\g}_1)\oplus\mathbb{C}\mathcal{N}_1,
\end{align}
and note that $\tau_1,\tau_\pm$ lie in the left-hand side of \eqref{firsdirect} by trivially extending the domains of $\tau_\pm$ to all of $\g^0\otimes\g_{-1}$. We call $\mathcal{N}_1$ the \emph{first normalization condition}. 
\end{definition}

Define $\Re P^1\subset\Re\hat{P}^1$ to be the subbundle of those $\varphi_1 \in \hat{P}^1$ whose torsion tensor 
\begin{align}\label{tau}
\tau=\tau_1+\tau_-+\tau_+\in\Hom(\g^0\otimes\g_{-1},\g^0)
\end{align}
is \emph{normalized}; i.e, $\Re(\tau)\in\mathcal{N}_1$. Complexifying, $P^1\subset\hat{P}^1$ is the subbundle of frames whose torsion tensor lies in $\mathbb{C}\mathcal{N}_1$. Denote $\pi^1=\hat{\pi}^1|_{P^1}$.

The remainder of this section is dedicated to proving that $\ker\partial_1=\g_1$, the first bigraded prolongation of the symbol $\g^0$, as defined in section \ref{universalsec}.

Fix $\varphi\in P^0$ and take $\varphi_1,\tilde{\varphi}_1\in P^1$ in the fiber $(\pi^1)^{-1}(\varphi)$ related by $\tilde{\varphi}_1=\varphi_1+\varphi_1\circ f$ for $f\in\ker\partial_1$. The torsion tensors \eqref{tau} of $\varphi_1,\tilde{\varphi}_1$ will be denoted by $\tau$ and $\tilde{\tau}$, respectively. By Definition \ref{lacdiff}, Proposition \ref{torsion_variation_1} gives 
\begin{align}\label{t0}
&\tilde{\tau}(y_1,y_2)=\tau(y_1,y_2)+\partial_1f(y_1,y_2),
&y_1\in\g_0,y_2\in\g_{-1},
\end{align}
so each of the bigraded components of $\partial_1f(y_1,y_2)$ identified in Definition \ref{lacdiff}\ref{g0t} vanish separately, and in particular $[f(y_1),y_2]=0$ shows 
\begin{equation}
\label{deg1}
f|_{\g_0}=0,
\end{equation}	
hence $f$ is an endomorphism of $\g^0$ that has degree 1 with respect to the first weight. Thus, in contrast to the bundle $\hat \pi^1: \hat P^1\to P^0$, the bundle $\pi^1:P^1\to P^0$ is an affine bundle whose fibers have modeling vector space $\ker\partial_1\subset\hat{\g}_1$. 

By the observation in part \ref{g-t} of Definition \ref{lacdiff}, $\ker\partial_1$ consists of degree $1$ derivations of $\g^0$, whence
\begin{equation*}
\ker\partial_1\subset \widetilde\g_1\cap\hat\g_1= \widetilde\g_{1,-1}\oplus\widetilde\g_{1,1},
\end{equation*}
where $\widetilde\g_1$ is the first standard algebraic Tanaka prolongation of $\g^0$ as in \eqref{firstalgTanaka}. Furthermore, \eqref{t0} shows
$$[ y_1,f( y_2)]-f([y_1, y_2])=0, \quad y_1\in\g_{0,\pm2},\quad y_2\in\g_{-1,\mp1},\quad f\in \ker\partial_1\cap \widetilde \g_{1, \pm 1},$$
and by definition of the Lie brackets between elements of nonnegative degree in the standard Tanaka prolongation, this is equivalent to 
\begin{equation*}
[f, \g_{0, \pm 2}]=0,\quad \forall f\in \ker\partial_1\cap \widetilde \g_{1, \pm 1}.  
\end{equation*}
Hence, if we set
$$\mathfrak u_{1,-1}:=\{f\in \widetilde \g_{1, -1} \ |\  [f, \g_{0, -2}]=0\},\quad \mathfrak u_{1,1}:=\{f\in \widetilde \g_{1, 1}\ |\  [f, \g_{0, 2}]=0\},$$
then 
\begin{equation}
\label{g_1prelim}
\ker\partial_1=\mathfrak u_{1, -1}\oplus \mathfrak u_{1,1}.
\end{equation}
Now \eqref{firstalgbigrad1}-\eqref{firstalgbigrad2} say that in order to prove $\mathfrak u_{1,\pm 1}=\g_{1,\pm1}$, it remains to verify 
\begin{equation}
\label{g_1goal}
[\g_{0, \pm 2},\mathfrak u_{1, \mp 1}]\subset \mathfrak u_{1, \pm 1} .
\end{equation}

Before we can proceed, we must discuss a geometric consequence of \eqref{deg1} for the fibration $\pi:P^0\to M$. Given a subbundle  $\Delta\subset\mathbb{C}TM$, a subbundle $\widehat \Delta\subset\mathbb{C}TP^0$ is called a \emph{lift of $\Delta$ to $P^0$} if it has the same rank as $\Delta$ and $\pi_*\widehat\Delta=\Delta$. Equation \eqref{deg1} tells us that the subspace 
$$L^{0,2}_\varphi:=\varphi_1(\g_{0,2})\subset T^{0,2}P^0$$ 
is independent of the choice of $\varphi_1$ in the fiber. Furthermore, $\pi_* (L^{0,2}_\varphi)=K_{\pi(\varphi)}$, so $L^{0,2}$ is a lift of the Levi kernel $K$ which we call the \emph{canonical lift of the Levi kernel $K$ to $P^0$ subordinated to the normalization condition $\mathcal N_1$}. In the same way, one can also define the canonical lift $L^{0, -2}\subset T^{0,-2}P^0$ of $\overline{K}$ to $P^0$, subordinated to $\mathcal N_1$. 

\begin{rem}
\label{uninormrem}
 Let $$\mathrm{pr}: \Hom(\g^0\otimes\g_{-1},\g^0)\rightarrow \bigoplus _{ j=\pm1, ij=2}\mathrm{Hom}(\g_{0,i}\otimes\g_{-1,j},\g_{-1,j})$$ be the natural  projection. By Remark \eqref{esentcomp_rem}, $\mathrm{pr}(\tau_1)$ is the only nontrivial (i.e., independent of the CR symbol) component of the restriction of  
$\tau_1$ to $\g_0\otimes \g_{-1}$. Thus, the canonical lifts of $K$ and $\overline{K}$ to $P^0$ subordinated to $\mathcal N_1$ are in fact determined by $\mathrm{pr}(\mathcal{N}_1)$. Note: it is not necessarily true that any lift of $K$ is a canonical lift subordinated to some normalization condition. However, if for every $\varphi_1\in (\hat \pi_1)^{-1}(\varphi)$ satisfying 
\begin{equation}
\label{phi1L2}
\varphi_1(\g_{0,\pm 2})=L^{0, \pm 2}_\varphi,
\end{equation}
the condition 
\begin{equation}
\label{univnorm}
\mathrm{pr}(\tau_1|_{\varphi_1})=0
\end{equation}
holds, 
then $L^{0, 2}_\varphi$ and $L^{0, -2}_\varphi$ are the canonical lifts of $K$ and $\overline{K}$, respectively, subordinated to an arbitrary chosen  normalization condition. 
\end{rem}	
\begin{rem} 
	\label{splitrem}
The splitting  
	\begin{equation}
	\label{spliteq}
\mathbb{C}T^0P^0=L^{0,-2}\oplus T^{0,0}P^0\oplus L^{0,2}		
	\end{equation}
allows for a canonical extension of the form $\theta_{0,0}$ from $T^{0,0}P^0$ to all of $\mathbb{C}T^0P^0$ by composing it with the linear projection onto $T^{0,0}P^0$, whence 
	$$\theta_0:=\theta_{0,-2}+\theta_{0,0}+\theta_{0,2}$$
	is a  $\g_0$-valued form acting isomorphically on all of $\mathbb{C}T^0P^0$.
\end{rem}

 Let $\iota_1:P^1\hookrightarrow\hat{P}^1$ be the inclusion map, and denote by $\theta_i^1$ and $\theta_{i,j}^1$ the forms on $P^1$, obtained by the pull-back with respect to $\iota_1$ of the corresponding forms $\hat \theta_i^1$ and $\hat \theta_{i,j}^1$ on $\hat P^1$. The splitting \eqref {g_1prelim} 
determines bigraded components 
\begin{align*}
{\theta}^1_{1,\pm1}:T^{1,\pm1}P^1\to{\mathfrak u}_{1,\pm1},
\end{align*}
where the subbundle $T^{1,\pm1}P^1\subset\mathbb{C}T^1P^1$ is trivialized by $\zeta_f$ for $f\in\mathfrak u_{1,\pm1}$. From \eqref{deg1} and  relations \eqref{T1_ZV} in Corollary \ref{T1_brax} it follows that if $Z\in\Gamma(\mathbb{C}T^1\hat{P}^1)$ and  $V\in\Gamma(\mathbb{C}T^{0}\hat{P}^1)$ such that $\hat{\theta}^1_{0}(V)= v\in\g_0$, then 
$$\theta^1_{0}([Z,V])=0,$$
which shows $[Z, V]\in \Gamma(\mathbb{C}T^1\hat{P}^1)$; i.e., $\theta_1^1([Z, V])$ is well-defined.

To prove \eqref{g_1goal}, we now note that the space $\ker\partial_1$ depends on the CR symbol $\g^0$ only. In other words, it is independent of the choice of a $2$-nondegenerate CR structure with symbol $\g^0$ for which one implements the first prolongation, and of the choice of the first normalization condition. We may as well take the flat, 2-nondegenerate CR structure with symbol $\g^0$ as described in the last paragraph of section 2. In this case, the bundle $\Re P^0$ can be identified with the Lie group $\Re G^0$. Let $L^{0,\pm 2}$ be the left invariant distributions on $G^0$ obtained by left translations of $\g_{0,\pm 2}$. Then by definition of $\g_{0,\pm 2}$, \eqref{univnorm} holds for all $\varphi_1$ satisfying \eqref{phi1L2}. Hence, by Remark \ref{uninormrem} the spaces $L^{0,2}$  and $ L^{0,-2}$ are canonical lifts of $K$ and $\overline{K}$, respectively, subordinated to some normalization condition.

\begin{rem}
\label{Maurerrem}
By construction, the form $\theta_{0}$, defined on $\mathbb{C}T^0P^0$ as in Remark \ref{splitrem} coincides with the restriction of the Maurer-Cartan form of $G^0$ to $\mathbb{C}T^0P^0$.
\end{rem}

Now, in addition to Corollary \ref{T1_brax} we have the following:
\begin{cor}\label{T1_brax_add}
Let $M$ be the flat, 2-nondegenerate CR structure with symbol $\g^0$ as described in the last paragraph of section 2. Let $Z\in\Gamma(\mathbb{C}T^1{P}^1)$ and $i=-2,-1$. Then for $V\in\Gamma(\mathbb{C}T^{0}{P}^1)$ such that ${\theta}^1_{0}(V)= v\in\g_0$ and $Y\in\Gamma(\mathbb{C}T^i{P}^1)$ such that ${\theta}^1_{i}(Y)= y\in\g_i$, we have 
\begin{align}\label{bigreg1}
&{\theta}^1_{1,\pm1}([Z,V])(y)=[{\theta}^1_{1,\mp1}(Z)(y),v]+
{\theta}^1_{1,\mp1}(Z)([v,y])
&\text{when}
&&v\in\g_{0,\pm2},
&&y\in\g_{-1,\mp1}.
\end{align}
\end{cor}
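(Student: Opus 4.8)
The plan is to verify \eqref{bigreg1} by a Jacobi-identity computation that reduces every bracket to the Lie algebra structure of $\g^0$, using the bigraded regularity relations together with the Maurer--Cartan property of the flat model (Remark \ref{Maurerrem}); the resulting identity is then precisely the algebraic inclusion \eqref{g_1goal}, so this Corollary is exactly the geometric input needed to finish the proof that $\mathfrak u_{1,\pm1}=\g_{1,\pm1}$. As in Lemma \ref{torsion_1_def} and Corollary \ref{T1_brax}, both sides of \eqref{bigreg1} depend only on the soldering values $f=\theta^1_{1,\mp1}(Z)$, $v=\theta^1_0(V)$, and $y=\theta^1_{-1}(Y)$, so it suffices to establish the formula for convenient representatives.

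First I would fix $Z\in\Gamma(T^{1,\mp1}P^1)$ with $\theta^1_{1,\mp1}(Z)=f$ and $\theta^1_{1,\pm1}(Z)=0$, and recall from the discussion preceding the Corollary that $[Z,V]\in\Gamma(\mathbb{C}T^1P^1)$. Applying the second relation in \eqref{T1_ZY} to the vertical field $[Z,V]$ and to $Y$ (with $\theta^1_0(Y)=0$), and extracting the $\g_{0,0}$-component (the $(1,\mp1)$-part of $\theta^1_1([Z,V])$ sends $y\in\g_{-1,\mp1}$ into $\g_{0,\mp2}$, not $\g_{0,0}$), gives
\[
\theta^1_{0,0}([[Z,V],Y])=\theta^1_{1,\pm1}([Z,V])(y).
\]
The Jacobi identity then rewrites the left-hand side as
\[
\theta^1_{0,0}([[Z,V],Y])=\theta^1_{0,0}([Z,[V,Y]])-\theta^1_{0,0}([V,[Z,Y]]),
\]
and I would evaluate the two terms separately.

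For the first term I would take $V$ and $Y$ to be lifts to $P^1$ of the left-invariant vector fields on $P^0=\Re G^0$ associated to $v$ and $y$. Since $\theta_0$ is the restriction of the Maurer--Cartan form (Remark \ref{Maurerrem}) and the splitting of Remark \ref{splitrem} identifies $\theta_0$ with Maurer--Cartan on all of $\mathbb{C}T^0P^0$, the bigraded regularity \eqref{bigreg} gives $\theta^1_{-1}([V,Y])=[v,y]\in\g_{-1,\pm1}$, while the Maurer--Cartan equation forces $\theta^1_0([V,Y])=0$ because $[v,y]$ has no $\g_0$-component. Then the second relation in \eqref{T1_ZY}, applied to $Z$ and $[V,Y]$, yields $\theta^1_{0,0}([Z,[V,Y]])=f([v,y])$. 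For the second term, \eqref{T1_ZY} gives $\theta^1_0([Z,Y])=f(y)\in\g_{0,\mp2}$; since $[Z,Y]$ and $V$ are sections of $\mathbb{C}T^0P^1$ with constant $\theta^1_0$-values $f(y)$ and $v$, the Maurer--Cartan equation computes $\theta^1_{0,0}([V,[Z,Y]])=[v,f(y)]$, where the regularity condition \eqref{regcond} is exactly what guarantees $[v,f(y)]\in\g_{0,0}$. Combining the two terms and using $-[v,f(y)]=[f(y),v]$ produces $\theta^1_{1,\pm1}([Z,V])(y)=[f(y),v]+f([v,y])$, which is \eqref{bigreg1}.

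The main obstacle is the honest treatment of the degree-$0$ brackets. The relations on $\Gamma(\mathbb{C}T^{-1}P^1)$, where \eqref{T1_ZY} and \eqref{bigreg} apply directly, are not available for $\theta^1_0([V,Y])$ and $\theta^1_{0,0}([V,[Z,Y]])$, which are brackets of sections of $\mathbb{C}T^0P^1$ and are governed by none of the earlier regularity corollaries. This is precisely why the statement is restricted to the flat model: the identification of $\theta_0$ with the Maurer--Cartan form (Remark \ref{Maurerrem}) and the splitting of Remark \ref{splitrem} let me replace these geometric brackets by Lie brackets in $\g^0$, with \eqref{regcond} ensuring the output lands in $\g_{0,0}$. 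The delicate bookkeeping is to confirm that the left-invariant lifts of $V$ and $Y$ do carry the prescribed soldering values and have vanishing higher-filtration components, so that \eqref{T1_ZY} and \eqref{bigreg} are legitimately applicable to $[V,Y]$ and $[Z,Y]$; once this is verified the computation is forced entirely by the structure equations of $\mathfrak U_{\text{bigrad}}(\g^0)$.
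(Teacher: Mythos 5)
Your proof is correct and follows essentially the same route as the paper's: reduce to $Z=\zeta_f$, apply the Jacobi identity to $[[Z,V],Y]$, evaluate $[Z,[V,Y]]$ via \eqref{bigreg} and \eqref{T1_ZY}, and handle the remaining degree-zero bracket using the Maurer--Cartan interpretation of $\theta_0$ on the flat model (Remark \ref{Maurerrem}). The only cosmetic difference is that you extract the $\g_{0,0}$-component at the outset rather than at the end.
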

\begin{proof}  Once again, it suffices to consider vector fields $Z=\zeta_f$ for $f\in\hat{\g}_1$. Let $v\in\g_{0,2}$, $y\in\g_{-1,-1}$, and $f\in\mathfrak u_{1,-1}$. Using relation \eqref{T1_ZY} from  Corollary \ref{T1_brax} and the Jacobi identity, we have 
\begin{align*}
\theta^1_1([\zeta_f,V])(y)={\theta}^1_0([[\zeta_f,V],Y])={\theta}^1_0([[\zeta_f,Y],V])+{\theta}^1_0([\zeta_f,[V,Y]])
\end{align*}
Remark \ref{Maurerrem} and relation \eqref{T1_ZY} imply ${\theta}^1_0([[\zeta_f,Y],V])=[f(y),v]$. With the biweights of all vectors in mind, \eqref{bigreg} shows 
\begin{align*}
f([v,y])={\theta}^1_{1,-1}(\zeta_f)({\theta}^1_{-1,1}([V,Y]))=\hat{\theta}^1_{0,0}([\zeta_f,[V,Y]]),
\end{align*}
so the fact that the left-hand side of \eqref{bigreg1} has biweight $(0,0)$ proves the result. The arguments for $v\in\g_{0,-2}$, $y\in\g_{-1,1}$, and $f\in\mathfrak u_{1,1}$ are the same.
\end{proof}

At last, we are ready to prove \eqref{g_1goal}. For $f\in \mathfrak u_{1,\mp 1}$, substituting $Z=\zeta_f$ into \eqref{bigreg1} yields
\begin{align*}
&{\theta}^1_{1,\pm1}([\zeta_f,V])(y)=[f(y),v]+
f([v,y])=[f, v](y)
&\text{when}
&&v\in\g_{0,\pm2},
&&y\in\g_{-1,\mp1}.
\end{align*}	
In order to get the last equality, we used the Jacobi identity in the standard universal Tanaka prolongation $\mathfrak U(\g^0)$. From the nondegeneracy condition for $\mathfrak U(\g^0)$ and the fact that $\g_-$ is generated by $\g_{-1}$ it follows that $[f,v]={\theta}^1_{1,\pm1}([\zeta_f,V])$ and in particular that $[f, v]\in \mathfrak u_{1, \pm 1}$, which completes the proof of \eqref{g_1goal} and therefore the proof that $\ker\partial_1=\g_1$.

\subsubsection{Higher geometric prolongations}

As mentioned in the sentence before formula \eqref{arbalgTanaka}, for $\kappa\geq 2 $ the space $\g_\kappa=\bigoplus_{j\in\mathbb Z}\g_{\kappa,j}$ of all elements in $\mathfrak U_{\text{bigrad}}(\g^0)$ of first weight $\kappa$ is exactly the same as the degree-$\kappa$ component of the standard Tanaka algebraic prolongation of $\g_-\oplus \g_0 \oplus \g_1$. With the bundle $P^1$ having been constructed in the previous subsection, the construction of remaining bundles $P^\kappa$ in the chain \eqref{bundles1} is also the same as in the standard Tanaka theory, and we will not repeat it, instead referring the reader to the Tanaka's original paper \cite{tanaka1} or to the second author's \cite{zeltan}. For completeness, we briefly describe the nature of the bundles $P^\kappa$ and indicate how their construction ultimately produces an absolute parallelism.

We proceed by induction. Fix $\kappa\geq2$ and suppose we have a bundle $\pi^{\kappa-1}:P^{\kappa-1}\to P^{\kappa-2}$ with graded and bigraded filters
\begin{align*}
&\mathbb{C}T^iP^{\kappa-1},
&T^{i,j}P^{\kappa-1},
&&(i,j)\in I(\g^{\kappa-1}),
\end{align*}
and in particular
\begin{align*}
\mathbb{C}T^{\kappa-1} P^{\kappa-1}=\ker\pi^{\kappa-1}_*.
\end{align*}
Suppose further that there are soldering forms
\begin{align*}
&\theta^{\kappa-1}_{i}:\mathbb{C}T^{-2}P^{\kappa-1}\to\g_i,
&&(-2\leq i\leq \kappa-3);\\
&\theta^{\kappa-1}_{\kappa-2}:\mathbb{C}T^{-1}P^{\kappa-1}\to\g_{\kappa-2},
&&\theta^{\kappa-1}_{\kappa-1}:\mathbb{C}T^{\kappa-1} P^{\kappa-1}\to\g_{\kappa-1},
\end{align*}
such that $\theta^{\kappa-1}_{\kappa-1}$ is an isomorphism and the others restrict and descend to isomorphisms
\begin{align*}\tag{$-2\leq i\leq\kappa-2$}
\theta^{\kappa-1}_i|_{\mathbb{C}T^{i}P^{\kappa-1}}:\mathbb{C}T^{i}P^{\kappa-1}/\mathbb{C}T^{i+1}P^{\kappa-1}\to\g_{i}.
\end{align*}

Define the bundle $\hat{\pi}^{\kappa}:\hat{P}^{\kappa}\to P^{\kappa-1}$ whose fiber over $\varphi\in P^{\kappa-1}$ is composed of maps
\begin{align*}
\varphi_\kappa\in\Hom(\g_{-2},\mathbb{C}T^{-2}_{\varphi}P^{\kappa-1}/\mathbb{C}T^{\kappa-1}_{\varphi}P^0)\oplus\bigoplus_{(i,j)\in I(\g_{-1}\oplus\g_0\oplus\dots\oplus\g_{\kappa-1})}\Hom(\g_{i,j},T^{i,j}_{\varphi}P^{\kappa-1}),
\end{align*}
which satisfy
\begin{align*}
&\theta^{\kappa-1}_{i}\circ\varphi_\kappa|_{\g_{i}}=\mathbbm{1}_{\g_{i}};
&&\g_i\subset\g^{\kappa-1},\\
&\theta^{\kappa-1}_{i_1+i_2}\circ\varphi_\kappa|_{\g_{i_1}}=0,
&&-2\leq i_1\leq \kappa-3, &&1\leq i_2\leq \min\{\kappa-1,\kappa-2-i_1\}
\end{align*}
where $\mathbbm{1}_{\g_{i}}$ is the identity map on $\g_{i}$ The subbundle $\Re\hat{P}^\kappa\subset\hat{P}^\kappa$ is that of frames $\varphi_\kappa$ which additionally satisfy $\varphi_\kappa(\overline{ y})=\overline{\varphi_\kappa( y)}$
for $ y\in\g^{\kappa-1}$.

The fibers of $\hat{P}^\kappa$ are affine spaces whose modeling vector space $\hat{\g}_\kappa\subset\Hom(\g^{\kappa-1},\g^{\kappa-1})$ is that of linear maps $f$ given by sums of bigraded maps
\begin{align*}
&f^{m,j_2}_{i_1,j_1}:\g_{i_1,j_1}\to\g_{i_1+m,j_1+j_2},&(i_1,j_1),(i_1+m,j_1+j_2)\in I(\g^{\kappa-1}),
&&m=\min\{\kappa,\kappa-1-i_1\},
\end{align*}
which vanish on $\g_{\kappa-1}$. The fibers of $\Re\hat{P}^\kappa$ are modeled by those $f\in\Re\hat{\g}_\kappa$ with $\overline{f}=f$ as defined by \eqref{conjext}.

The bundle $\hat{\pi}^\kappa:\hat{P}^\kappa\to P^{\kappa-1}$ inherits a filtration as before,
\begin{align*}
&\mathbb{C}T^i\hat{P}^\kappa=(\hat{\pi}^\kappa_*)^{-1}(\mathbb{C}T^iP^{\kappa-1}),
&T^{i,j}\hat{P}^\kappa=(\hat{\pi}^\kappa_*)^{-1}(T^{i,j}P^{\kappa-1}),
&&(i,j)\in I(\g^{\kappa-1}),
\end{align*}
where each subbundle contains the vertical bundle
\begin{equation*}
\mathbb{C}T^\kappa\hat{P}^\kappa=\ker\hat{\pi}^\kappa_*.
\end{equation*}
Mimicking the proof of Proposition \ref{hatP1forms} and the constructions of $\theta_{0,0}$ and $\hat{\theta}^1_1$, we can show that there exist intrinsically defined forms
\begin{equation}\label{thetak}
\begin{aligned}
\hat{\theta}^{\kappa}_{i}&:\mathbb{C}T^{-2}\hat{P}^{\kappa}\to\g_i,
&&(-2\leq i\leq \kappa-2);\\
\hat{\theta}^{\kappa}_{\kappa-1}&:\mathbb{C}T^{-1}\hat{P}^{\kappa}\to\g_{\kappa-1},
&&\hat{\theta}^{\kappa}_{\kappa}:\mathbb{C}T^{\kappa}\hat{P}^{\kappa}\to\hat{\g}_{\kappa},
\end{aligned}
\end{equation}
the first $\kappa+2$ of which restrict to give the pullbacks of the soldering forms on $P^{\kappa-1}$ while the last is an isomorphism.

One can define the torsion tensor $\tau_\kappa\in\Hom(\g^{\kappa-2}\wedge\g_{-1},\g^{\kappa-2})$ associated to $\varphi_\kappa\in\hat{P}^\kappa$ by analogy with $\tau_i$, $i<\kappa$. Note that no analog of $\tau_\pm$ from Lemma \ref{torsion_1_def} appears in higher prolongations. A \emph{normalization condition for the $\kappa^{\text{th}}$ geometric prolongation} is choice of a subspace $\mathcal{N}_\kappa\subset \Re(\Hom(\Lambda^2\g^{\kappa-1},\g^{\kappa-1}))$ whose complexification is complementary to the image of $\hat{\g}_\kappa$ under
\begin{align*}\tag{as defined in \cite{zeltan}}
\partial_\kappa:\Hom(\g^{\kappa-1},\g^{\kappa-1})\to\Hom(\g^{\kappa-1}\wedge\g_{-1},\g^{\kappa-1}),
\end{align*}
so that
\begin{align}
	\label{kdirect}
\Hom(\Lambda^2\g^{\kappa-1},\g^{\kappa-1})=\partial_\kappa(\hat{\g}_\kappa)\oplus\mathbb{C}\mathcal{N}_\kappa.
\end{align}



Once $\mathcal{N}_\kappa$ is chosen, $\Re P^1\subset\Re\hat{P}^1$ is the subbundle of those $\varphi_\kappa$ whose torsion tensors are \emph{normalized}; i.e., $\tau_\kappa\in\mathcal{N}_\kappa$, while $P^\kappa\subset\hat{P}^\kappa$ is the subbundle whose torsion tensors lie in $\mathbb{C}\mathcal{N}_\kappa$. Set $\pi^\kappa=\hat{\pi}^\kappa|_{P^\kappa}$, pull back $\hat{\theta}^\kappa$ along the inclusion $P^\kappa\hookrightarrow\hat{P}^\kappa$ to get $\theta^\kappa$ on $P^\kappa$, and iterate.

Note that the fibers of $\pi^\kappa:P^\kappa\to P^{\kappa-1}$ are isomorphic to $\g_\kappa$. Hence, the fibers are trivial for $\kappa\geq l+1$, and $\pi^\kappa$ is a diffeomorphism. By \eqref{thetak}, all of the nontrivial soldering forms $\theta^\kappa_i$ for $i\leq l$ are true one forms -- defined on all of $\mathbb{C}TP^\kappa$ -- as of $\kappa=l+2$. Thus, they may be assembled into a $\mathfrak{U}_{\text{bigrad}}(\g^0)$-valued parallelism on $P^{l+2}$, or by restriction, a $\Re\mathfrak{U}_{\text{bigrad}}(\g^0)$-valued parallelism on $\Re P^{l+2}$.

\subsection{Sketch of the proof of part (2) of Theorem \ref{maintheor}}
Consider the manifold $M_0$ with the flat CR structure of type $\g^0$ as described in the paragraph before Definition \ref{flatdef}.
In this section we show that the Lie algebra $\mathcal A$ of (germs of) infinitesimal symmetries of $M_0$ is isomorphic to $\Re\g$ where $\g=\mathfrak{U}_{\text{bigrad}}(\g^0)$. We hew to the arguments and notation of \cite[section 6]{tanaka1} (see also \cite[subsections 2.2-2.3]{yamag}), emphasizing those aspects of the construction that require modification.

Recall that $G^0$ and its closed subgroup $G_{0,0}$ have Lie algebras $\g^0$ and $\g_{0,0}$, respectively, whose vectors we interpret as left-invariant vector fields on $G^0$. Name the homogeneous spaces
\begin{align*}
&M_0^\mathbb{C}=G^0/G_{0,0},
&M_0=\Re G^0/\Re G_{0,0},
\end{align*}
and observe $TM_0^\mathbb{C}\cong \mathbb{C}TM_0$ has tangent spaces $\m$ as in \eqref{m}. Via the projection $\pi:G^0\to M_0^\mathbb{C}$, we have subbundles,
\begin{align*}
&D=\pi_*\Re(\g_{-1}\oplus\g_0)\subset TM_0,
&H,\overline{H}=\pi_*(\g_{-1,\pm 1}\oplus\g_{0,\pm 2})\subset\mathbb{C}TM_0,
&&K,\overline{K}=\pi_*(\g_{0,\pm 2})\subset\mathbb{C}TM_0,
\end{align*}
and we reiterate that $K$ is exactly the subbundle of $H$ satisfying $[K,\overline{H}]\subset H\oplus\overline{H}$.

Let $\xi\in\Omega^1(G^0,\g^0)$ be the left-invariant Maurer-Cartan form of $G^0$ which satisfies the equations
\begin{align}\label{MC}
&\dd\xi_{i,j}(\hat{X},\hat{Y})=-\sum_{\begin{smallmatrix}i_1+i_2=i\\j_1+j_2=j\end{smallmatrix}}[\xi_{i_1,j_1}(\hat{X}),\xi_{i_2,j_2}(\hat{Y})],&\hat{X},\hat{Y}\in\Gamma(TG^0).
\end{align}
Here and in what follows, we implicitly use the convention that a quantity indexed by $(i,j)\notin (\g^0)$ -- and later, $(i,j)\notin I(\g)$ -- is trivial. Note that $\pi_*:TG^0\to TM_0^{\mathbb{C}}$ is a linear isomorphism on each fiber of the subbundle $\ker\xi_{0,0}\subset TG^0$, so for any $X\in\Gamma(\mathbb{C}TM_0)$ there is a unique $\hat{X}\in\Gamma(TG^0)$ such that $\xi_{0,0}(\hat{X})=0$ and $\pi_*\hat{X}=X$. In particular, the value of $\hat{f}_X=\xi(\hat{X})\in\m$ is constant on the fibers of $\pi$, hence descends to a well-defined function
\begin{align*}
f_X:M_0\to\m,
\end{align*}
whose bigraded components are denoted $f^{i,j}_X$. For $X,Y\in\Gamma(\mathbb{C}TM_0)$, \eqref{MC} implies
\begin{align*}
\xi([\hat{X},\hat{Y}])&=\hat{X}\xi(\hat{Y})-\hat{Y}\xi(\hat{X})-\dd\xi(\hat{X},\hat{Y})\\
&=\dd f_Y(X)-\dd f_X(Y)+[f_X,f_Y],
\end{align*}
and if $\xi_{i,j}(\hat{Y})=\xi_{i,j}([\hat{X},\hat{Y}])=0$, then we can say
\begin{align*}
&\dd f^{i,j}_X(Y)=\sum_{\begin{smallmatrix}i_1+i_2=i\\j_1+j_2=j\end{smallmatrix}}[f^{i_1,j_1}_X,f^{i_2,j_2}_Y],
&(i,j)\in I(\m).
\end{align*}

The algebra $\mathcal{A}$ describes the sheaf of \emph{infinitesimal CR symmetries} of $M_0$; i.e., vector fields $X\in\Gamma(TM_0)$ which satisfy
\begin{align*}
&[X,D]\subset D,
&[X,H]\subset H,
&&(\Rightarrow [X,\overline{H}]\subset\overline{H}).
\end{align*}
The Jacobi identity shows $[X,[K,\overline{H}]]\subset H\oplus\overline{H}$, whence infinitesimal CR symmetries additionally satisfy $[X,K]\subset K$ and its conjugate. For a symmetry $X\in\mathcal{A}$, we have the following instances where it is necessarily true that $\xi_{i,j}(\hat{Y})=\xi_{i,j}([\hat{X},\hat{Y}])=0$:
\begin{itemize}
	\item $Y\in H\oplus\overline{H}$ and $(i,j)=(-2,0)$,
	
	\item $Y\in H$ and $(i,j)\neq(-1,1),(0,2)$,
	
	\item $Y\in\overline{H}$ and $(i,j)\neq(-1,-1),(0,-2)$,
	
	\item $Y\in K$ and $(i,j)\neq(0,2)$,
	
	\item $Y\in \overline{K}$ and $(i,j)\neq(0,-2)$.
\end{itemize}
Consequently, for any of these $Y$ we have
\begin{align*}
\dd f^{-2,0}_X(Y)&=[f_X^{-1,1},f^{-1,-1}_Y]+[f_X^{-1,-1},f^{-1,1}_Y],
\end{align*}
or more generally
\begin{align}\label{df-2}
\dd \hat{f}^{-2,0}_X\equiv[\hat{f}_X^{-1,1},\xi_{-1,-1}]+[\hat{f}_X^{-1,-1},\xi_{-1,1}]\mod\{\xi_{-2,0}\}.
\end{align}
Likewise,
\begin{equation}\label{df-10}
\begin{aligned}
\dd \hat{f}^{-1,1}_X&\equiv[\hat{f}_X^{0,2},\xi_{-1,-1}]+[\hat{f}_X^{-1,-1},\xi_{0,2}]&\mod\{\xi_{-2,0},\xi_{-1,1}\},\\
\dd \hat{f}^{0,2}_X&\equiv0&\mod\{\xi_{-2,0},\xi_{-1,1},\xi_{0,2}\},
\end{aligned}
\end{equation}
with the corresponding statements for $\dd \hat{f}^{-1,-1}_X,\dd \hat{f}^{0,-2}_X$ given by changing the signs of the second indices.

Define $f_X^{0,0}:M_0\to\m\otimes\m^*$ to be the restriction of $\dd f^{i,j}_X$ to those $Y\in\Gamma(\mathbb{C}TM_0)$ that take constant values $\xi(\hat{Y})\in\g_{i,j}$ for $(i,j)\in I(\m)$, and set $\hat{f}^{0,0}_X=\pi^*f^{0,0}_X$. Now \eqref{df-2} becomes an equality
\begin{align*}
\dd \hat{f}^{-2,0}_X=[\hat{f}_X^{0,0},\xi_{-2,0}]+[\hat{f}_X^{-1,1},\xi_{-1,-1}]+[\hat{f}_X^{-1,-1},\xi_{-1,1}],
\end{align*}
and we strengthen \eqref{df-10},
\begin{equation}\label{2df-10}
\begin{aligned}
\dd \hat{f}^{-1,1}_X&\equiv[\hat{f}_X^{0,2},\xi_{-1,-1}]+[\hat{f}_X^{0,0},\xi_{-1,1}]+[\hat{f}_X^{-1,-1},\xi_{0,2}]&\mod\{\xi_{-2,0}\},\\
\dd \hat{f}^{0,2}_X&\equiv[\hat{f}_X^{0,0},\xi_{0,2}]&\mod\{\xi_{-2,0},\xi_{-1,1}\}.
\end{aligned}
\end{equation}
That $f^{0,0}_X$ takes values in $\mathfrak{der}(\g_-)$ is straightforward to confirm; e.g., for $Y_1\in\g_{-1,1}$ and $Y_2\in\g_{-1,-1}$, we compute
\begin{align*}
f_X^{0,0}[Y_1,Y_2]
&=\dd f^{-2,0}_X([Y_1,Y_2])\\
&=Y_1\dd f_X^{-1,-1}(Y_2)-Y_2\dd f_X^{-1,1}(Y_1)\\
&=[Y_1,f_X^{0,0}(Y_2)]+[f_X^{0,0}(Y_1),Y_2].
\end{align*}
Furthermore, $[f^{0,0}_X,\g_{0,\pm2}]\subset\g_{0,\pm2}$ holds by virtue of $\dd f^{0,\pm2}_X\in\Omega^1(M_0,\g_{0,\pm2})$, so we see that $f^{0,0}_X$ takes values in $\g_{0,0}$.

In direct analogy to \cite{tanaka1} and \cite{yamag}, higher-order derivatives $f^{i,j}_X:M_0\to\g_{i,j}$ ($i\geq1$) of $f_X$ (and their lifts $\hat{f}^{i,j}_X=\pi^*f^{i,j}_X$) are defined so that we arrive at the fully determined equations
\begin{align}\label{df}
&\dd \hat{f}^{i,j}_X=\sum_{\begin{smallmatrix}i_1+i_2=i\\j_1+j_2=j\end{smallmatrix}}[\hat{f}^{i_1,j_1}_X,\xi_{i_2,j_2}]
&(i,j)\in I(\g),
&&(i_2,j_2)\in I(\m).
\end{align}
In particular, \eqref{2df-10} becomes
\begin{equation}\label{3df-10}
\begin{aligned}
\dd \hat{f}^{-1,1}_X&=[\hat{f}_X^{1,1},\xi_{-2,0}]+[\hat{f}_X^{0,2},\xi_{-1,-1}]+[\hat{f}_X^{0,0},\xi_{-1,1}]+[\hat{f}_X^{-1,-1},\xi_{0,2}],\\
\dd \hat{f}^{0,2}_X&=[\hat{f}_X^{2,2},\xi_{-2,0}]+[\hat{f}_X^{1,1},\xi_{-1,1}]+[\hat{f}_X^{0,0},\xi_{0,2}],
\end{aligned}
\end{equation}
with the corresponding equations for $\dd \hat{f}^{-1,-1}_X,\dd \hat{f}^{0,-2}_X$ given by changing the signs of the second indices.
Since  $f^{1,\pm3}_X$ does not appear in \eqref{3df-10} and  in the corresponding equations for $\dd \hat{f}^{-1,-1}_X,\dd \hat{f}^{0,-2}_X$, from \eqref{df} it follows that  $f^{1,\pm3}_X=0$.

To summarize, we can associate to each infinitesimal CR symmetry $X\in\mathcal{A}$ a collection of $\g$-valued functions $f^{i,j}_X:M_0\to\g_{i,j}$ for $(i,j)\in I(\g)$. Conversely, the conditions $\xi_{i,j}(\hat{X})=f^{i,j}_X$ for $(i,j)\in I(\m)$ along with $\xi_{0,0}(\hat{X})=0$ and \eqref{df} characterize the ``Taylor series" expansion of an infinitesimal CR symmetry $X\in\Gamma(TM_0)$, and it remains to show that such a symmetry exists for arbitrary values $u=f_X(o)\in\g$ at each $o\in M_0$. Once again following \cite{tanaka1} and \cite{yamag}, we let $u^{i,j}$ be $\g_{i,j}$-valued coordinates for the graph space $G^0\times\g$ and consider the exterior differential system  generated by the one-forms
\begin{align*}
&\alpha^{i,j}=\dd u^{i,j}-\sum_{\begin{smallmatrix}i_1+i_2=i\\j_1+j_2=j\end{smallmatrix}}[u^{i_1,j_1},\xi_{i_2,j_2}],
&(i_2,j_2)\in I(\m).
\end{align*}
Via the Maurer-Cartan equations, differentiating shows
\begin{align*}
\dd\alpha^{i,j}+\sum[\alpha^{i_1,j_1},\xi_{i_2,j_2}]\equiv0\mod\{\xi_{0,0}\}.
\end{align*}
Hence, around any $o\in M_0$ there is a neighborhood $U_o\subset M_0$ and a section $s:U_o\to G^0\times\g$, and pulling back $s^*\alpha^{i,j}$ to the graph space $M_0\times\g$ defines an integrable Pfaffian system by the Frobenius theorem. Integral manifolds of this system determine infinitesimal CR symmetries.

\subsection{Sketch of the proof of part (3) of Theorem \ref{maintheor}}
The proof is a standard consequence of the fact that the real part of the universal bigraded algebraic prolongation $\mathfrak U_{\text{bigrad}}(\g^0)$ contains a grading element $E$ with respect to the first weight, i.e. such that $[E,  y]=i y$ if $ y \in \Re\g_i$; e.g., \eqref{conf_alg} and \eqref{ESE} (where the grading element is actually $-\widehat{E}$). If a germ at a point $o$ of a $2$-nondegenerate CR structure with CR symbol $\g^0$ has infinitesimal symmetry algebra $\g$ of dimension $\dim\mathfrak U_{\text{bigrad}}(\g^0)$, then the canonical absolute parallelism assigned to it by part (1) of Theorem \ref{maintheor} has constant structure functions; i.e., the vector fields dual to the parallelism form a Lie algebra. This Lie algebra is isomorphic to the Lie algebra of infinitesimal symmetries.  Name the latter $\mathcal A$.
The algebra $\mathcal A$ has the following natural, decreasing filtration $\{A_\kappa\}_{\kappa \in\mathbb Z, \kappa\geq -2}$ with $\mathcal A_{-2}=\mathcal A$,
$$\mathcal A_{-1}=\{X\in\mathcal A: X(o)\in D_o\}, \quad \mathcal A_{0}=\{X\in\mathcal A: X(o)\in \Re(K_o\oplus \overline K_o)\},$$
and $\mathcal A_{\kappa}$ with $\kappa>0$ are defined recursively by
$$\mathcal A_{\kappa} = \{X\in \mathcal A_{\kappa-1} : [X, \mathcal A_{-1}] \subset  \mathcal A_ {\kappa-1} \}.$$
By construction of subsection \ref{geom_prolong_part1}, the associated graded Lie algebra is isomorphic to $\mathfrak U_{\text{bigrad}}(\g^0)$. It is well-known (see \cite[Lemma 3.3]{doubkom}) that the existence of the grading element in $\mathfrak U_{\text{bigrad}}(\g^0)$ implies $\g$ and $\mathfrak U_{\text{bigrad}}(\g^0)$ are isomorphic as filtered Lie algebras, which proves the claim.

\end{document}